\theoremstyle{plain}
\newtheorem{teo}{Theorem}[section]      
\newtheorem{prop}[teo]{Proposition}    
\newtheorem{lem}[teo]{Lemma}            
\theoremstyle{definition}               
\newtheorem{defin}[teo]{Definition}
\theoremstyle{remark}                   
\newtheorem{oss}[teo]{Remark}
\newcommand{\re}{\mathbb{R}}
\numberwithin{equation}{section}
\begin{document}

\title[Sharp energy estimates for nonlinear fractional diffusion equations]
{Sharp energy estimates for nonlinear fractional diffusion equations}

\thanks{Both authors were supported by grants
MTM2008-06349-C03-01 (Spain),  MTM2011-27739-C04-01 (Spain), and 2009SGR345 (Catalunya). 
The second author was partially supported by University of Bologna (Italy), funds for selected research topics.}

\author[Xavier Cabr\'e]{Xavier Cabr\'e}
\address{
ICREA and Departament de Matem\`atica Aplicada 1 \\
Universitat Polit\`ecnica de Catalunya, Diagonal 647 \\
08028 Barcelona (Spain) } \email{xavier.cabre@upc.edu}
\author[Eleonora Cinti]{Eleonora Cinti}
\address{Dipartimento di Matematica ``F. Casorati'', 
Universit\`a degli Studi di Pavia, Via Ferrata 1, 27100 Pavia (Italy)} \email{eleonora.cinti@unipv.it}

\begin{abstract} 
We study the nonlinear fractional equation
$(-\Delta)^su=f(u)$ in $\re^n,$
for all fractions $0<s<1$ and all nonlinearities $f$. For every fractional power $s\in (0,1)$,
we obtain sharp energy estimates for bounded global minimizers and for bounded monotone solutions.
They are sharp since they are
optimal for solutions depending only on one Euclidian variable.

As a consequence,
we deduce the one-dimensional symmetry of bounded global minimizers and of bounded
monotone solutions in dimension $n=3$ whenever $1/2\leq s<1$. This result is the analogue of a conjecture of De
Giorgi on one-dimensional symmetry for the classical equation
$-\Delta u=f(u)$ in $\re^n$. It remains open for $n=3$ and $s<1/2$, and also for $n\geq 4$ and all $s$.
\end{abstract}

\maketitle

\section{Introduction and results}

In this paper we establish energy estimates for some bounded solutions of
the fractional nonlinear equation
\begin{equation} \label{eq1-s}
(-\Delta)^{s} u=f(u)\quad \mbox{in}\;\re^n,\end{equation} for
every $0<s<1$, where $f:\re\rightarrow\re$ is a $C^{1,\gamma}$
function for some $\gamma>\max (0,1-2s)$.

In \cite{C-Cinti1}, we considered the case
$s=1/2$ and established sharp energy estimates for bounded global minimizers in
every dimension $n$, and for bounded monotone solutions in dimension
$n=3$. As a consequence, we deduced  one-dimensional (or 1-D)
symmetry for these types of solutions in dimension $n=3$.

This result about 1-D symmetry is the analogue of a conjecture of De
Giorgi for the Allen-Cahn equation $-\Delta u=u-u^3$ in $\re^n$.
More precisely, in 1978 De Giorgi conjectured that the level sets
of every bounded solution of the
Allen-Cahn equation which is monotone in one direction
must be hyperplanes, at least if
$n\leq 8$. That is, such solutions depend only on one Euclidean
variable. The conjecture was proven to be true in dimension
$n=2$ by Ghoussoub and Gui \cite{GG} and in dimension $n=3$ by
Ambrosio and the first author \cite{AC}. For $4\leq n\leq 8$, if
$\partial_{x_n}u>0$, and assuming the additional condition
$$\lim_{x_n \rightarrow \pm \infty}u(x',x_n)=\pm 1\quad \mbox{for all}\;x'\in \re^{n-1},$$ it has been
established by Savin \cite{S}. More recently, a counterexample to the
conjecture for $n\geq 9$ has been found by Del Pino, Kowalczyk,
and Wei \cite{dPKW}.

In this paper (see Theorem \ref{degiorgi-s} below), we establish the
one-dimensional symmetry of bounded global minimizers and of bounded monotone solutions of (\ref{eq1-s}) in
dimension $n=3$ for every $1/2\leq s<1$. This is the analogue of the
conjecture of De Giorgi for the operator $(-\Delta)^s$. Our result applies to all nonlinearities $f\in C^{1,\gamma}$, $\gamma > \max\{0,1-2s\}$.

In dimension $n=3$, in \cite{C-Cinti1} we proved the same result for
$s=1/2$. For $n=3$ and $0<s<1/2$ the question remains open,
as well as for  $n\geq 4$ and any $0<s<1$. For $n=2$ and $s=1/2$,
the one-dimensional symmetry of bounded stable solutions of (\ref{eq1-s})
was proven by the first author and Sol\`a-Morales
\cite{C-SM}. The same result in dimension $n=2$ for every
fractional power $0<s<1$ has been established by
the first author and Sire \cite{C-Si2} and by Sire and Valdinoci \cite{SV}. Recall that the class of stable solutions includes all global minimizers, as well as all monotone solutions.

The existence of 1-D monotone solutions (also called ``layers'') has been established by the first author and Sire in \cite{C-Si1, C-Si2} for all nonlinearities $f$ (not necessarily odd) for which its primitive (up to a sign) is a potential of ``double-well type''. In fact, \cite{C-Si1,C-Si2} establish that this is a necessary and sufficient condition on $f$ for a 1-D monotone solution to exist.
Our 1-D symmetry result for monotone solutions in $\re^3$ will use both the 1-D symmetry result in $\re^2$ of \cite{C-Si2} and (since it applies to all nonlinearities $f$) the necessary conditions on $f$ proved in \cite{C-Si1} for a monotone solution in $\re$ to exist.

As in \cite{C-Cinti1}, a crucial ingredient in the proof of 1-D
symmetry is a sharp energy estimate for bounded global minimizers and for bounded monotone solutions. 
These are Theorems \ref{energy-est-s} and \ref{energy-dim3-s}. Our estimates are sharp since they are
optimal for solutions depending only on one Euclidian variable.

The classical connection between the Allen-Cahn equation and minimal surfaces (or the perimeter
functional) is the motivation behind the conjecture of De Giorgi.
The following are some results in this direction but concerning the fractional Laplacian. 
In \cite{Mar} M.d.M.~Gonz\'alez proved that an energy functional related to fractional powers~$s$ of the 
Laplacian (for $1/2<s<1$) $\Gamma$-converges to the classical perimeter functional. 
The same result for $s=1/2$ had been proven by Alberti, Bouchitt\'e, and Seppecher in \cite{ABS}. 
In \cite{Ca-Sou} Caffarelli and Souganidis prove that scaled threshold dynamics-type algorithms 
corresponding to fractional Laplacians converge to certain moving fronts. More precisely, when $1/2\leq s<1$ the resulting 
interface moves by a weighted mean curvature, while for $0<s<1/2$ the normal velocity is nonlocal 
of fractional-type. Later, Caffarelli, Roquejoffre, and Savin \cite{CRS} set up the theory of nonlocal
$s$-minimal surfaces for $0<s<1/2$. Palatucci, Savin, and Valdinoci \cite{PSV,SV1,SV2} have established
precise relations between $s$-minimal surfaces and the interfaces of semilinear phase transition
equations driven by the fractional Laplacian. See Remark~\ref{minsurf} for more comments on
$s$-minimal surfaces.

To study the nonlocal problem (\ref{eq1-s}), we will realize it as
a local problem in $\re^{n+1}_+$ with a nonlinear Neumann
condition. More precisely, Caffarelli and Silvestre \cite{CS}
proved that $u$ is a solution of problem \eqref{eq1-s} in $\re^n$
if and only if $v$, defined on
$\re^{n+1}_+=\{(x,\lambda):x\in \re^n,\;\lambda>0\}$, is a
solution of the problem
\begin{equation}\label{eq2-s}
\begin{cases}
\mbox{div}(\lambda^{1-2s}\nabla v)=0& \text{in}\; \re_{+}^{n+1},\\
- d_{s}\lim_{\lambda\rightarrow 0}\lambda^{1-2s}\partial_{ \lambda}v=f(v)& \text{on}\; \re^{n},
\end{cases}
\end{equation} where $v(x,0)=u(x)$ on $\re^{n}=\partial \re_{+}^{n+1}$ and 
$$d_{s}=2^{2s-1}\frac{\Gamma(s)}{\Gamma(1-s)}$$ is a positive constant. 
The fact that this constant does not depend on $n$ is already shown in section 3.2 of \cite{CS}. Its precise value has been computed in several 
papers; see, e.g., \cite{FL,ST}. Using that $s\Gamma(s)=\Gamma(s+1)$ and $(1-s)\Gamma(1-s)=\Gamma(2-s)$ we deduce, respectively, that
$$\frac{d_s}{(2s)^{-1}}\rightarrow 1\:\:\mbox{as}\:\:s\rightarrow 0\quad \mbox{and}\quad \frac{d_s}{2(1-s)}\rightarrow 1\:\:\mbox{as}\:\:s\rightarrow 1.$$
Thus, $d_s$ blows-up as $s\rightarrow 0$ and $d_s$ tends linearly to zero as $s\rightarrow 1$.

In the sequel, the extension $v$ of $u$ in
$\re^{n+1}_+$ which satisfies $\mbox{div}(\lambda^{1-2s}\nabla v)=0$ will be named ``the $s$\textit{-extension} of $u$''.

Observe that for every $0<s<1$, we have that $-1<1-2s<1$ and thus
the weight $\lambda^{1-2s}$ which appears in \eqref{eq2-s} belongs
to the Muckenhoupt class $A_2$. As a consequence, the theory developed by Fabes,
Kenig, and Serapioni \cite{FKS} applies to problem \eqref{eq2-s} and thus a
Poincar\'e inequality, a Harnack inequality, and H\"{o}lder regularity
hold for solutions of our problem.
As shown by the first author and Sire \cite{C-Si1}, solutions to
\eqref{eq2-s} are $C^\beta(\overline{\re^{n+1}_+})$ for some $\beta\in (0,1)$.
But in general they have no further regularity in $\lambda$; note that $\lambda^{2s}$ (even that it is unbounded) solves
\eqref{eq2-s} with $f$ identically constant.
The trace  $u(\cdot,0)$ on $\{\lambda=0\}$ of a bounded solution of \eqref{eq2-s} is however a $C^{2,\beta}(\re^{n})$ function;
see \cite{C-Si1}. 
We will also use some gradient estimates from \cite{C-Si1} which apply to 
solutions of \eqref{eq2-s} (see Remark \ref{oss-grad} below) and  
that will be important in the proof of our energy estimates.

Problem (\ref{eq2-s}) associated to the nonlocal equation
\eqref{eq1-s} allows to introduce a notion of \textit{energy}
 and \textit{global minimality} for a solution $u$ of problem (\ref{eq1-s}).

Consider the cylinder
$$C_R=B_R \times (0,R)\subset \re^{n+1}_+,$$ where $B_R$ is the ball of radius $R$ centered at 0 in $\re^n$, 
and the energy functional
\begin{equation}\label{energia-s}
{\mathcal E}_{s,C_R}(v)=d_s\int_{C_R}\frac{1}{2}\lambda^{1-2s}|\nabla
v|^2dx d\lambda + \int_{B_R}G(v(x,0))dx, 
\end{equation} 
where $G'=-f$ and thus $G$ is defined up to an additive constant. Observe that ${\mathcal E}_{s,C_R}$ 
is the energy functional associated to problem \eqref{eq2-s}. 
We can now give the following definition. Let us denote, for $\Omega \subset \re^{n+1}_+$, the Sobolev space
$H^1(\Omega,\lambda^{1-2s})$ made of functions $v$ with $\lambda^{1-2s} (v^2+|\nabla v|^2)\in L^1(\Omega)$.

\begin{defin}\label{layer-s}
\begin{enumerate}
\item [a)] We say that a bounded $C^\beta_{\rm loc}(\overline{\re_+^{n+1}})\cap
H^1_{\rm loc}(\overline{\re_+^{n+1}},\lambda^{1-2s})$
function $v$ is a {\it global minimizer} of $(\ref{eq2-s})$ if, for
all $R>0$,
$${\mathcal E}_{s,C_R}(v)\leq {\mathcal E}_{s,C_R}(w),$$ for
every $H^1(C_R,\lambda^{1-2s})$ function $w$ such that
$v\equiv w$ in $\partial{C_R}\setminus\{\lambda=0\}$. 
\item [b)]
We say that a bounded $C^1$ function $u$ in $\re^{n}$ is a {\it
global minimizer} of $(\ref{eq1-s})$ if its $s$-extension $v$ is a
global minimizer of (\ref{eq2-s}). \item[c)] We say that a bounded
function $u$ is a \emph{layer solution} of
(\ref{eq1-s}) if $u$ is a solution, it is monotone increasing in one of the
$x$-variables, say $\partial_{x_n}u>0$ in $\re^n$, and
\begin{equation}\label{hp-limit-s}\lim_{x_n \rightarrow \pm
\infty}u(x',x_n)=\pm 1\quad \mbox{for every}\:x'\in
\re^{n-1}.\end{equation}
\end{enumerate}
\end{defin}

We recall that every layer solution is a global minimizer
(see \cite{C-Si2}). This is proved using the sliding method.
Note that the weight $\lambda^{1-2s}$ does not
depend on the horizontal variables $x_1,...,x_n$, and hence problem
\eqref{eq2-s} is invariant under translations in the directions
$x_1,...,x_n$.

Our first result is the following energy estimate for global
minimizers of problem (\ref{eq1-s}). In particular it applies also to layer solutions. Given a nonlinearity $f$ and a bounded function $u$
defined on $\re^n$, set
$$G(u)=\int_u^1 f\quad \mbox{and}$$
\begin{equation}\label{c_u-s}
c_u=\min\{G(s):\inf_{\re^n} u\leq s\leq \sup_{\re^n}
u\}.\end{equation}

\begin{teo}\label{energy-est-s}
Let $s\in (0,1)$, $f$ be any $C^{1,\gamma}$ nonlinearity with $\gamma>\max\{0,1-2s\}$, and let $u:\re^{n}\rightarrow \re$ be a bounded global minimizer of
(\ref{eq1-s}). Let $v$ be the $s$-extension of $u$ in
$\re^{n+1}_+$.

Then, for all $R>2$,
\begin{equation}\label{energy-s}
 \mathcal E_{s,C_R}(v)=d_s\int_{C_R}\frac{1}{2}\lambda^{1-2s}|\nabla v|^2dx
d\lambda+\int_{B_R}\{G(u)-c_u\}dx \leq C R^{n-2s}\int_{1/R}^1\rho^{-2s}d\rho,\end{equation}
where $c_u$ is defined by
(\ref{c_u-s}), and $C$ is a positive constant depending
only on $n$, $s$, $||f||_{C^{1,\gamma}([\inf_{\re^n}u,\sup_{\re^n}u])}$, and $||u||_{L^\infty(\re^n)}$.

As a consequence, for some constant $C$ depending on the same quantities as before, we have
\begin{equation}\label{energy-s1}
\mathcal E_{s,C_R}(v) \leq C R^{n-2s} \quad
\mbox{if}\;\;0<s<1/2,\end{equation}
\begin{equation}\label{energy-s2}
\mathcal E_{s,C_R}(v) \leq CR^{n-1}\log R \quad
\mbox{if}\;\;s=1/2,\end{equation}
\begin{equation}\label{energy-s3} \mathcal E_{s,C_R}(v) \leq C R^{n-1}\quad
\mbox{if}\;\;1/2<s<1.
\end{equation}
\end{teo}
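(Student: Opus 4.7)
The plan is to test the global minimality of $v$ against an explicit competitor $w$ on $C_R$ that agrees with $v$ on $\partial C_R\setminus\{\lambda=0\}$ and whose energy matches the right-hand side of~\eqref{energy-s}. This extends to every $s\in(0,1)$ the cut-off argument used in~\cite{C-Cinti1} for $s=1/2$, the new ingredient being careful bookkeeping of the Muckenhoupt weight $\lambda^{1-2s}$.

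Pick $\bar u\in[\inf_{\re^n} u,\sup_{\re^n} u]$ with $G(\bar u)=c_u$; after subtracting $c_u$ we may assume $G\geq 0$ and $c_u=0$. Next, build a Lipschitz cutoff $\varphi\colon\overline{\re^{n+1}_+}\to[0,1]$ satisfying: (i) $\varphi\equiv 1$ on $\partial C_R\setminus\{\lambda=0\}$; (ii) $\varphi\equiv 0$ on $B_{R-1}\times\{0\}$; and (iii) $\int_{C_R}\lambda^{1-2s}|\nabla\varphi|^2\,dx\,d\lambda\leq CR^{n-2s}\int_{1/R}^1\rho^{-2s}\,d\rho$. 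Set $w:=\bar u+\varphi(v-\bar u)$. Then $w\equiv v$ on $\partial C_R\setminus\{\lambda=0\}$ by (i), while $w(\cdot,0)\equiv\bar u$ on $B_{R-1}$ by (ii), so that $G(w(\cdot,0))$ is supported on the annular shell $B_R\setminus B_{R-1}$ and the potential part of $\mathcal E_{s,C_R}(w)$ is bounded by $CR^{n-1}$. For the weighted Dirichlet part, use
\begin{equation*}
|\nabla w|^2\leq 2\varphi^2|\nabla v|^2+2(v-\bar u)^2|\nabla\varphi|^2
\end{equation*}
together with $\|v-\bar u\|_\infty\leq 2\|u\|_\infty$: the second piece is controlled by (iii), and the first is bounded on $\operatorname{supp}\varphi$ using the pointwise gradient estimates on $v$ from~\cite{C-Si1} recalled in Remark~\ref{oss-grad}. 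Global minimality $\mathcal E_{s,C_R}(v)\leq\mathcal E_{s,C_R}(w)$ then yields~\eqref{energy-s}, since $R^{n-1}\leq CR^{n-2s}\int_{1/R}^1\rho^{-2s}\,d\rho$ in every regime. The three corollaries~\eqref{energy-s1}--\eqref{energy-s3} follow by direct evaluation of $\int_{1/R}^1\rho^{-2s}\,d\rho$ for $s<1/2$, $s=1/2$, and $1/2<s<1$.

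\textbf{Main obstacle.} The delicate step is producing the cutoff $\varphi$ with the sharp weighted-gradient bound: after the rescaling $\rho=\tau/R$, the target reads $R^{n-1}\int_1^R\tau^{-2s}\,d\tau$, with the $R^{n-1}$ factor arising from the unit-thickness lateral shell and the one-dimensional integral reflecting the optimal transition of $\varphi$ in $\lambda$. A natural ansatz is a product $\varphi(x,\lambda)=1-\eta_1(R-|x|)\eta_2(\lambda)$ with $\eta_1(0)=0$, $\eta_1\equiv 1$ on $[1,\infty)$, $\eta_2(0)=1$, $\eta_2(R)=0$, where $\eta_2$ is tuned so that $\int_0^R\lambda^{1-2s}|\eta_2'(\lambda)|^2\,d\lambda$ reproduces exactly $\int_1^R\tau^{-2s}\,d\tau$. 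Since $\int_{1/R}^1\rho^{-2s}\,d\rho$ is bounded for $s<1/2$, logarithmic at $s=1/2$, and polynomial in $R$ for $s>1/2$, the construction must respect this trichotomy; verifying the companion bound $\int_{\operatorname{supp}\varphi}\lambda^{1-2s}|\nabla v|^2$ of the same order via the Cabr\'e--Sire gradient estimates is the other point requiring attention.
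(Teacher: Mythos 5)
Your outline coincides with the paper's: test minimality against a competitor equal to a constant $\tau$ (your $\bar u$) on $B_{R-1}\times\{0\}$ and to $v$ on $\partial C_R\cap\{\lambda>0\}$, so that the potential term costs $CR^{n-1}$ and everything reduces to a sharp weighted Dirichlet (capacity) bound. However, the two steps you leave open are where the entire content of the theorem sits, and as stated they do not close. First, the term $\int_{C_R}\lambda^{1-2s}\varphi^2|\nabla v|^2$ is not controlled by (i)--(iii) together with Remark \ref{oss-grad}. Because $\varphi\equiv1$ on all of $\partial C_R\cap\{\lambda>0\}$ (top included) and $\varphi\equiv0$ on $B_{R-1}\times\{0\}$, the one-dimensional weighted Poincar\'e inequality shows that a vertical transition confined to a layer of height $h$ costs at least $c\,R^{n}h^{-2s}$, so (iii) forces ${\rm supp}\,\varphi$ to fill essentially all of $C_R$; on that set the bounds $|\nabla_xv|\le c_s$ and $|\nabla v|\le c_s/\lambda$ only give $\int\lambda^{1-2s}|\nabla v|^2\lesssim R^{n}$, a full power of $R$ above the target. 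To rescue this term you need a \emph{pointwise} decay such as $\varphi(x,\lambda)\le C(\lambda/R)^{2s}$ for $|x|\le R-1$ (which the weighted-harmonic choice of $\varphi$ does satisfy, and which then yields $R^{n-2s}$ for this term); this is an extra requirement nowhere in your list and not enjoyed by your ansatz.

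Second, the product ansatz $\varphi=1-\eta_1(R-|x|)\eta_2(\lambda)$ cannot satisfy (iii), and your bookkeeping of where $R^{n-1}$ comes from is off by a factor of $R$: the vertical derivative $\eta_1\eta_2'$ is supported on all of $B_{R-1}\times(0,R)$, not on the unit shell, so it contributes $R^{n}\int_0^R\lambda^{1-2s}|\eta_2'|^2\,d\lambda$. Making this $\le R^{n-2s}\int_{1/R}^1\rho^{-2s}d\rho$ forces $\eta_2$ to transit over the full height, essentially $\eta_2(\lambda)\approx1-(\lambda/R)^{2s}$, hence $\eta_2\approx1$ up to height $R/2$; but then the horizontal derivative $\eta_1'\eta_2$ on the unit annulus contributes $R^{n-1}\int_0^R\lambda^{1-2s}\eta_2^2\,d\lambda\approx R^{n+1-2s}$, exceeding the target by a factor between $R^{2-2s}$ and $R$. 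The correct cutoff is genuinely two-dimensional in $(|x|,\lambda)$ near the edge $\partial B_R\times\{0\}$, and building it with the sharp constant is exactly the weighted capacity estimate the paper proves by other means: $\overline w$ is taken to be the $\lambda^{1-2s}$-harmonic extension of explicit boundary data (hence of minimal weighted Dirichlet energy among all extensions of its trace), that energy is bounded by weighted fractional seminorms of the trace via Theorem \ref{extension-s}, and those seminorms are estimated by $\int_{1/R}^1\rho^{-2s}d\rho$ via Theorem \ref{key-s}, whose gradient hypotheses \eqref{grad-1A}--\eqref{grad-2M} are verified from \eqref{grad-s} on $\partial C_1\cap\{\lambda>0\}$ and from $|\nabla g|\le C\varepsilon^{-1}$ on the bottom annulus. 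Until you either prove your claim (iii) together with the needed pointwise decay of $\varphi$, or substitute an argument of the type of Theorems \ref{extension-s} and \ref{key-s}, the proof has a genuine gap.
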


For $s=1/2$, the estimate was proved in our previous paper \cite{C-Cinti1}.
For $s\in (0,1)$ they were announced in the second author Ph.D. Thesis \cite{Cinti}.
More recently the estimates 
have been proven with a different method (without using the
extension problem) by Savin and Valdinoci \cite{SV1}. 
While their proof is simpler than ours, we develope further results which are of independent interest. More precisely, an extension result (Theorem \ref{extension-s}) and a phase transition estimate (Theorem \ref{key-s}) both involving fractional Sobolev norms with weights.

\begin{oss}
The energy estimate \eqref{energy-s1} is sharp. Indeed, for every bounded solution of problem 
\eqref{eq1-s} the energy is also bounded below by $c_1R^{n-2s}$ for some constant $c_1>0$. 
This is a consequence of a monotonicity formula (Proposition \ref{mon-formula}) that we prove in section 3.

Moreover, also when $1/2\leq s<1$ the energy estimates \eqref{energy-s2} and \eqref{energy-s3} are 
sharp since they are optimal for 1-D layer solutions, in the sense that for these solutions the energy is also 
bounded below by $c_2R^{n-1}\log R$ (if $s=1/2$) and $c_2R^{n-1}$ (if $s>1/2$), for some constant $c_2>0$, when they are seen as solutions in $\re^n$. 
When $s>1/2$, this follows immediately from Fubini's theorem.
Indeed, if $v=v(x_n,\lambda)$ is a 1-D layer solution of \eqref{eq2-s} then, for $R>1$ we have (note that $G(u)-c_u\geq 0$ in $\re^n$)
\begin{eqnarray*}
{\mathcal E}_{s,C_{\sqrt{n}R}}(v) &\geq&   {\mathcal E}_{s,(-R,R)^n\times (0,R)}(v) =
d_s\int_{(-R,R)^n\times (0,R)}\frac{1}{2}\lambda^{1-2s}|\nabla v(x_n,\lambda)|^2 dx d\lambda + \\
&&\hspace{4.5cm} + \int_{(-R,R)^n}\{G(v(x_n,0))-c_u\}dx\\
&=&R^{n-1}\left\{d_s\int_0^R d\lambda \int_{-R}^R  \frac{1}{2}\lambda^{1-2s}|\nabla v|^2 dx_n  +
\int_{-R}^R\{G(v(x_n,0))-c_u\}dx_n\right\}\\
&\geq& R^{n-1}d_s\int_0^1 d\lambda \int_{-1}^1  \frac{1}{2}\lambda^{1-2s}|\nabla v(x_n,\lambda)|^2 dx_n
=CR^{n-1}.
\end{eqnarray*}
\end{oss}

In our next result we establish that in dimension $n=3$ the energy estimate \eqref{energy-s} holds also for bounded
monotone solutions without the limit assumption \eqref{hp-limit-s}.
These solutions can only be guaranteed to be minimizers among a certain class of functions (see Proposition \ref{monot-min-s}), but could fail to be global minimizers as defined before.

\begin{teo}\label{energy-dim3-s}
Let $n=3$, $f$ be any $C^{1,\gamma}$ nonlinearity with $\gamma
>\max\{0,1-2s\}$ and $u$ be a bounded solution of (\ref{eq1-s}) such that
$\partial_{e}u>0$ in $\re^3$ for some direction $e\in \re^3$,
$|e|=1$. Let $v$ be the $s$-extension of $u$ in $\re^{4}_+$. 

Then, $v$ satisfies the energy estimate \eqref{energy-s} with $n=3$.
\end{teo}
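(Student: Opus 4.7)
The plan is to deduce Theorem~\ref{energy-dim3-s} from the argument establishing Theorem~\ref{energy-est-s}, after replacing the notion of global minimality with a constrained minimality adapted to monotone solutions. By a rotation we may assume $e=e_3$, so $\partial_{x_3}u>0$ on $\re^3$. Monotonicity and boundedness yield pointwise limits
\[
\underline{u}(x') = \lim_{x_3\to -\infty} u(x',x_3), \qquad \overline{u}(x') = \lim_{x_3\to +\infty} u(x',x_3), \qquad x'\in\re^2.
\]
Using the H\"older and gradient estimates from \cite{C-Si1} applied to the translates $u(\cdot,\cdot+t)$, the convergence upgrades to $C^1_{\mathrm{loc}}$, so $\underline{u},\overline{u}$ are bounded solutions of \eqref{eq1-s} in $\re^2$. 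Being uniform limits of translates of a monotone solution, both have stable $s$-extensions.

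The key structural input from the hypothesis $n=3$ now enters: by the two-dimensional symmetry theorem of \cite{C-Si2,SV}, any bounded stable solution of \eqref{eq1-s} in $\re^2$ is one-dimensional. Hence each of $\overline{u}$ and $\underline{u}$ either is constant or depends on a single linear combination of $x_1,x_2$. In particular, either it matches the primitive inequality $G(\underline{u})=c_u=G(\overline{u})$ or it is itself a 1-D layer of the fractional problem in $\re$, for which the energy estimate \eqref{energy-s} (with $n-1$ in place of $n$) is already known.

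By (the sliding-method proof of) Proposition~\ref{monot-min-s}, the $s$-extension $v$ minimizes $\mathcal E_{s,C_R}$ in the restricted class
\[
\mathcal A = \bigl\{ w \in H^1(C_R,\lambda^{1-2s}) : w=v \text{ on } \partial C_R\setminus\{\lambda=0\},\ \underline{v}\le w\le \overline{v} \text{ in } C_R \bigr\},
\]
where $\underline{v},\overline{v}$ are the $s$-extensions of $\underline{u},\overline{u}$. Given any competitor $w$ produced by the proof of Theorem~\ref{energy-est-s} (which realizes the upper bound \eqref{energy-s}), we replace it by $\widetilde{w} := \max(\underline{v},\min(\overline{v},w))\in\mathcal A$. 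Since $\underline{v}$ and $\overline{v}$ are themselves weighted-harmonic for the operator $\mathrm{div}(\lambda^{1-2s}\nabla \cdot)$, this truncation does not increase the weighted Dirichlet integral; it preserves the boundary values on $\partial C_R\setminus\{\lambda=0\}$ because $\underline v\le v\le\overline v$ there. The boundary potential $\int_{B_R}\{G(\widetilde{w}(\cdot,0))-c_u\}$ changes only on the set where $w(\cdot,0)$ exits $[\underline u,\overline u]$, and there it is dominated by the analogous potential of $\underline u$ or $\overline u$.

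The main obstacle is controlling this last contribution, and this is precisely where the 1-D structure of $\underline{u},\overline{u}$ is essential: since each of them is a 1-D profile in $\re^2$, the integral of $G-c_u$ along its trace over $B_R$ factors and is bounded, via the one-dimensional instance of Theorem~\ref{energy-est-s} (applicable because $\underline u,\overline u$, being stable and 1-D, are global minimizers in the 1-D problem), by $C R^{n-2s}\int_{1/R}^{1}\rho^{-2s}\,d\rho$ with $n=3$. Granting this quantitative control of the truncation, the rest of the proof of Theorem~\ref{energy-est-s} transfers verbatim and yields \eqref{energy-s} in the monotone case.
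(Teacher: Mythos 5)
Your overall skeleton is the right one --- pass to the limits $\underline u,\overline u$, invoke their stability and the two-dimensional symmetry result to get that they are constant or 1-D, and use the constrained minimality of Proposition~\ref{monot-min-s} to run the comparison argument of Theorem~\ref{energy-est-s}. But the truncation step, which is where you actually try to produce an admissible competitor, has two genuine gaps.

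First, the claim that replacing $w$ by $\widetilde w=\max(\underline v,\min(\overline v,w))$ ``does not increase the weighted Dirichlet integral'' because $\underline v,\overline v$ are weighted-harmonic is not justified and is false for a general competitor. The pointwise identity $|\nabla\min(w,\overline v)|^2+|\nabla\max(w,\overline v)|^2=|\nabla w|^2+|\nabla\overline v|^2$ only gives $\int\lambda^{1-2s}|\nabla\min(w,\overline v)|^2\le\int\lambda^{1-2s}|\nabla w|^2$ if one knows $\int\lambda^{1-2s}|\nabla\overline v|^2\le\int\lambda^{1-2s}|\nabla\max(w,\overline v)|^2$; since $\max(w,\overline v)$ agrees with $\overline v$ only on $\partial C_R\cap\{\lambda>0\}$ and not on the bottom $B_R\times\{0\}$ (where $\overline v$ satisfies a nonlinear Neumann, not a Dirichlet, condition), the minimality of $\overline v$ for the Dirichlet integral with prescribed trace does not apply. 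Second, and more seriously, the potential term after truncation is not controlled. On the set where the trace is truncated you are left with $\int\{G(\underline u)-c_u\}$ (or the analogue for $\overline u$); the ``1-D instance of Theorem~\ref{energy-est-s}'' controls $\int\{G(\underline u)-c_{\underline u}\}$ with $c_{\underline u}=G(\sup\underline u)=G(\inf\underline u)$, which need not equal $c_u=\min_{[\inf u,\sup u]}G$. If $G(\sup\underline u)>c_u$, that integral grows like $R^{3}$ and destroys the estimate.

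The paper avoids both problems by never truncating: it shows the competitor $\overline w$ of Theorem~\ref{energy-est-s} \emph{already} satisfies $\underline v\le\overline w\le\overline v$. The heart of the matter --- which your proposal omits entirely --- is that $\tau$ can be chosen with $G(\tau)=c_u$ \emph{and} $\sup\underline u\le\tau\le\inf\overline u$. This uses the conclusions of Lemma~\ref{lemma-s} on $G$ (namely $G>G(m)=G(\widetilde m)$ on $(m,\widetilde m)$ and $G>G(\widetilde M)=G(M)$ on $(\widetilde M,M)$), which come from the Hamiltonian-type necessary conditions for the existence of 1-D monotone solutions in \cite{C-Si1}, not merely from the 1-D symmetry in $\re^2$. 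With such a $\tau$, the boundary datum $g=\tau\eta_R+(1-\eta_R)v(\cdot,0)$ lies between $\underline v$ and $\overline v$, the maximum principle (using that $\underline v,\overline v$ are sub/supersolutions of \eqref{pb-w-s}) propagates the ordering to $\overline w$ in all of $C_R$, and Proposition~\ref{monot-min-s} applies with no extra error terms. You need to supply this choice of $\tau$, and with it the truncation machinery becomes unnecessary.
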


In dimension $n=3$ and for every $1/2\leq s<1$, Theorems \ref{energy-est-s}
and \ref{energy-dim3-s} lead to the 1-D symmetry of bounded global minimizers
and of bounded monotone solutions of problem (\ref{eq1-s}).
For $s=1/2$ this was proved in our previous paper \cite{C-Cinti1}.

\begin{teo}\label{degiorgi-s}
Assume that $n=3$ and $1/2\leq s<1$. Let $f$ be any $C^{1,\gamma}$
nonlinearity with $\gamma>\max\{0,1-2s\}$ and $u$ be either a
bounded global minimizer of \eqref{eq1-s}, or a bounded solution
of \eqref{eq1-s} which is monotone in some direction $e\in \re^3$, $|e|=1$, i.e., it satisfies $\partial_e u >0$ in $\re^3$.

Then, $u$ depends only on one variable, i.e., there exists $a \in
\re^3$ and $g: \re \rightarrow \re $, such that $u(x)=g(a\cdot x)$
for all $x \in \re^3$. Equivalently, the level sets of $u$ are
planes.\end{teo}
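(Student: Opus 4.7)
The plan is to mimic the Ambrosio--Cabr\'e/Cabr\'e--Cinti strategy for the classical De Giorgi conjecture in dimension three, adapted to the Caffarelli--Silvestre extension with weight $\lambda^{1-2s}$. Let $v$ denote the $s$-extension of $u$ to $\re^{4}_+$, which solves \eqref{eq2-s}. In both cases considered (global minimizer, or monotone solution with $\partial_{e}u>0$) the extension $v$ is stable, i.e., the second variation of $\mathcal{E}_{s,C_R}$ at $v$ is non-negative for every $R>0$. A Sternberg--Zumbrun-type argument based on stability (or simply the choice $\varphi = \partial_e v > 0$ in the monotone case, obtained by differentiating the PDE and applying the strong maximum principle for the weighted operator) produces a positive function $\varphi \in C^{\beta}_{\rm loc}(\overline{\re^4_+})$ solving the linearized problem
\[
\mathrm{div}(\lambda^{1-2s}\nabla \varphi)=0\ \ \text{in }\re^4_+,\qquad -d_s\lim_{\lambda\to 0}\lambda^{1-2s}\partial_\lambda\varphi = f'(v|_{\lambda=0})\varphi\ \ \text{on }\re^3.
\]

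Next, for every unit vector $\eta\in \re^3$, differentiating \eqref{eq2-s} along $\eta$ shows that $w_\eta := \nabla_x v \cdot \eta$ also solves the linearized problem. Setting $\sigma_\eta := w_\eta/\varphi$ on $\re^4_+$, a direct calculation using both linearized equations yields, in the weighted weak sense,
\[
\mathrm{div}\bigl(\lambda^{1-2s}\varphi^2\nabla \sigma_\eta\bigr)=0 \quad\text{in }\re^4_+,
\]
with the boundary contributions on $\{\lambda=0\}$ (coming from the nonlinear Neumann conditions for $v$ and $\varphi$) cancelling exactly, since both are expressed through the same factor $f'(v|_{\lambda=0})$.

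Testing this identity with $\sigma_\eta\zeta^2$ for a Lipschitz cut-off $\zeta$ supported in $C_{2R}$, integration by parts gives
\[
\int_{C_R}\lambda^{1-2s}\varphi^2|\nabla \sigma_\eta|^2 \leq C \int_{C_{2R}\setminus C_R}\lambda^{1-2s}|w_\eta|^2\,|\nabla \zeta|^2.
\]
By the gradient bound of \cite{C-Si1} recalled in Remark \ref{oss-grad}, $\lambda^{1-2s}|w_\eta|^2$ is controlled pointwise by the energy density of $v$, so we may invoke Theorem \ref{energy-est-s} in the global-minimizer case and Theorem \ref{energy-dim3-s} in the monotone case. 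For $n=3$ and $1/2<s<1$ the energy is $\leq CR^{2}$; choosing the standard affine cut-off with $|\nabla \zeta|\leq C/R$ gives a bound uniform in $R$, and sending $R\to\infty$ forces $\nabla\sigma_\eta\equiv 0$. For $s=1/2$ the energy is $\leq CR^{2}\log R$, which is exactly absorbed by the Ambrosio--Cabr\'e logarithmic cut-off, producing the extra $(\log R)^{-1}$ factor needed to close the estimate. In either regime, $\sigma_\eta$ is constant on $\re^4_+$ for every $\eta$, so $\nabla_x v$ is everywhere parallel to a fixed direction $a\in \re^3$; taking the trace on $\{\lambda=0\}$ shows that $u(x)=g(a\cdot x)$ for some $g\colon\re\to\re$.

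The main obstacle is the justification of the divergence identity for $\sigma_\eta$ when $s\neq 1/2$: the Muckenhoupt weight $\lambda^{1-2s}$ degenerates or blows up on the trace hyperplane, so both the computation of $\mathrm{div}(\lambda^{1-2s}\varphi^2\nabla \sigma_\eta)$ and the cancellation of boundary terms must be carried out in a weighted weak sense, using the $C^{2,\beta}(\re^3)$ regularity of the trace and the $C^\beta$ regularity of $v$ up to $\{\lambda=0\}$ from \cite{C-Si1}, together with the weighted gradient estimates of Remark \ref{oss-grad}. The borderline case $s=1/2$ is the most delicate, as the logarithmic cut-off must exactly absorb the $\log R$ factor produced by Theorem \ref{energy-est-s} with no loss; this is precisely where the sharpness of our energy estimates becomes indispensable.
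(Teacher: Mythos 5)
Your argument is essentially the paper's proof: stability yields the positive solution $\varphi$ of the linearized problem, the quotients $\sigma_\eta=\partial_\eta v/\varphi$ satisfy $\mathrm{div}(\lambda^{1-2s}\varphi^2\nabla\sigma_\eta)=0$ with vanishing weighted Neumann data, and the energy estimates of Theorems \ref{energy-est-s} and \ref{energy-dim3-s} feed the Liouville step --- the paper simply quotes Theorem \ref{liouville-s} (and, for $s=1/2$, defers to \cite{C-Cinti1}) where you unpack the Caccioppoli/cut-off computation. One small gloss: a bound on $\int_{C_R}\lambda^{1-2s}\varphi^2|\nabla\sigma_\eta|^2$ that is merely uniform in $R$ does not by itself force $\nabla\sigma_\eta\equiv 0$; you need the standard second application of Cauchy--Schwarz, in which the finiteness of the global integral makes the contribution of the annulus where $\nabla\zeta\neq 0$ tend to zero as $R\to\infty$ --- this is precisely what is packaged inside Theorem \ref{liouville-s}.
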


\begin{oss}\label{minsurf}
In \cite{CRS} Caffarelli, Roquejoffre, and Savin introduced and 
developed a regularity theory for nonlocal minimal surfaces. 
These surfaces, defined for $0<s<1/2$, 
can be interpreted as a non-infinitesimal version of classical minimal surfaces and arise when minimizing in an appropriate way the $H^s$-norm of the indicator function. Note
that when $0<s<1/2$ the indicator functions belong to $H^s$ and the extension problem 
\eqref{eq2-s} is a well posed problem for indicator functions. The authors also proved a sharp energy 
estimate $CR^{n-2s}$ related to ours in some sense: our equation is the Allen-Cahn approximation of 
these nonlocal minimal surfaces. The flatness of any $s$-minimal surface in all $\re^n$ is
only known in dimension $n=2$ by a recent result of Savin and Valdinoci~\cite{SV3} 
(this is the analogue statement to that of the conjecture of De Giorgi).
The same statement for $s$ sufficiently close to $1/2$ and $n\leq 7$
had been proved by Caffarelli and Valdinoci \cite{C-Val}.
\end{oss}

To prove 1-D symmetry, we use a Liouville type argument which requires an appropriate estimate
for the Dirichlet energy. By a result of Moschini \cite{mosch},
if the energy estimate
$$\int_{C_R}|\nabla v|^2dx d\lambda\leq CR^2 \log R$$ 
holds (note the exponent $2$ in $R^2\log R$), then one may use such Liouville type result and deduce 
1-D symmetry in $\re^n$ for global minimizers and for solutions which are
monotone in one direction.
Now, by Theorems \ref{energy-est-s} and
\ref{energy-dim3-s}, we have that
$$\int_{C_R}\frac{1}{2}\lambda^{1-2s}|\nabla v|^2dx d\lambda \leq
CR^2\log R
$$ 
holds for $n=3$ and every $1/2\leq s<1$.
Instead, if $0<s<1/2$, the sharp estimate is
$$\int_{C_R}\frac{1}{2}\lambda^{1-2s}|\nabla v|^2dx d\lambda \leq
CR^{3-2s}.$$ Since $3-2s>2$ when $0<s<1/2$, one can not use the Liouville argument.
This is the reason why we can prove 1-D symmetry only for $n\leq 3$, and in case $n=3$ only
for $1/2\leq s<1$.

We have two different proofs of our energy estimate \eqref{energy-s}.

The first one is simple but applies only to bistable nonlinearities (such as the Allen-Cahn nonlinearity 
$f(u)=u-u^3$) and to monotone solutions satisfying the limit assumption \eqref{hp-limit-s} or the more 
general \eqref{limit-s} below. We present this very simple proof in section~2. It was found by Ambrosio and 
the first author \cite{AC} to prove the optimal energy estimate for $-\Delta u=u-u^3$ in $\re^n$. 
In this specific case of bistable nonlinearities and monotone solutions satisfying \eqref{limit-s}, 
our energy estimate is uniform as $s\uparrow 1$. On the contrary, the energy estimate for global minimizers and general $f$ 
(as stated in Theorem \ref{energy-est-s}) is not uniform as $s \uparrow 1$.
Instead, the estimate in \cite{SV1} is uniform as $s \uparrow 1$, even if not stated in that paper.

Our second proof applies in more general situations and will lead to Theorems \ref{energy-est-s} 
and \ref{energy-dim3-s}. It is based on controlling the weighted $H^1(C_R,\lambda^{1-2s})$-norm of the solution
$v$ by certain weighted fractional Sobolev norms of the trace of $v$ on $\partial C_R$.

Let us recall the definition of the $H^{s}(A)$-norm of a function for
$0<s<1$, where $A$ is either a bounded Lipschitz domain of $\re^n$, or
$A=\partial \Omega$ and $\Omega$ is a bounded Lipschitz domain of
$\re^{n+1}$. It is given by
$$||w||^2_{H^{s}{(A)}}=||w||^2_{L^2(A)}+\int_{A}\int_{A}\frac{|w(z)-w(\overline{z})|^2}{|z-\overline{z}|^{n+2s}}d\mathcal{H}^n(z)
d\mathcal{H}^n(\overline{z}),$$ 
where $d\mathcal{H}^n$ denotes the $n$-dimensional Hausdorff measure. 
In the sequel we will use it for
$\Omega=C_1=B_1\times (0,1)\subset \re^{n+1}$ and $A=\partial C_1$.

To prove Theorem \ref{energy-est-s}, we use the following comparison argument. We construct a 
comparison function $\overline w$ which takes the same values of $v$ on $\partial C_R\cap \{\lambda>0\}$ and thus, 
by minimality of $v$,
$$\mathcal E_{C_R}(v)\leq  \mathcal E_{C_R}(\overline w).$$
Then, it is enough to estimate the energy of $\overline w$.

For simplicity, consider the case of the Allen-Cahn nonlinearity.
We define the function
$\overline{w}(x,\lambda)$ in $C_R$
in the following way. First we define $\overline{w}(x,0)$
on the base of the cylinder as a smooth function $g(x)$ which is
identically equal to $1$ in $B_{R-1}$ and $g(x)=v(x,0)$ for
$|x|=R$; then we define $\overline{w}(x,\lambda)$ as the unique
solution of the Dirichlet problem
\begin{equation}\begin{cases}
\text{div}(\lambda^{1-2s}\nabla \overline{w})=0 & \mbox{in} \;C_R\\
\overline{w}(x,0)=g(x)& \mbox{on}\;B_R \times\{\lambda=0\}\\
 \overline{w}(x,\lambda)=v(x,\lambda)& \mbox{on}\;\partial
 C_R\cap\{\lambda>0\}.\end{cases}\end{equation}
Since by definition $\overline w \equiv 1$ on $B_{R-1}\times \{0\}$, then the potential energy is bounded by 
$C|B_R\setminus B_{R-1}|=CR^{n-1}$. Thus, it remains to estimate the Dirichlet energy.

To do this we proceed in two steps. First, after rescaling, we apply Theorem \ref{extension-s} below, 
to control the Dirichlet norm of $\overline{w}_1$ (where $\overline{w}_1$ is the rescaled version of $\overline w$) in $C_1$ 
by some fractional Sobolev norms of its trace on $\partial C_1$. 
Then, we use Theorem \ref{key-s} below to give an estimate of these fractional norms.

We recall that in the proof of the estimate for the Dirichlet energy for $s=1/2$
a crucial point was an extension theorem which let us control
the $H^1(C_1)$-norm of a function by the $H^{1/2}(\partial C_1)$-norm of its trace. Here
we are in a more complicated situation, since we need to control the 
weighted $H^1(C_1,\lambda^{1-2s})$-norm, with a weight which degenerates or blows-up on a subset of $\partial C_1$
(its bottom).

We introduce some notation.
Let $A$ be either a bounded Lipschitz domain in $\re^n$ or
$A=\partial \Omega$ where $\Omega$ is a bounded domain of
$\re^{n+1}$ with Lipschitz boundary. Let $M \subset A$ be
an open set (relative to $A$) with Lipschitz boundary  (relative to $A$) $\Gamma =
\partial M$.
We define the following two sets:
\begin{equation}\label{Bfrac}
B_{\rm frac}=\begin{cases}A\times A & \mbox{if}\;\; 0<s\leq1/2\\
 M \times M & \mbox{if}\;\; 1/2<s<1,
\end{cases}
\end{equation}
and
\begin{equation}\label{Bweig}
B_{\rm weig}=\begin{cases}
 (A \setminus M) \times (A \setminus M)  &  \mbox{if}\;\; 0<s\leq 1/2\\
(A \setminus M) \times A   &  \mbox{if}\;\; 1/2<s<1.

\end{cases}
\end{equation}
For every $z$, we denote $d_M(z)$
the Euclidean distance (either in $\re^n$ or in $\re^{n+1}$) from the point $z$ to the set $M$. Set
$$a:=1-2s\in (-1,1).$$

In Theorem \ref{extension-s} we establish that, for any given function $w$ defined on all $\partial \Omega$,  there exists an extension $\widetilde w$ of $w$ to $\Omega$ whose $H^1(\Omega,{d_M^a})$-norm is controlled by a combination of a
$H^s$-norm and a $H^{1/2}(\cdot,{d_M^{a}})$-norm of its trace $w$. If $h$ is a weight (that is, a nonnegative function here), 
we indicate with
$H^s(\partial\Omega,h)$ the weighted Sobolev space of functions $w$
such that $$\int_{\partial\Omega} h(z) w(z)^2 d\mathcal H^n(z)  
+ \int_{\partial \Omega}\int_{\partial \Omega}h(z)\:\frac{|w(z)-w(\overline{z})|^2}{|z-\overline{z}|^{n+2s}} d\mathcal H^n(z)
d\mathcal H^n({\overline{z}})< +\infty.$$

Later we will apply our results in the case 
$$
\Omega=C_1,\ A=\partial C_1, \ M=B_1\times \{0\}, \ \text{ and } \ h=d_M^{1-2s}(x,\lambda)={\rm dist}^{1-2s} ((x,\lambda),M)=\lambda^{1-2s}.
$$

For a general domain the result is the following.

\begin{teo}\label{extension-s}
Let $\Omega$ be a bounded domain of $\re^{n+1}$ with Lipschitz
boundary $\partial \Omega$ and $M\subset \partial \Omega$ an open subset (relative to $\partial
\Omega$) with Lipschitz boundary (relative to $\partial
\Omega$). For $z\in \re^{n+1}$, let $d_M(z)$ denote
the Euclidean distance from the point $z$ to the set $M$. Let $s\in (0,1)$.

Then, for every $w\in C(\partial \Omega)$ there exists an extension $\widetilde{w}$ of $w$ defined in $\overline\Omega$ which
belongs to $C^1( \Omega)\cap C(\overline{\Omega})$ and such that

\begin{eqnarray}\label{ext-s}
&&\hspace{-3em}\int_{\Omega}d_M(z)^{1-2s}|\nabla
\widetilde{w}|^2dz \leq \nonumber \\
&&
\leq C||w||^2_{L^2(\partial \Omega)} +C
\int\int_{B_{\rm frac}}\frac{|w(z)-w(\overline{z})|^2}{|z-\overline{z}|^{n+2s}}d\mathcal H^n(z) d\mathcal H^n({\overline{z}})\\ 
&&\hspace{1em}
 + C\int\int_{B_{\rm weig}}
d_M(z)^{1-2s}\frac{|w(z)-w(\overline{z})|^2}{|z-\overline{z}|^{n+1}}d\mathcal H^n(z)d\mathcal H^n({\overline{z}})\nonumber,
\end{eqnarray} where
$B_{\rm frac}$ and $B_{\rm weig}$ are defined, respectively, in \eqref{Bfrac} and \eqref{Bweig} with $A=\partial \Omega$, and
$C$ denotes a positive constant depending only on $\Omega$, $M$, and $s$. 
\end{teo}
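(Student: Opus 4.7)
The plan is to build $\widetilde{w}$ by a Whitney-type averaging: at an interior point $z\in\Omega$, $\widetilde{w}(z)$ is an average of $w$ over a piece of $\partial\Omega$ of diameter comparable to $\text{dist}(z,\partial\Omega)$, so that pointwise gradient estimates directly produce fractional differences of $w$ at the natural scale. Using a finite partition of unity subordinate to a cover of $\overline{\Omega}$, I reduce to two model situations: charts that stay at positive distance from $\overline{M}\cap\partial\Omega$, and charts meeting the interface $\partial M$. In the former case, the weight $d_M^{1-2s}$ is bounded between positive constants and the classical $H^{1/2}$ trace/extension theorem yields an extension whose Dirichlet energy is controlled by an unweighted $H^{1/2}$-seminorm of $w$ over that chart; this contribution is absorbed in the weighted term on $B_{\rm weig}$ (the weight being harmless there) and, if $s\le 1/2$, also in $B_{\rm frac}=A\times A$. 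The real work is in a chart meeting $\partial M$, which I straighten so that locally $\Omega\cong B_1^+\subset\re^{n+1}_+$, $M\cong B_1\times\{0\}$, and $d_M(x,\lambda)\sim \lambda$.

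In the straightened model I set
\[
\widetilde{w}(x,\lambda)=\int_{\partial B_1^+} w(z)\,\eta_\lambda(z;(x,\lambda))\,d\mathcal H^n(z),
\]
where $\eta_\lambda(\,\cdot\,;(x,\lambda))$ is a smooth approximate identity on $\partial B_1^+$ of size $\sim\lambda$ centered near the normal projection of $(x,\lambda)$ to $\partial B_1^+$ (on $M$ when $\lambda\lesssim 1/2$, on the lateral part otherwise, with a smooth interpolation). Using $\int \nabla\eta_\lambda\,d\mathcal H^n=0$ and symmetrizing produces the pointwise bound
\[
|\nabla \widetilde{w}(x,\lambda)|^2 \le \frac{C}{\lambda^{2(n+1)}}\iint_{\Sigma(x,\lambda)\times\Sigma(x,\lambda)}|w(z)-w(\bar z)|^2\,d\mathcal H^n(z)\,d\mathcal H^n(\bar z),
\]
where $\Sigma(x,\lambda)\subset\partial B_1^+$ is a surface disc of radius $\sim\lambda$ about the projection. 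Multiplying by $\lambda^{1-2s}$, integrating in $(x,\lambda)$, and applying Fubini, the key step is to evaluate the kernel $K(z,\bar z)=\int_{\lambda\sim|z-\bar z|}\lambda^{1-2s-2(n+1)}\cdot|z-\bar z|^{n+1}\,d\lambda$: one obtains $|z-\bar z|^{-(n+2s)}$ when $z,\bar z$ both lie near $M$, and, after pulling out $d_M(z)^{1-2s}$, $|z-\bar z|^{-(n+1)}$ when at least one of them is away from $M$.

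The case split $s\le 1/2$ vs.\ $s>1/2$ arises naturally from the direction of the pointwise comparison between $\lambda^{1-2s}$ and $d_M(z)^{1-2s}$ for a boundary point $z$ seen by the averaging. When $1-2s\ge 0$ the interior weight dominates the boundary weights, so only $(A\setminus M)\times(A\setminus M)$ can be treated by the weighted kernel and the unweighted fractional kernel must be used on all of $A\times A$, yielding $B_{\rm frac}=A\times A$; when $1-2s<0$ the opposite inequality holds, which forces $B_{\rm frac}=M\times M$ but allows the weighted kernel to cover $(A\setminus M)\times A$. I expect the main obstacle to be the gluing of the local constructions via the partition of unity: the cutoff commutators $[\nabla,\chi]\widetilde{w}$ produce lower-order terms that must be absorbed into $\|w\|^2_{L^2(\partial\Omega)}$ through a weighted Hardy-type inequality for the $A_2$-weight $d_M^{1-2s}$, and the bookkeeping of the transition region near $\partial M$ is precisely what the case-dependent definitions of $B_{\rm frac}$ and $B_{\rm weig}$ are tailored to handle.
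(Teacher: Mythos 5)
Your construction is essentially the paper's: a convolution/Whitney extension at scale comparable to the distance to the boundary, the pointwise gradient bound by the oscillation of $w$ over a surface disc of that radius, Fubini producing the kernels $|z-\overline{z}|^{-(n+2s)}$ and $d_M(z)^{1-2s}|z-\overline{z}|^{-(n+1)}$, the case split according to the sign of $1-2s$ via sub/super-additivity of $t\mapsto t^{1-2s}$, and chart localization with the cutoff errors absorbed into $\|w\|^2_{L^2(\partial\Omega)}$. One caveat: in a chart meeting $\Gamma=\partial M$ the correct straightened model is $M\cong\{x_n<0\}$ with $d_M(x,\lambda)\sim\left((x_n)_+^2+\lambda^2\right)^{1/2}$, not $M\cong B_1\times\{0\}$ with $d_M\sim\lambda$ as you wrote (the latter comparison is false on the side $x_n>0$, $\lambda\ll x_n$ and is exactly where the weighted term on $B_{\rm weig}$ originates) --- but your subsequent pointwise comparison of $\lambda^{1-2s}$ with $d_M(z)^{1-2s}$ for boundary points seen by the averaging is precisely what this model requires, so the slip does not propagate into the argument.
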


We have used the notations $B_{\rm frac}$ and $B_{\rm weig}$ to indicate, respectively, the set in which we compute the $H^s$-norm of $w$ and the set in which we compute the weighted $H^{1/2}(\cdot,{d_M^a})$-norm of $w$.
\begin{oss} \label{s-ext}
We denote by $\overline w$ the $s$-extension of $w$ in $\Omega$. Since $\overline w$ is the extension of $w$ in $\Omega$ which minimizes the quantity
$$\int_{\Omega}d_M(z)^{1-2s}|\nabla \widetilde w|^2dz,$$
then inequality \eqref{ext-s} holds, in particular, with $\widetilde w$ replaced by $\overline w$.
\end{oss}

In two articles \cite{N2, N1}, Nekvinda treated some extension and trace problems for functions belonging to 
fractional Sobolev spaces, but his results are not applicable in our situation.
In \cite{N1}, the author proved an extension theorem for functions belonging to $H^s(M)$, where $M$ is, 
as before, a subset of $\partial \Omega$. More precisely he proved that if $w\in H^s(M)$ then there 
exists an extension $\widetilde w$ of $w$ in $\Omega$ such that
$$\int_{\Omega}d_M (z)^{1-2s}|\nabla \widetilde w|^2dz \leq C||w||^2_{H^s(M)}.$$
Notice that this theorem gives an extension for a function defined only on $M$. There is no control
on the extension near $\partial\Omega\setminus M$ ---a control that we require. Instead,  
in \cite{N2}, he considered the case of a function $w$ defined on $\partial \Omega \setminus M$ and 
established that there exists an extension $\widetilde w$ of $w$ in $\Omega$ with $H^1(\cdot,{d_M^a})$-norm 
controlled by some weighted fractional norm of $w$ in $\partial \Omega \setminus M$.
In our situation, we need an extension result to all of $\Omega$ for functions $w$ defined on all of $\partial \Omega$.

We conclude giving a key result in the proof of Theorem
\ref{energy-est-s} on energy estimates for minimizers. It will give control on the $H^s$ double integrals above on
$A:=\partial\Omega$ knowing the following assumptions on the function $w$ defined on $A$.  

Let $A$, $M\subset A$, $\Gamma =\partial M$, $B_{\rm frac}$, and $B_{\rm weig}$ be as in \eqref{Bfrac} and \eqref{Bweig}. 
Let $D$ denote all tangential derivatives to $A$ and $d_{\Gamma}(z)$ denote the Euclidean distance from the point
$z$ to the
set $\Gamma$ (either in $\re^n$ or in $\re^{n+1}$). 
Note that here we deal with the distance $d_\Gamma=d_{\partial M}$ to $\Gamma=\partial M$, in contrast with the distance $d_M$ to $M$ appearing  in the weighted energies \eqref{ext-s} above and \eqref{s} below.

In what follows we will assume that, for some constant $c_s$, $w$ satisfies these conditions:
\begin{itemize}
\item for  $s\in (0,1/2]$,
\begin{equation}\label{grad-1A}
|Dw(z)|\leq \begin{cases}
\displaystyle\frac{c_s}{\varepsilon}\left(\frac{d_\Gamma(z)}{\varepsilon}\right)^{2s-1} & \mbox{if} \; z\in A\;\;\mbox{and}\;\;d_\Gamma(z)\leq\varepsilon\\
\displaystyle\frac{c_s}{d_\Gamma(z)}& \mbox{if} \; z\in A\;\;\mbox{and}\;\;d_\Gamma(z)>\varepsilon;\end{cases}\end{equation}
\item for  $s\in (1/2,1)$,
\begin{equation}\label{grad-2M}
|Dw(z)|\leq \begin{cases}
\displaystyle\frac{c_s}{\varepsilon} & \mbox{if} \; z\in A\;\;\mbox{and}\;\;d_\Gamma(z)\leq\varepsilon\\
\displaystyle\frac{c_s}{d_\Gamma(z)}& \mbox{if} \; z\in A\;\;\mbox{and}\;\;d_\Gamma(z)>\varepsilon.\end{cases}\end{equation}
\end{itemize}

Later we will use this result with 
$$
A=\partial C_1,\ M=B_1\times \{\lambda=0\}, \ \textrm{ and } \Gamma=\partial B_1 \times
\{\lambda=0\}.
$$

In more general geometries the result is the following. 

\begin{teo}\label{key-s}
Let $A$ be either a bounded Lipschitz domain in $\re^n$ or
$A=\partial \Omega$ where $\Omega$ is a bounded domain of
$\re^{n+1}$ with Lipschitz boundary. Let $M \subset A$ be
an open set (relative to $A$) with Lipschitz boundary (relative to $A$) $\Gamma =
\partial M$.  Let $\varepsilon\in (0,1/2)$ and $s \in (0,1)$.

Suppose that, for some constant $c_s$, $w:A \rightarrow \re$ is a Lipschitz function such
that
\begin{equation}\label{bound_w-s}|w(z)|\leq c_s\end{equation} 
and that $w$ satisfies \eqref{grad-1A} and \eqref{grad-2M} for almost every $z\in A$.

Then,
\begin{equation}\label{s}
\begin{split}
&
\Psi_s(w):=||w||^2_{L^2(A)}+\int\int_{B_{\rm frac}}\frac{|w(z)-w(\overline
z)|^2}{|z-\overline z|^{n+2s}}d\mathcal H^n(z) d\mathcal H^n({\overline z}) \\ 
&\hspace{.1cm}+
\int\int_{B_{\rm weig}}d_M(z)^{1-2s}
\frac{|w(z)-w(\overline z)|^2}{|z-\overline z|^{n+1}}d\mathcal H^n(z)
d\mathcal H^n({\overline z})
  \leq C\int_{\varepsilon}^1 \rho^{-2s}d\rho,
\end{split}
\end{equation}
where the sets $B_{\rm frac}$ and $B_{\rm weig}$ are defined in \eqref{Bfrac} and \eqref{Bweig}, and 
$C$ denotes a positive constant depending on
$A$, $M$, $n$, $s$, and $c_s$.

As a consequence, for some constant $C$ depending on the same quantities as before, we have
\begin{eqnarray*}
\Psi_s(w)
\leq  \begin{cases} C & \mbox{if}\;\;0<s<1/2, \\
C |\log \varepsilon| & \mbox{if}\;\;s=1/2, \\
                           \displaystyle C \varepsilon^{1-2s} & \mbox{if}\;\;1/2<s<1.
                            \end{cases}
\end{eqnarray*}
\end{teo}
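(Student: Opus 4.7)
The plan is to bound the three summands of $\Psi_s(w)$ separately using only the pointwise data \eqref{bound_w-s}--\eqref{grad-2M}. The $L^2$ piece is immediate from $|w|\le c_s$ and $\mathcal H^n(A)<\infty$. For the two double integrals the key idea is a dyadic decomposition of $A$ according to the distance to $\Gamma$: I would introduce the layers
$A_k=\{z\in A:2^{-k-1}< d_\Gamma(z)\le 2^{-k}\}$ for $k=0,1,\dots,K$ with $2^{-K}\approx\varepsilon$, together with a ``core'' $A_{\rm core}=\{d_\Gamma\le\varepsilon\}$. The Lipschitz structure of $\Gamma$ inside $A$ gives $\mathcal H^n(A_k)\le C\, 2^{-k}$ and $\mathcal H^n(A_{\rm core})\le C\varepsilon$, and in both regimes of the gradient bound we have $|Dw|\le C\, 2^k$ on $A_k$ for $k\le K$, while on the core $|Dw|\le C/\varepsilon$ when $s>1/2$ and $|Dw|\le C\,\varepsilon^{-2s} d_\Gamma^{2s-1}$ when $s\le 1/2$.

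For the $H^s$-type double integral on $B_{\rm frac}$ I would split pairs $(z,\bar z)$ according to the layers they belong to. For pairs inside a single layer $A_k$ with $|z-\bar z|\lesssim 2^{-k}$, the Lipschitz estimate $|w(z)-w(\bar z)|\le C\, 2^k |z-\bar z|$ inserted into the kernel $|z-\bar z|^{-n-2s}$ gives a contribution of order $2^{k(2s-1)}$ per layer; for off-diagonal pairs (distinct layers, or $|z-\bar z|$ much larger than the layer width) the $L^\infty$ bound and the tail integrability of the kernel produce a convergent geometric series of the same order. Summing over $k=0,\dots,K$ yields a quantity comparable to $\int_\varepsilon^1\rho^{-2s}\,d\rho$, and the contribution of the core, handled with its sharper gradient profile, fits into the same integral. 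The weighted integral on $B_{\rm weig}$ is treated analogously, exploiting $d_M\approx d_\Gamma$ on $A\setminus M$: the factor $d_M(z)^{1-2s}$ converts the otherwise non-integrable kernel $|z-\bar z|^{-n-1}$ into something that, layer by layer, behaves like the $s$-fractional kernel, again producing the bound $C\int_\varepsilon^1\rho^{-2s}\,d\rho$. The three consequences ($s<1/2$, $s=1/2$, $s>1/2$) then follow from a direct computation of this integral.

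The main technical obstacle I foresee is the case $s>1/2$ of the weighted term. Here $1-2s<0$, so $d_M^{1-2s}$ blows up near $\Gamma$, and $B_{\rm weig}=(A\setminus M)\times A$ allows $\bar z$ to lie inside $M$, where the other natural singularity $|z-\bar z|^{-n-1}$ is most harmful. One must balance the weight singularity against the factor $|w(z)-w(\bar z)|^2$, which near $\Gamma$ obeys the uniform Lipschitz bound $|Dw|\le C/\varepsilon$, and verify that the layer-by-layer bookkeeping of exponents still collapses everything to the single quantity $\int_\varepsilon^1\rho^{-2s}\,d\rho$. A milder but parallel difficulty arises for $s\le 1/2$ at the inner boundary $\{d_\Gamma=\varepsilon\}$ of the core, where the two regimes of the gradient bound in \eqref{grad-1A} must be matched up without losing powers of $\varepsilon$.
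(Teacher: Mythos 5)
Your plan is essentially the paper's own argument: the proof in the paper also splits each double integral into near pairs ($|z-\bar z|\lesssim d_\Gamma(z)$), handled by the mean-value inequality and the gradient bounds \eqref{grad-1A}--\eqref{grad-2M}, and far pairs, handled by $|w|\le c_s$ and the tail of the kernel, together with the observation that the weighted kernel $d_M(z)^{1-2s}|z-\bar z|^{-n-1}$ and the $H^s$ kernel dominate each other on complementary regions; your dyadic layers $A_k$ are just a discretized version of the paper's continuous integration in $d_\Gamma$, and your "obstacle" for $s>1/2$ is resolved there exactly by the three-way splitting $\widetilde I_1+\widetilde I_2+\widetilde I_3$ you describe. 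The only ingredient you gloss over that the paper spells out is the reduction from general $A$ to the flat model via a finite bilipschitz atlas (with $\varepsilon$ replaced by $L\varepsilon$), but this is routine and your exponent bookkeeping is correct.
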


In our case ($A=\partial C_1$,
$M=B_1\times \{\lambda=0\}$, $\Gamma=\partial B_1 \times
\{\lambda=0\}$), the constant $C$ in
\eqref{s} only depends on $n$, $s$, and $c_s$.

\begin{oss}\label{oss-grad} 
In the proof of Theorem \ref{energy-est-s} the following gradient estimates for every bounded solution $v$ of 
problem \eqref{eq2-s} will be of utmost importance. Let $f\in C^{1,\gamma}$ for some $\gamma>\max\{0,1-2s\}$.
Then, every bounded solution $v$ of \eqref{eq2-s} satisfies, for some constant $c_s$,
\begin{equation}\label{grad-s}
\begin{cases}
\displaystyle |\nabla_x v(x,\lambda)|\leq c_s &\mbox{for
every}\;\;x \in \re^{n}\;\;\mbox{and}\;\;\lambda\geq 0;\vspace{0.6em}\\
\displaystyle |\nabla v(x,\lambda)|\leq c_s/\lambda &\mbox{for
every}\;\;x \in \re^{n}\;\;\mbox{and}\;\;\lambda> 0;\vspace{0.6em}\\
\displaystyle |\lambda^{1-2s}\partial_{\lambda}v|\leq c_s & \mbox{for
every}\;\;x\in\re^n\;\;\mbox{and}\;\;\lambda>0.\end{cases}\end{equation}
For the bound $|\nabla_x v(x,0)|\leq c_s$ for $x\in \re^n$, see Silvestre \cite{Sil}, Lemmas 2.8 and 2.9 in  \cite{Sil}.
In this inequality the constant $c_s$ is uniformly bounded for $s$ away from zero, but not as $s\rightarrow 0$. 
For this reason the constant $C$ in our energy estimate \eqref{energy-s} is not uniform for $s$ close to zero.
Using the maximum principle we can extend the bound $|\nabla_x v(x,0)|\leq c_s$ to every $\lambda >0$ and deduce 
$|\nabla_x v(x,\lambda)|\leq c_s$ for every $x\in \re^n$ and $\lambda \geq 0$; see Proposition 4.6 of \cite{C-Si1}.

The bound $|\nabla v(x,\lambda)|\leq c_s /\lambda$ for every $x\in \re^n$ and $\lambda>0$ follows, after rescaling, 
by interior elliptic estimates, since equation \eqref{eq2-s} is uniformly elliptic for $\lambda>0$; see Proposition 4.6 of \cite{C-Si1}. In this bound the constant $c_s$ is also bounded as $s\uparrow 1$.

Finally, the last bound $|\lambda^{1-2s}\partial_\lambda v(x,\lambda)|\leq c_s$ for every $x\in \re^n$ and $\lambda \geq 0$ 
is established using that the function $\widetilde v=\lambda^{1-2s}\partial_{\lambda}v$ satisfies the dual problem 
(with Dirichlet boundary condition)
$$\begin{cases}
   \mbox{div}(\lambda^{2s-1}\nabla \widetilde v)=0 &\mbox{in}\:\:\re^{n+1}_+\\
\displaystyle\widetilde v=-\frac{f(u)}{d_s} & \mbox{on}\:\:\partial\re^{n+1}_+;
  \end{cases}
$$
see also Proposition 4.6 of \cite{C-Si1}. In this last gradient estimate, the constant $c_s$ is uniformly bounded for $s$ away from $1$ but not as $s\rightarrow 1$ (since $d_s/(1-s)\rightarrow 1$ as $s\rightarrow 1$). For this reason, the constant $C$ in our energy estimate \eqref{energy-s} blows up for $s$ close to $1$.
\end{oss}

The paper is organized as follows:
\begin{itemize}
\item In section 2 we prove the energy estimate for layer
solutions of bistable type equations, using a simple argument
introduced by Ambrosio and the first author \cite{AC}.
\item In section 3 we establish a monotonicity formula for the energy functional associated to problem \eqref{eq2-s}.
\item In section 4 we give the proof of the extension Theorem \ref{extension-s}
 and of the key Theorem \ref{key-s}.
\item In section 5 we prove Theorem \ref{energy-est-s}.
\item In section 6 we establish energy estimates for bounded monotone solutions
in $\re^3$ (Theorem \ref{energy-dim3-s}).
\item In section 7 we prove the 1-D symmetry result, that is
Theorem \ref{degiorgi-s}.
\end{itemize}

\section{Energy estimate for monotone solutions of bistable equations}

In this section we consider potentials $G(u)=\int_u^1f$ satisfying
the following hypothesis: \begin{equation}\label{h2-s}G\geq 0=G(\pm
1) \quad \mbox{in}\; \re \quad \mbox{and}\;\; G>0 \quad \mbox{in}\;
(-1,1).\end{equation} An example is
$G(u)=\frac{1}{4}(1-u^2)^2$. In this case the nonlinearity is given by
$f(u)=u-u^3$. In \cite{C-Si1, C-Si2} it is proved the existence of a 1-D layer solution for every nonlinearity satisfying \eqref{h2-s}.

In the sequel we consider the energy
$$\mathcal{E}_{s,C_R}(v)=d_s\int_{C_R}\frac{1}{2}\lambda^{1-2s}|\nabla v|^2dx d\lambda
+ \int_{B_R}G(v(x,0))dx.$$

In the following theorem we establish energy estimates for monotone solutions of \eqref{eq1-s} such that $\lim_{x_n\rightarrow
+\infty}u(x',x_n)=1$ for all $x'\in \re^{n-1}$, assuming that the potential $G$ satisfies \eqref{h2-s}. We point out that here the constant appearing in our energy estimate is uniformly bounded as $s\uparrow 1$ (but not as $s\downarrow 0$). Recall that we have defined the cylinder $C_R=B_R\times (0,R)$, where $B_R$ is the ball centered at $0$ and of radius $R$ in $\re^n$.

\begin{teo}\label{energy-layer-s}
Let $f$ be a $C^{1,\gamma}$ function, with $\gamma >\max\{0,1-2s\}$. Suppose that $G(u)=\int_u^1f$
satisfies \eqref{h2-s}. Let $u$ be a solution of problem
\eqref{eq1-s} in $\re^n$, with $|u|<1$, and let $v$ be the $s$-extension
of $u$ in $\re^{n+1}_+$.

Given any $s_0 \in (0, 1/2)$, assume that $s_0<s<1$.

Suppose that
\begin{equation}\label{hp-monot-s}u_{x_n}>0 \;\;\mbox{in}\;\; \re^n
\end{equation} and
\begin{equation}\label{limit-s}
 \lim_{x_n\rightarrow
+\infty}u(x',x_n)=1\;\;\mbox{for all}\:\;x'\in
\re^{n-1}.\end{equation}

Then, $v$ satisfies the energy estimate \eqref{energy-s} for every $R>2$, where the constant $C$ depends only on $n$, $s_0$, and $||f||_{C^{1,\gamma}([-1,1])}$. 
In particular, for some constant $C$ depending on these three quantities, we have
\begin{equation}
\mathcal E_{s,C_R}(v) \leq \frac{ C}{1-2s} R^{3-2s} \quad
\mbox{if}\;\;s_0<s<1/2\end{equation}
\begin{equation}
\mathcal E_{s,C_R}(v) \leq CR^{2}\log R \quad
\mbox{if}\;\;s=1/2\end{equation}
\begin{equation}
\mathcal E_{s,C_R}(v) \leq \frac{C}{2s-1} R^{2}\quad
\mbox{if}\;\;1/2<s<1.
\end{equation}

\end{teo}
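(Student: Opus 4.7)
The approach is the sliding (or translation) argument of Ambrosio and the first author \cite{AC}, adapted here to the extension formulation \eqref{eq2-s}. Consider the translates $v^t(x,\lambda) := v(x + te_n, \lambda)$ for $t \geq 0$. By the monotonicity \eqref{hp-monot-s}, the one-sided limit \eqref{limit-s}, and propagation of the latter into $\{\lambda>0\}$ via translation invariance of the weighted equation together with the Harnack inequality, $v^t \to 1$ uniformly on $\overline{C_R}$ as $t \to +\infty$. Combined with the gradient estimates of Remark \ref{oss-grad} (which force $\nabla v^t \to 0$ uniformly on $\overline{C_R}$) and $G(1)=0$ from \eqref{h2-s}, this yields $\mathcal E_{s, C_R}(v^t) \to 0$. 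Hence
$$\mathcal E_{s, C_R}(v) \;=\; -\int_0^{+\infty} \frac{d}{dt}\mathcal E_{s, C_R}(v^t)\, dt.$$

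Next, I compute $\frac{d}{dt}\mathcal E_{s,C_R}(v^t)$. Since $v^t$ solves $\mbox{div}(\lambda^{1-2s}\nabla v^t)=0$ in $C_R$, integration by parts of the Dirichlet piece leaves only boundary terms. The contribution on the bottom face $B_R\times\{0\}$, namely $\int_{B_R}f(v^t(x,0))\,\partial_t v^t(x,0)\,dx$ by the Neumann condition $-d_s\lim_{\lambda\to 0}\lambda^{1-2s}\partial_\lambda v^t = f(v^t(\cdot,0))$, exactly cancels the derivative of the potential term $\int_{B_R}G(v^t(x,0))\,dx$ (recall $G'=-f$ and $\partial_t v^t = \partial_{x_n}v^t$). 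We obtain the pointwise-in-$t$ identity
$$\frac{d}{dt}\mathcal E_{s,C_R}(v^t) \;=\; d_s\int_{\partial^+ C_R} \lambda^{1-2s}(\partial_\nu v^t)(\partial_t v^t)\, d\mathcal H^n,$$
where $\partial^+ C_R := (\partial B_R \times [0,R])\cup(B_R\times\{R\})$ is the lateral plus top boundary.

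The third step is to bound this boundary integral. Using Remark \ref{oss-grad}, on the lateral face $|\partial_\nu v^t|\leq|\nabla_x v^t|\leq c_s$, while on the top face $\{\lambda=R\}$ one has $\lambda^{1-2s}|\partial_\nu v^t|=|\lambda^{1-2s}\partial_\lambda v^t|\leq c_s$. Since $\partial_t v^t>0$ by monotonicity and $\int_0^{+\infty}\partial_t v^t(x,\lambda)\, dt = 1 - v(x,\lambda)\in[0,2]$ (using only the one-sided limit at $+\infty$), Fubini reduces the task to bounding weighted integrals of $1-v$ on the two faces. Combining these with the $\lambda$-integration $\int_0^R \lambda^{1-2s}\,d\lambda$ and, for the sharp rate, the extra identity $\int_{-\infty}^{+\infty}|\partial_{\xi_n}v(x',\xi_n,\lambda)|\, d\xi_n\leq 2$ coming from monotonicity and boundedness in $x_n$, one arrives at the three rates $CR^{n-2s}/(1-2s)$ for $s<1/2$, $CR^{n-1}\log R$ for $s=1/2$, and $CR^{n-1}/(2s-1)$ for $s>1/2$, which is exactly the content of \eqref{energy-s} written equivalently as $CR^{n-1}\int_1^R \lambda^{-2s}\,d\lambda$.

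The main obstacle is justifying the uniform convergence $v^t \to 1$ on all of $\overline{C_R}$ starting from the pointwise limit \eqref{limit-s} at $\lambda=0$: monotonicity first upgrades \eqref{limit-s} to uniform convergence there, and translation invariance of \eqref{eq2-s} together with Harnack propagates it into $\{\lambda>0\}$. A secondary subtlety is keeping the constant uniform as $s\uparrow 1$: the pointwise bound $|\lambda^{1-2s}\partial_\lambda v|\leq c_s$ degenerates as $s\to 1$, so rather than use it pointwise on the top face, one exploits the integrated identity $\int_{\re}|\partial_{x_n}v|\,d\xi_n\leq 2$, which preserves a constant depending only on $s_0$ and $\|f\|_{C^{1,\gamma}([-1,1])}$, and not on the upper endpoint of $s$.
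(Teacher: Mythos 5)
Your overall strategy is exactly the one the paper uses (the Ambrosio--Cabr\'e sliding argument applied to the translates $v^t$, the cancellation of the bottom boundary term against the derivative of the potential, and Fubini in $t$ using $\int_0^{+\infty}\partial_t v^t\,dt=1-v\le 2$). However, the quantitative step where you bound the boundary integral has a genuine gap: the gradient bounds you invoke are too crude to produce the sharp rates. On the lateral face you use only $|\partial_\nu v^t|\le |\nabla_x v^t|\le c_s$; combined with $\int_0^{+\infty}\partial_t v^t\,dt\le 2$ and $\int_0^R\lambda^{1-2s}\,d\lambda$, this gives a lateral contribution of order $d_s R^{n-1}R^{2-2s}=d_sR^{n+1-2s}$, which exceeds the claimed bound by a full factor of $R$ in every regime of $s$. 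Similarly, on the top face the bound $|\lambda^{1-2s}\partial_\lambda v^t|\le c_s$ yields only $O(d_sR^{n})$, again too large (and, as you note, with a constant that degenerates as $s\uparrow 1$). The ``extra identity'' $\int_{\re}|\partial_{\xi_n}v|\,d\xi_n\le 2$ cannot rescue either term: it is the same total-variation bound along lines in the $e_n$ direction that the $t$-Fubini already exploits, so it cannot be used a second time multiplicatively and provides no decay in $\lambda$.

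The missing ingredient is the interior gradient estimate $|\nabla v^t(x,\lambda)|\le c_s/\lambda$ from Remark \ref{oss-grad} (whose constant is bounded as $s\uparrow 1$), used \emph{in combination with} $|\nabla_x v^t|\le c_s$. On the lateral face $\partial_\nu$ is a horizontal derivative, so $|\partial_\nu v^t|\le c_s\min\{1,1/\lambda\}\le C/(1+\lambda)$, and the lateral contribution becomes
$Cd_sR^{n-1}\int_0^R\frac{\lambda^{1-2s}}{1+\lambda}\,d\lambda\le Cd_sR^{n-1}\bigl(\tfrac{1}{2(1-s)}+\int_1^R\lambda^{-2s}\,d\lambda\bigr)$,
which is exactly the sharp rate once one uses $d_s\approx 2(1-s)$ as $s\uparrow 1$ to absorb the $\tfrac{1}{2(1-s)}$ (this, not the integrated identity, is what keeps the constant uniform up to $s=1$). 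On the top face one should use $|\partial_\lambda v^t|\le c_s/R$, so that $\lambda^{1-2s}|\partial_\lambda v^t|\le c_sR^{-2s}$ there, giving a contribution $O(d_sR^{n-2s})$. With these replacements your argument closes and coincides with the paper's proof; as written, it does not reach the stated estimates.
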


\begin{proof}
As in \cite{C-Cinti1}, the proof uses an argument
found by Ambrosio and the first author \cite{AC} to prove an energy estimate for
layer solutions of the analogue problem $-\Delta u=f(u)$. This method is
based on sliding the function $v$ in the direction $x_n$.
Consider the function
$$v^t(x,\lambda):=v(x',x_n+t,\lambda)$$ defined for $(x,\lambda)=(x',x_n,\lambda)\in
\re^{n+1}_+$ and $t\in \re$. For each $t$ we have
\begin{equation}
\begin{cases}
\text{div}(\lambda^{1-2s}\nabla v^t)=0& \text{in}\; \re_{+}^{n+1},\\
-d_s\lambda^{1-2s}\partial_{\lambda}v^t=f(v^t)& \text{on}\;
\re^{n}=\partial \re_{+}^{n+1}.
\end{cases}
\end{equation}
Now we
use the first two gradient estimates in \eqref{grad-s} for the solution $v$ of problem \eqref{eq2-s}. We have that for every $t$, $|v^t|<1$ and
\begin{equation}\label{grad-x}
|\nabla_x v^t(x,\lambda)|\leq c_s\:\:\mbox{for
every}\;\;x\in\re^n\;\;\mbox{and}\;\;\lambda\geq 0,\end{equation}
\begin{equation}\label{grad-lambda>1}
|\nabla v^t(x,\lambda)|\leq \frac{c_s}{\lambda}\:\:
\mbox{for every}\;\;x\in\re^n\;\;\mbox{and}\;\;\lambda>0, \end{equation}
 where the constant $c_s$ is uniformly bounded for $s$ away from $0$ (see Remark \ref{oss-grad}).
 In fact, $c_s$ depends only on $n$, $s_0$, and $||f||_{C^{1,\gamma}([-1,1])}$ (see Proposition 4.6 of \cite{C-Si1}).
In addition (see Lemma 4.8 of \cite{C-Si1}) 
\begin{equation}\label{limv-s}\lim_{t\rightarrow +\infty} \left\{|v^t(x,\lambda)-1|+|\nabla v^t(x,\lambda)|\right\}=0\end{equation} for all $x\in
\re^n$ and all $\lambda\geq 0$.

Note that hypothesis (\ref{hp-monot-s}) and the maximum principle lead to $v_{x_n}>0$ in $\re^{n+1}_+$. Thus,
denoting the derivative of $v^t(x,\lambda)$ with respect to $t$ by
$\partial_t v^t(x,\lambda)$, we have 
$$\partial_t v^t(x,\lambda)=v_{x_n}(x',x_n+t,\lambda)>0\quad
\mbox{for all}\;\;x \in \re^n,\;\lambda\geq 0.$$
By \eqref{limv-s}, we have that
\begin{equation}
\label{limener}
\lim_{t\rightarrow
+\infty}\mathcal{E}_{s,C_R}(v^t)=0.
\end{equation}

Next, we bound the
derivative of $\mathcal{E}_{s,C_R}(v^t)$ with respect to $t$. Recall that we have
set $a=1-2s$. We
use that $v^t$ is a bounded solution of problem (\ref{eq2-s}), the bounds \eqref{grad-x}, \eqref{grad-lambda>1} for the derivatives of $v^t$, and the crucial fact
that $\partial_t v^t>0$. Let $\nu$ denote the exterior normal to
the lateral boundary $\partial B_R\times (0,R)$ of the cylinder
$C_R$.

We have
\begin{equation*}
\begin{split}
\partial_t\mathcal{E}_{s,C_R}(v^t)&= d_s\int_0^R d\lambda
\int_{B_R}dx\lambda^a\nabla v^t\cdot \nabla (\partial_t v^t)
+\int_{B_R}dxG'(v^t (x,0))\partial_t v^t(x,0)\\
&= d_s\int_0^Rd\lambda \int_{\partial B_R}d\mathcal H^{n-1}\lambda^a\frac{\partial
v^t}{\partial \nu}
\partial_t v^t + d_s\int_{B_R\times \{\lambda=R\}}dx\lambda^a\frac{\partial
v^t}{\partial\lambda}
\partial_t v^t(x,R) \\
&\geq -Cd_s\int_0^R d\lambda \frac{\lambda^a}{1+\lambda}\int_{\partial B_R}d\mathcal H^{n-1} \partial_t v^t- Cd_s{R^{-2s}}\int_{B_R\times \{\lambda=R\}}dx \partial_t
v^t(x,R),\end{split}\end{equation*} 
where in the last inequality we have used both gradient bounds \eqref{grad-x} and \eqref{grad-lambda>1} for the first term (since $\frac{\partial v^t}{\partial \nu}$ is a horizontal derivative) and the bound \eqref{grad-lambda>1} for the second term. We recall that here $C$ is a constant uniformly bounded for $s$ away from $0$.
Hence, for every $T>0$, we have
\begin{eqnarray*}
\mathcal{E}_{s,C_R}(v)&=&\mathcal{E}_{s,C_R}(v^T)-\int_0^T\partial_t
\mathcal{E}_{s,C_R}(v^t)dt \\
&\leq& \mathcal{E}_{s,C_R}(v^T)+ Cd_s\int_0^Tdt \int_0^R d\lambda \frac{\lambda^a}{1+\lambda}\int_{\partial B_R}d\mathcal H^{n-1} \partial_t v^t \\
&&\hspace {0.7em} + Cd_sR^{-2s}\int_0^T dt\int_{B_R\times
\{\lambda=R\}}dx \partial_t v^t(x,R)  \\
&=& \mathcal{E}_{s,C_R}(v^T)+ Cd_s\int_{\partial B_R}d\mathcal H^{n-1} \int_0^R d\lambda\frac{\lambda^a}{1+\lambda}  \int_0^T dt \partial_t v^t(x,\lambda)\\
&&\hspace {0.7em} + Cd_sR^{-2s}\int_{B_R\times
\{\lambda=R\}}dx \int_0^T dt\partial_t v^t(x,R)  \\
&=& \mathcal{E}_{s,C_R}(v^T)+Cd_s\int_{\partial B_R}d\mathcal H^{n-1} \int_0^R d\lambda\frac{\lambda^a}{1+\lambda} (v^T-v^0)(x,\lambda)\\
&&\hspace {0.7em} + Cd_sR^{-2s}\int_{B_R\times
\{\lambda=R\}}dx(v^T-v^0)(x,R) \\
&\leq&\mathcal{E}_{s,C_R}(v^T) + Cd_sR^{n-1}\int_0^R \frac{\lambda^a}{1+\lambda} d\lambda +
 Cd_sR^{n-2s}\\
&\leq&  \mathcal{E}_{s,C_R}(v^T) + Cd_sR^{n-1}\left(\int_0^1\lambda^{1-2s}d\lambda+
\int_1^R \lambda^{-2s}d\lambda \right)+Cd_sR^{n-2s}.
\end{eqnarray*}
Using the change of variables $\lambda=\rho R$, we have
$ \int_0^1\lambda^{1-2s}d\lambda +
\int_1^R \lambda^{-2s}d\lambda
= \frac{1}{2(1-s)}+R^{1-2s}\int_{1/R}^1
\rho^{-2s}d \rho$.
Thus, using that $d_s\approx 2(1-s)$ as $s\uparrow 1$ and distinguishing the three cases $s_0<s<1/2$, $s=1/2$, and $1/2<s<1$, we conclude
\begin{equation*}
\begin{split}
\mathcal E_{s,C_R}(v) & \leq \mathcal{E}_{s,C_R}(v^T) + CR^{n-1}+C(1-s)R^{n-2s}\int_{1/R}^1\rho^{-2s}d\rho+C(1-s)R^{n-2s}\\
&\leq \mathcal E_{s,C_R}(v^T) + CR^{n-2s}\int_{1/R}^1\rho^{-2s}d \rho.\end{split}\end{equation*}

Letting $T \rightarrow + \infty$ and using \eqref{limener}, we obtain the desired estimate.
\end{proof}

\section{Monotonicity formula}
In this section we establish a monotonicity formula for the energy
functional associated to problem \eqref{eq2-s}. More precisely, we
prove that for every solution $v$ of problem \eqref{eq2-s} the quantity $\mathcal{E}_{\widetilde
B_{R}^+}(v)/R^{n-2s}$ is nondecreasing in $R$, where $\widetilde
B_{R}^+$ is the positive half ball in $\re^{n+1}_+$ centered at
$0$ and of radius $R$. 
From this result we deduce that our
energy estimate \eqref{energy-s1} is sharp when $0<s<1/2$ (see Remark \ref{R-mon})

In the following lemma we prove a Pohozaev identity for solutions
of problem \eqref{eq2-s} which will be important in the proof of
our monotonicity formula. We use the following notation:
$$\widetilde
B_{R}^+=\{(x,\lambda)\in \re^{n+1}_+:|(x,\lambda)|<R\}\:\:\mbox{and}\:\:\partial^+\widetilde B_R^+=\partial\widetilde B_R^+\cap\{\lambda>0\}.$$

 \begin{lem}\label{pohozaev}
Let $s\in (0,1)$, $f$ be any $C^{1,\gamma}$ nonlinearity with
$\gamma>\max\{0,1-2s\}$, and
suppose that $v$ is a bounded solution of problem \eqref{eq2-s}.

Then, for every $R>0$
\begin{eqnarray*}
&&\frac{n-2s}{2}\int_{\widetilde B_{R}^+}\lambda^{1-2s}|\nabla v|^2 dx d\lambda +
n\int_{B_R\times\{0\}}d_s^{-1} G(v) dx =\\
&&\hspace{1em}=\frac{R}{2}\int_{\partial^+\widetilde
B_{R}^+}\lambda^{1-2s}|\nabla v|^2
d\mathcal{H}^n-R\int_{\partial^+\widetilde
B_{R}^+}\lambda^{1-2s}\left(\frac{\partial v}{\partial
\nu}\right)^2 d\mathcal{H}^n + \\
&&\hspace{2em} +R\int_{\partial B_R\times\{0\}} d_s^{-1} G(v)
d\mathcal{H}^{n-1},\end{eqnarray*} where $d\mathcal{H}^{n-1}$ and $d\mathcal{H}^n$ denote respectively
the $(n-1)$-dimensional and $n$-dimensional Hausdorff measures and $\frac{\partial
v}{\partial \nu}$ denotes the outer normal derivative of
$v$ on $\partial^+\widetilde B_{R}^+$.
\end{lem}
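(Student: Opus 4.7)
The plan is the classical Pohozaev trick adapted to the degenerate elliptic operator $\mathrm{div}(\lambda^{1-2s}\nabla\,\cdot\,)$. Writing $z=(x,\lambda)\in\re^{n+1}_+$ and $X:=z\cdot\nabla v=x\cdot\nabla_x v+\lambda\partial_\lambda v$, I would multiply the equation $\mathrm{div}(\lambda^{1-2s}\nabla v)=0$ by $X$ and integrate over the half-ball $\widetilde B_R^+$. After integration by parts,
\[
0=-\int_{\widetilde B_R^+}\lambda^{1-2s}\nabla v\cdot\nabla X\,dz+\int_{\partial\widetilde B_R^+}\lambda^{1-2s}\,X\,\partial_\nu v\,d\mathcal H^n.
\]
The key algebraic computation is $\nabla v\cdot\nabla X=|\nabla v|^2+\tfrac12 z\cdot\nabla(|\nabla v|^2)$, combined with $\mathrm{div}(\lambda^{1-2s}z)=(n+2-2s)\lambda^{1-2s}$. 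Integrating by parts once more in the $z\cdot\nabla(|\nabla v|^2)$ term produces
\[
\int_{\widetilde B_R^+}\lambda^{1-2s}\nabla v\cdot\nabla X\,dz=-\frac{n-2s}{2}\int_{\widetilde B_R^+}\lambda^{1-2s}|\nabla v|^2\,dz+\frac{R}{2}\int_{\partial^+\widetilde B_R^+}\lambda^{1-2s}|\nabla v|^2\,d\mathcal H^n,
\]
where I have used that the bottom-face contribution vanishes because $z\cdot\nu=-\lambda=0$ on $B_R\times\{0\}$.

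Next I would handle the boundary terms from the first integration by parts. On the upper hemisphere $\nu=z/R$, so $X=R\,\partial_\nu v$ and the boundary integral becomes $R\int_{\partial^+\widetilde B_R^+}\lambda^{1-2s}(\partial_\nu v)^2 d\mathcal H^n$. On the bottom $\nu=-e_{n+1}$, so $\lambda^{1-2s}\partial_\nu v=-\lambda^{1-2s}\partial_\lambda v$ which tends to $f(v)/d_s$ by the Neumann condition; also $\lambda\partial_\lambda v=\lambda^{2s}\cdot(\lambda^{1-2s}\partial_\lambda v)\to 0$ by the gradient bound $|\lambda^{1-2s}\partial_\lambda v|\le c_s$ of Remark~\ref{oss-grad}, so $X\to x\cdot\nabla_x v$ on $\{\lambda=0\}$. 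Thus the bottom contribution is $d_s^{-1}\int_{B_R}f(v)\,x\cdot\nabla_x v\,dx$. Writing $f=-G'$ and integrating by parts in $\re^n$ gives
\[
\int_{B_R}f(v)\,x\cdot\nabla_x v\,dx=-\int_{B_R}x\cdot\nabla_x G(v)\,dx=n\int_{B_R}G(v)\,dx-R\int_{\partial B_R}G(v)\,d\mathcal H^{n-1}.
\]
Collecting all terms and rearranging yields exactly the desired identity.

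The main technical obstacle is justifying the integrations by parts in the presence of the weight $\lambda^{1-2s}$, which is either degenerate ($s<1/2$) or singular ($s>1/2$) on $\{\lambda=0\}$. I would proceed by approximation: work first on $\widetilde B_R^+\cap\{\lambda>\varepsilon\}$, where the equation is uniformly elliptic and all computations are standard, and then let $\varepsilon\downarrow 0$. To pass to the limit in the strip $\{0<\lambda<\varepsilon\}$ one uses the three gradient estimates in \eqref{grad-s}: $|\nabla_x v|\le c_s$ globally, $|\nabla v|\le c_s/\lambda$ in the interior, and $|\lambda^{1-2s}\partial_\lambda v|\le c_s$. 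These imply that $\lambda^{1-2s}|\nabla v|^2$ is integrable near $\{\lambda=0\}$ (the worst term, $\lambda^{1-2s}(\partial_\lambda v)^2$, is bounded by $c_s\lambda^{2s-1}\cdot|\partial_\lambda v|=c_s^2\lambda^{-1}\cdot\lambda^{2s-1}\cdot\lambda$ on the relevant range, and one checks integrability in each case) and that the boundary trace at $\lambda=\varepsilon$ of $\lambda^{1-2s}\partial_\lambda v\cdot X$ converges, as $\varepsilon\downarrow 0$, to $-d_s^{-1}f(v)\cdot x\cdot\nabla_x v$; the $C^{2,\beta}$ regularity of $u=v(\cdot,0)$ recalled after \eqref{eq2-s} ensures this convergence holds in $L^1(B_R)$.
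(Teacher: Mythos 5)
Your proposal is correct and follows essentially the same route as the paper: multiply the equation by $z\cdot\nabla v$, use the Rellich--Pohozaev divergence identity together with $\operatorname{div}(\lambda^{1-2s}z)=(n+2-2s)\lambda^{1-2s}$, identify the spherical boundary terms via $z=R\nu$, convert the bottom term with the Neumann condition, and integrate $f(v)\,x\cdot\nabla_x v$ by parts on $B_R$. Your additional $\varepsilon$-truncation argument justifying the integrations by parts near $\{\lambda=0\}$ (using the gradient bounds of Remark~\ref{oss-grad}) is a welcome extra precision that the paper's formal computation omits.
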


\begin{proof} Set $z=(x,\lambda)$. Multiplying the equation $\mbox{div}(\lambda^{1-2s}\nabla v)=0$ by
$\langle z,\nabla v\rangle= \langle (x,\lambda),\nabla v\rangle$,
we have
\begin{eqnarray*}
0&=&\mbox{div}(\lambda^{1-2s}\nabla v)\langle z,\nabla v\rangle\\
&=&\mbox{div}(\lambda^{1-2s}\nabla v\langle z,\nabla
v\rangle)-\lambda^{1-2s}\left\{|\nabla v|^2+\langle z, \nabla
\left(\frac{|\nabla v|^2}{2}\right)\rangle \right\}.\end{eqnarray*}
Now, observe that
\begin{eqnarray*}
\lambda^{1-2s}\langle z, \nabla \left(\frac{|\nabla
v|^2}{2}\right)\rangle&=&\mbox{div}\left\{\lambda^{1-2s}z\left(\frac{|\nabla
v|^2}{2}\right)\right\} \\
&&\hspace{1em}-\lambda^{1-2s}(n+1)\frac{|\nabla
v|^2}{2}-(1-2s)\lambda^{1-2s}\frac{|\nabla v|^2}{2}\\
&=& \mbox{div}\left\{\lambda^{1-2s} z
\left(\frac{|\nabla
v|^2}{2}\right)\right\}-\frac{n+2-2s}{2}\lambda^{1-2s}|\nabla
v|^2.\end{eqnarray*} Thus, we obtain
\begin{eqnarray*}
\mbox{div}\left\{\lambda^{1-2s}\left(\nabla v\langle z,\nabla
v\rangle-z\frac{|\nabla v|^2}{2}\right)\right\}
+ \frac{n-2s}{2}\lambda^{1-2s}|\nabla v|^2=0.\end{eqnarray*} Next,
we integrate by parts on $\widetilde B_R^+$:
\begin{eqnarray*}
&&\int_{\partial^+\widetilde B_R^+}\lambda^{1-2s}\langle
\nu,\nabla v \rangle \langle z,\nabla v \rangle d\mathcal H^n +
\int_{B_R\times\{0\}}-\lambda^{1-2s}\partial_{\lambda}v\langle x,\nabla_x v
\rangle dx \\
&&\hspace{2em}-\frac{1}{2}\int_{\partial^+\widetilde
B_R^+}\lambda^{1-2s}|\nabla v|^2\langle z,\nu\rangle d\mathcal
H^n+\frac{n-2s}{2}\int_{\widetilde B_R^+}\lambda^{1-2s}|\nabla
v|^2dx d\lambda=0,\end{eqnarray*}
where $\nu$ denotes the outer unit normal vector to $\partial^+\widetilde B_R^+$.

Now, we use that $z=R\nu$ on $\partial^+\widetilde B_R^+$ and
$-d_s\lambda^{1-2s}\partial_{\lambda}v=f(v)$ on $B_R$, to get
\begin{eqnarray*}
&&R\int_{\partial^+\widetilde {B}_R^+}
\lambda^{1-2s}\left(\frac{\partial v}{\partial \nu}\right)^2
d\mathcal H^n+\int_{B_R\times\{0\}}d_s^{-1}f(v)\langle x, \nabla_x v\rangle dx \\
&&\hspace{2em}-\frac{R}{2}\int_{\partial^+\widetilde {B}_R^+}
\lambda^{1-2s}|\nabla v|^2 d\mathcal H^n +
\frac{n-2s}{2}\int_{\widetilde {B}_R^+} \lambda^{1-2s} |\nabla
v|^2 dz=0.
\end{eqnarray*}
We conclude the proof, observing that, on $\{\lambda=0\}$,
\begin{eqnarray*}
&&\int_{B_R}f(v)\langle x, \nabla_x v\rangle dx=-\int_{B_R}
\langle x, \nabla_x G(v) \rangle dx
=\int_{B_R}\left(-\mbox{div}(xG(v))+nG(v)\right)dx
\\&&\hspace{2em} =n\int_{B_R}G(v)dx -R\int_{\partial B_R}G(v)
d\mathcal H^{n-1}.
\end{eqnarray*}
\end{proof}

We can now prove the monotonicity formula.
\begin{prop}\label{mon-formula}
Let $s\in (0,1)$, $f$ be any $C^{1,\gamma}$ nonlinearity with
$\gamma>\max\{0,1-2s\}$, and
suppose that $v$ is a bounded solution of problem \eqref{eq2-s}, and that
$G(t)\geq 0$ for every $t\in \re$.

Then, the function
$$\phi(R)=\frac{1}{R^{n-2s}}\left\{\frac{d_s}{2}\int_{\widetilde
B_R^+}\lambda^{1-2s}|\nabla v|^2 dx d\lambda +\int_{B_R\times\{0\}}G(v)dx\right\}$$ is
a nondecreasing function of $R>0$.
\end{prop}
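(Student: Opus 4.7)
The plan is to differentiate $\phi(R)$ in $R$ and show the derivative is nonnegative, by combining the definition of $\phi$ with the Pohozaev identity of Lemma~\ref{pohozaev}.

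First I would write $\phi(R)=E(R)/R^{n-2s}$, where
\[
E(R)=\frac{d_s}{2}\int_{\widetilde B_R^+}\lambda^{1-2s}|\nabla v|^2\,dx\,d\lambda+\int_{B_R\times\{0\}}G(v)\,dx.
\]
By the coarea formula (or simply by differentiating the volume integral in $R$), the derivative of $E(R)$ is a boundary integral:
\[
E'(R)=\frac{d_s}{2}\int_{\partial^+\widetilde B_R^+}\lambda^{1-2s}|\nabla v|^2\,d\mathcal H^n+\int_{\partial B_R\times\{0\}}G(v)\,d\mathcal H^{n-1}.
\]
So
\[
\phi'(R)=\frac{1}{R^{n-2s+1}}\bigl\{R\,E'(R)-(n-2s)\,E(R)\bigr\},
\]
and the task is to show that the bracket is nonnegative.

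Next I would apply Lemma~\ref{pohozaev}, multiplied through by $d_s$, which asserts exactly that
\[
\frac{(n-2s)d_s}{2}\int_{\widetilde B_R^+}\lambda^{1-2s}|\nabla v|^2+n\int_{B_R}G(v)
=\frac{Rd_s}{2}\int_{\partial^+\widetilde B_R^+}\lambda^{1-2s}|\nabla v|^2-Rd_s\int_{\partial^+\widetilde B_R^+}\lambda^{1-2s}\Bigl(\tfrac{\partial v}{\partial\nu}\Bigr)^{\!2}+R\int_{\partial B_R}G(v).
\]
Adding $(n-2s)\int_{B_R}G(v)$ to both sides, the left side becomes $(n-2s)E(R)$ and the right side becomes $R\,E'(R)-Rd_s\int_{\partial^+\widetilde B_R^+}\lambda^{1-2s}(\partial_\nu v)^2-2s\int_{B_R}G(v)$. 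Rearranging,
\[
R\,E'(R)-(n-2s)\,E(R)=Rd_s\int_{\partial^+\widetilde B_R^+}\lambda^{1-2s}\Bigl(\tfrac{\partial v}{\partial\nu}\Bigr)^{\!2}d\mathcal H^n+2s\int_{B_R\times\{0\}}G(v)\,dx.
\]

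Since $1-2s>-1$ the weight $\lambda^{1-2s}$ is locally integrable near $\{\lambda=0\}$, so the boundary term on $\partial^+\widetilde B_R^+$ is manifestly nonnegative; the potential term is nonnegative by the hypothesis $G\ge 0$; and $2s>0$. Therefore $\phi'(R)\ge 0$ for all $R>0$, as desired. I do not anticipate any genuine obstacle here: the main point is bookkeeping between $E$, $E'$, and the Pohozaev identity. The only mild technicality is justifying the differentiation in $R$ (and in particular the use of the coarea formula) when the weight $\lambda^{1-2s}$ degenerates or blows up at $\lambda=0$; this can be handled by cutting off a neighborhood of $\{\lambda=0\}$, applying the argument on $\widetilde B_R^+\cap\{\lambda>\varepsilon\}$, and letting $\varepsilon\downarrow 0$ using the gradient bounds of Remark~\ref{oss-grad}.
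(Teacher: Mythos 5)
Your proof is correct and is essentially the paper's own argument: differentiate the rescaled energy and substitute the Pohozaev identity of Lemma~\ref{pohozaev} to obtain $\phi'(R)=\frac{d_s}{R^{n-2s}}\int_{\partial^+\widetilde B_R^+}\lambda^{1-2s}(\partial_\nu v)^2\,d\mathcal H^n+\frac{2s}{R^{n-2s+1}}\int_{B_R\times\{0\}}G(v)\,dx\ge 0$. The only blemish is the phrase ``adding $(n-2s)\int_{B_R}G(v)$ to both sides,'' which should read ``subtracting $2s\int_{B_R}G(v)$ from both sides'' (the left-hand side of the Pohozaev identity equals $(n-2s)E(R)+2s\int_{B_R}G(v)$); your final identity is nonetheless the correct one.
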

\begin{proof} We have that
\begin{eqnarray*}
&&\phi'(R)=-\frac{(n-2s)d_s}{2R^{n-2s+1}}\int_{\widetilde
B_R^+}\lambda^{1-2s}|\nabla v|^2 dx d\lambda+ \frac{d_s}{2R^{n-2s}}
\int_{\partial^+\widetilde B_R^+}\lambda^{1-2s}|\nabla v|^2 d\mathcal H^n\\
&&\hspace{2em}-\frac{n-2s}{R^{n-2s+1}}\int_{B_R\times \{0\}}G(v)dx+\frac{1}{R^{n-2s}}\int_{\partial
B_R\times \{0\}}G(v)d\mathcal H^{n-1}.\end{eqnarray*} Using the Pohozaev identity of Lemma \ref{pohozaev}, we get
$$\phi'(R)=\frac{d_s}{R^{n-2s}}\int_{\partial^+\widetilde
B_R^+}\lambda^{1-2s}\left(\frac{\partial v}{\partial \nu}\right)^2
d\mathcal H^n + \frac{2s}{R^{n-2s+1}}\int_{B_R\times \{0\}}G(v)dx \geq 0.$$
\end{proof}
\begin{oss} \label{R-mon}
 Since
$$\mathcal E_{\widetilde B^+_{R}}(v)\leq  \mathcal E_{C_{R}}(v),$$ 
Proposition \ref{mon-formula} gives that, for every bounded solution $v$ of problem \eqref{eq2-s} which is not identically zero, the following lower bound holds:
\begin{equation}\label{lower}d_s\int_{C_R}\frac{1}{2}\lambda^{1-2s}|\nabla v|^2dx
d\lambda+\int_{B_R}\{G(u)-c_u\}dx \geq c_1R^{n-2s},\end{equation} for some constant $c_1>0$ depending on $v$. 
Note that Theorem \ref{energy-est-s} establishes $\mathcal E_{C_R}(v)\leq CR^{n-2s}$ for every $s \in (0,1/2)$. Thus, this bound is sharp as a consequence of \eqref{lower}.

\end{oss}

\section{$H^{s}$ estimate}
In this section we recall some definitions and properties of
the spaces $H^{s}(\re^n)$ and $H^{s}(\partial \Omega)$, where
$\Omega$ is a bounded subset of $\re^{n+1}$ with Lipschitz
boundary $\partial \Omega$ (see
\cite{LM}).

$H^{s}(\re^n)$ is the space of functions $u\in L^2(\re^n)$ such
that
$$\int_{\re^n}\int_{\re^n}\frac{|u(x)-u(\overline{x})|^2}{|x-\overline{x}|^{n+2s}}dx
d\overline{x}< +\infty,$$ equipped with the norm

$$||u||_{H^{s}(\re^n)}=\left(||u||^2_{L^2(\re^n)}+ \int_{\re^n}\int_{\re^n}\frac{|u(x)-u(\overline{x})|^2}{|x-\overline{x}|^{n+2s}}dx
d\overline{x}\right)^{\frac{1}{2}}.$$

As in section 3 of \cite{C-Cinti1}, using a family of charts and a
partition of unity, we can define the space $H^s(\partial
\Omega)$, where $\Omega$ is a bounded subset of $\re^{n+1}$ with
Lipschitz boundary.
We use the same notations of \cite{C-Cinti1}.

Consider an atlas
$\{(O_j,\varphi_j), j=1,...,m\}$ where $\{O_j\}$ is a family of
open bounded sets in $\re^{n+1}$ such that  $\{O_j\cap\partial
\Omega;j=1,...,m\}$ cover $\partial \Omega$. The functions $\varphi_j$ are  corresponding bilipschitz
diffeomorphisms such that
\begin{itemize}
\item $\varphi_j:O_j\rightarrow U:=\{(y,\mu)\in
\re^{n+1}:|y|<1,\:-1<\mu<1\}$, \item

$\varphi_j:O_j\cap \Omega \rightarrow U^+:=\{(y,\mu)\in
\re^{n+1}:|y|<1,\:0<\mu<1\}$, and

\item $\varphi_j:O_j\cap\partial \Omega \rightarrow
\{(y,\mu)\in \re^{n+1}:|y|<1,\mu=0\}$.

\end{itemize}

Let $\{\alpha_j\}$ be a partition of unity on $\partial \Omega$
such that $0\leq \alpha_j \in C^{\infty}_c(O_j),
\:\sum_{j=1}^m \alpha_j=1$ on $\partial \Omega$. If $u$ is a function on $\partial
\Omega$, decompose $u=\sum_{j=1}^m u\alpha_j $ and define the function
$$(u\alpha_j)\circ\varphi_j^{-1}(y,0):=(u\alpha_j
)(\varphi_j^{-1}(y,0)),\quad \mbox{for every}\;(y,0)\in U\cap
\{\mu=0\}.$$

Since $\alpha_j$ has compact support in $O_j$,
the function $(u\alpha_j)\circ\varphi_j^{-1}(\cdot,0)$ has compact support in
$U\cap \{\mu=0\}$ and therefore we may consider
$((u\alpha_j)\circ \varphi_j^{-1})(\cdot,0)$ to be defined in $\re^n$ extending it
by zero out of $U\cap \{\mu=0\}$.  Now we define
$$H^{s}(\partial \Omega):=\{u\,|\,(u\alpha_j)\circ\varphi_j^{-1}(\cdot,0)\in
H^{s}(\re^n),\:j=1,...,m\}$$ equipped with the norm

$$\left(\sum_{j=1}^m ||(u\alpha_j)\circ\varphi_j^{-1}(\cdot,0)||_{H^{s}(\re^n)}^2\right)^{\frac{1}{2}}.$$ Independently of the choice of the
system of local maps $\{O_j,\varphi_j\}$ and of the partition of
unity $\{\alpha_j\}$, these norms are all equivalent to
$$||u||_{H^{s}(\partial \Omega)}:=\left(||u||^2_{L^2(\partial
\Omega)}+\int_{\partial \Omega}\int_{\partial
\Omega}\frac{|u(z)-u(\overline{z})|^2}{|z-\overline{z}|^{n+2s}}d\mathcal H^n({z})
d\mathcal H^n({\overline{z}})\right)^{\frac{1}{2}}.$$

We can give now the proof of Theorem \ref{extension-s}.

\begin{proof}[Proof of Theorem \ref{extension-s}] Let $s\in (0,1)$.\\
\textbf{Case 1}: $\Omega=\re^{n+1}_+$.
We first consider the case of a half space
$\Omega=\re^{n+1}_+$ and $M=\{(x',x_n)\in \re^n:x_n<0\}$. Let
$\zeta$ be a bounded function belonging to $C(\re^n)$. Following the first
part of the proof of Proposition 3.1 in \cite{C-Cinti1}, we
consider a $C^{\infty}$ function  $K(x)$, defined on $\re^n$ with
compact support in $B_1$ and such that $\int_{\re^n} K(x)dx=1$.
Define $\widetilde{K}(x,\lambda)$ on $\re^{n+1}_+$ in the
following way:
$$\widetilde{K}(x,\lambda):=\frac{1}{\lambda^{n}}K\left(\frac{x}{\lambda}\right)$$
and finally define the extension $\widetilde{\zeta}$ as
\begin{equation}\label{zeta-s}\widetilde{\zeta}(x,\lambda)=\int_{\re^n}\widetilde{K}(x-\overline{x},\lambda)\zeta(\overline{x})d\overline{x}.\end{equation}
Note that, since $\int_{\re^n} \widetilde K(x,\lambda) dx=1$, we have
\begin{equation}\label{L2-s}
||\widetilde \zeta(\cdot,\lambda)||_{L^2(\re^n)}\leq ||\zeta||_{L^2(\re^n)}\quad \mbox{for every} \:\:\lambda \geq 0,\end{equation} and thus
\begin{equation}\label{L2w-s}
 \int_0^1 d\lambda \:\lambda^{1-2s}\int_{\re^n} dx |\widetilde \zeta(x,\lambda)|^2 \leq \frac{1}{2(1-s)} ||\zeta||^2_{L^2(\re^n)}.
\end{equation}

In \cite{C-Cinti1} (see the proof of Proposition 3.1), a simple calculation led to the following
estimate for the gradient of $\widetilde{\zeta}$:
\begin{equation}\label{1-s}
|\nabla \widetilde{\zeta}(x,\lambda)|^2\leq C
\int_{\{|x-\overline{x}|<\lambda\}}\frac{|\zeta(x)-\zeta(\overline{x})|^2}{\lambda^{n+2}}d\overline{x},\end{equation}
where $\nabla$ denotes the gradient with respect to $x$ and to $\lambda$, and $C$ depends only on $n$.

Since $M=\{(x',x_n)\in \re^n:x_n<0\}$, we have
$d_M(x,\lambda)=((x_n)_+^2+\lambda^2)^{1/2}$, where as usually
$(x_n)_+=\max \{x_n,0\}$.

Consider now,
separately, the two cases $0<s\leq 1/2$ and $1/2<s<1$.

If $0<s \leq 1/2$ then $a=1-2s \in [0,1)$ and we have that $d^a_M(x,\lambda)\leq (x_n)_+^a +\lambda^a$.
In the following computations $C$ will denote different positive constants which may depend on $s$.
Using \eqref{1-s}, we have
\begin{eqnarray*}
&&\hspace{-1em}\int_{\re^{n+1}_+}d_M^{a}(x,\lambda)|\nabla
\widetilde{\zeta}(x,\lambda)|^2dxd\lambda\leq 
\int_{\re^{n+1}_+}((x_n)_+^{a}+\lambda^{a})|\nabla
\widetilde{\zeta}(x,\lambda)|^2dxd\lambda\\
&& \hspace{4em}\leq
C\int_0^{+\infty}d\lambda
\int\int_{\{|x-\overline{x}|<\lambda\}}dx
d\overline{x}\:\frac{(x_n)_+^{a}+\lambda^{a}}{\lambda^{n+2}}|\zeta(x)-\zeta(\overline{x})|^2 \nonumber \\
&& \hspace{4em}\leq
C\int_0^{+\infty}d\lambda\int\int_{\{|x-\overline{x}|<\lambda\}}dx
d\overline{x}\:
\frac{1}{\lambda^{n+2-a}}|\zeta(x)-\zeta(\overline{x})|^2 \nonumber \\
&&\hspace{5em}+
C\int_0^{+\infty}d\lambda\int\int_{\{|x-\overline{x}|<\lambda\}}dx
d\overline{x}\:
\frac{(x_n)_+^a}{\lambda^{n+2}}|\zeta(x)-\zeta(\overline{x})|^2\nonumber \\
&&\hspace{4em}\leq
C\int_{\re^n}\int_{\re^n} dx
d\overline{x}|\zeta(x)-\zeta(\overline{x})|^2 [\lambda^{-n-1+a}]_{+\infty}^{|x-\overline x|} \\
&&\hspace{5em}+ C\int_{\re^n}\int_{\re^n} dx
d\overline{x}\:(x_n)_+^a|\zeta(x)-\zeta(\overline{x})|^2 [\lambda^{-n-1}]_{+\infty}^{|x-\overline x|}\\
&&\hspace{4em}\leq
C\int_{\re^n}\int_{\re^n}\frac{|\zeta(x)-\zeta(\overline{x})|^2}{|x-\overline{x}|^{n+2s}}dx
d\overline{x}\\
&&\hspace{5em}+C\int_{\re^n}\int_{\re^n}
(x_n)_+^{1-2s}\frac{|\zeta(x)-\zeta(\overline{x})|^2}{|x-\overline{x}|^{n+1}}dx
d\overline{x}.\end{eqnarray*}
Next, we observe that, in this bound, the last integral can be computed
only on the set $\{(x,\overline x)\in \re^n\times \re^n:|x-\overline{x}|<(x_n)_+/{2}\}$, which is contained in $(\re^n\setminus M)\times (\re^n\setminus M)$. Indeed
\begin{eqnarray*}&&\int\int_{\{|x-\overline{x}|\geq\frac{(x_n)_+}{2}\}}(x_n)_+^{1-2s}\frac{|\zeta(x)-\zeta(\overline{x})|^2}{|x-\overline{x}|^{n+1}}dx
d\overline{x}\\
&&\hspace{2em}\leq
2^{1-2s}\int\int_{\{|x-\overline{x}|\geq\frac{(x_n)_+}{2}\}}\frac{|\zeta(x)-\zeta(\overline{x})|^2}{|x-\overline{x}|^{n+2s}}dx
d\overline{x},\end{eqnarray*}
which can be absorbed in the first integral of the above bound.

Thus, if $0<s\leq 1/2$, we have
\begin{eqnarray*}&& \int_{\re^{n+1}_+}d_M(x,\lambda)^{1-2s}|\nabla \widetilde \zeta(x,\lambda)|^2dx d\lambda  \\
&& \hspace{0.5em} \leq C \int_{\re^n}\int_{\re^n}\frac{|\zeta(x)-\zeta(\overline x)|^2}{|x-\overline x|^{n+2s}}dx d\overline x 
+ C\int_{\re^n\setminus M}\int_{\re^n \setminus M}d_M(x)^{1-2s}\frac{|\zeta(x)-\zeta(\overline x)|^2}{|x-\overline x|^{n+1}}dx d\overline x.
\end{eqnarray*}

If $1/2<s<1$, set $$b=-a=2s-1>0.$$ In this case we use that $d_M(x,\lambda)\geq \max\{(x_n)_+,\lambda\}$, which leads to
$$d^a_M(x,\lambda)=1/d^b_M(x,\lambda)\leq 1/(\max\{(x_n)_+,\lambda\})^b.$$ In what follows we will use $d_M^a(x,\lambda)\leq 1/\lambda^b$ if $(x_n)_+=0$ and $d_M^a(x,\lambda)\leq 1/(x_n)_+^b$ if $(x_n)_+>0$. We have
\begin{eqnarray}
&&\int_{\re^{n+1}_+}d_M^a(x,\lambda)|\nabla \widetilde{\zeta}(x,\lambda)|^2dx
d\lambda \leq  \nonumber \\
&&\hspace{4em}\leq
C\int_0^{+\infty}d\lambda\int_{\{(x_n)_+=0\}}\int_{\{|x-\overline{x}|<\lambda\}}dx
d\overline{x}\frac{|\zeta(x)-\zeta(\overline{x})|^2}{\lambda^{n+2+b}}\nonumber \\
&&\hspace{5em}+C\int_0^{+\infty}d\lambda\int_{\{(x_n)_+>0\}}\int_{\{|x-\overline{x}|<\lambda\}}dx
d\overline{x}\frac{|\zeta(x)-\zeta(\overline{x})|^2}{(x_n)_+^b\lambda^{n+2}}\nonumber\\
&&\hspace{4em}\leq C\int_{\{(x_n)_+=0\}}dx\int_{\re^n}
d\overline{x}\frac{|\zeta(x)-\zeta(\overline{x})|^2}{|x-\overline{x}|^{n+2s}}\nonumber
\\
&&\hspace{5em}+ C\int_{\{(x_n)_+>0\}}dx\int_{\re^n}
d\overline{x}\frac{1}{(x_n)_+^b}\frac{|\zeta(x)-\zeta(\overline{x})|^2}{|x-\overline{x}|^{n+1}}
\nonumber \\
&&\hspace{4em}\leq C\int_{\{(x_n)_+=0\}} dx \int_{\{(\overline x_n)_+=0\}}
d\overline{x}\frac{|\zeta(x)-\zeta(\overline{x})|^2}{|x-\overline{x}|^{n+2s}}\label{term-a-s}
\\
&&\hspace{5em}+
C\int_{\{(x_n)_+>0\}}dx\int_{\re^n}
d\overline{x}\frac{1}{(x_n)_+^b}\frac{|\zeta(x)-\zeta(\overline{x})|^2}{|x-\overline{x}|^{n+1}}.\label{term-b-s}
\end{eqnarray}
Observe that the integral in \eqref{term-a-s} is computed only on the set
$\{(x,\overline{x})\in \re^n\times \re^n|(x_n)_+=0,
(\overline{x}_n)_+=0\}$. Indeed the set $L:=M\times
(\re^{n}\setminus M)=\{(x,\overline{x})\in \re^n\times
\re^n\:|\:(x_n)_+=0, (\overline{x}_n)_+>0\}\subseteq
\{(x,\overline{x})\in \re^n\times \re^n\:|\:(\overline{x}_n)_+ \leq
|x-\overline{x}|\}$. Then if $(x,\overline{x})\in L$
$$\frac{1}{|x-\overline{x}|^{n+1+b}}\leq
\frac{1}{(\overline{x}_n)^b_+}\cdot
\frac{1}{|x-\overline{x}|^{n+1}}$$ and hence we have that
$$\int_{\{(x_n)_+=0\}}dx\int_{\{(\overline{x}_n)_+>0\}}d\overline{x}\frac{|\zeta(x)-\zeta(\overline{x})|^2}{|x-\overline{x}|^{n+2s}}\leq
C\int_{\{(\overline{x}_n)_+>0\}}d\overline x \int_{\re^n}d{x}\frac{1}{(\overline{x}_n)_+^b}\frac{|\zeta(x)-\zeta(\overline{x})|^2}{|x-\overline{x}|^{n+1}},$$ which is equal to the integral in \eqref{term-b-s}.
This concludes the proof in the case of the half space.

\textbf{Case 2}: Let $\Omega \subset \re^{n+1}$ be a bounded domain with Lipschitz boundary $A=\partial \Omega$, and let $w\in C(\partial \Omega)$.

Let $\Gamma$ be the boundary (relative to $A=\partial \Omega$) of $M$ and let $\widetilde B_{r_i}=\widetilde B_{r_i}(p_i)\subset \re^{n+1}$ be the ball centered at $p_i\in \partial \Omega$ and of radius $r_i$. 
We set $A_{r_i}:=\widetilde B_{r_i}\cap \partial \Omega$. 
Let $Q_1$ denote the unite cube in $\re^n$.

Since $\partial \Omega$ is compact, we can consider a finite open covering of $\partial \Omega$,
$$ \bigcup_{i=1}^m A_{r_i}:=\bigcup_{i=1}^m (\widetilde B_{r_i}\cap \partial \Omega),$$
such that for every $i$ there exists a bilipschitz function $\varphi_i:\widetilde B_{r_i}\rightarrow Q_1\times (-1,1)$ which satisfies
\begin{equation}\label{phi1-s}
\varphi_i(\widetilde B_{r_i}\cap \Omega)= Q_1\times (0,1)\quad \mbox{and}\quad
 \varphi_i(A_{r_i})=Q_1\times \{0\}.
\end{equation} Moreover we may require that
\begin{itemize}
 \item if $\Gamma_i=A_{r_i} \cap \Gamma\neq \emptyset$, then
\begin{equation}\label{phi2-s}
 \varphi_i(\Gamma_i)=\{x \in Q_1:x_n=0\};
\end{equation}
\begin{equation}\label{phi3-s}
 \varphi_i(M\cap A_{r_i})=\{x \in Q_1:x_n<0\}=Q_1^-;
\end{equation}

\item if $\overline A_{r_i} \cap M= \emptyset$, then
$$r_i=\frac{1}{3}d_{M}(p_i),$$
where $p_i$ and $r_i$ are respectively the center and the radius of the ball $\widetilde B_{r_i}$.
\end{itemize}

Observe that the number $m$ of sets $A_{r_i}$ which cover $\partial \Omega$, and the Lipschitz constants of $\varphi_i$, depend only on $\partial \Omega$ and $\Gamma$.

We consider a partition of unity $\{\alpha_i\}_{i=1,...,m}$ relative to the covering $\{\widetilde{B}_{r_i}\}_{i=1,...,m}$, where $\alpha_i \in C^\infty_c(\widetilde{B}_{r_i})$ and $\sum_{i=1}^m \alpha_i=1$ on $\partial \Omega$.

If $w$ is a function defined on $\partial \Omega$, we write
$$w=\sum_{i=1}^m w\alpha_i=\sum_{i=1}^m w_i.$$
Using the bilipschitz map $\varphi_i$, we define
$$\zeta_i(y):=w_i(\varphi_i^{-1}(y,0))\quad \mbox{for every}\;\;y \in Q_1.$$
Then $\zeta_i$ has compact support in $Q_1$ and we extend it by $0$ outside $Q_1$ in all $\re^n$.

Next, we consider $\widetilde \zeta_i$, the extension of $\zeta_i$ in $\re^{n+1}_+$ defined by the convolution in \eqref{zeta-s}.
Since $\alpha_i\in C_c^{\infty}(\widetilde B_{r_i})$, there exists a function $\beta_i\in C_c^{\infty}(\widetilde B_{r_i})$ such that $\beta_i\equiv 1$ in the support of $\alpha_i$. Thus $\widetilde w_i:=\beta_i(\widetilde \zeta_ i \circ \varphi_i)$, extended by zero outside of $\widetilde B_{r_i}$, is well defined as a function in $\overline \Omega$ and agrees with $w\alpha_i=\beta_i w \alpha_i$ on $\partial \Omega$. We now define
$$\widetilde{w}=\sum_{i=1}^m \widetilde w_i=\sum_{i=1}^m\beta_i(\widetilde{\zeta}_i\circ \varphi_i) \quad
\mbox{in}\:\: \overline \Omega,$$
which agrees with $w$ on $\partial \Omega$.

Observe that, since $\varphi_i$ is a bilipschitz map and $\alpha_i,\:\beta_i \in C_c^{\infty}(\widetilde B_{r_i})$ for every
$i=1,...,m$, we have
$$|\nabla \widetilde{w}_i|\leq C\left\{|\nabla \beta_i|
|\widetilde{\zeta_i}\circ\varphi_i|+|\beta_i||(\nabla \widetilde{\zeta_i})\circ\varphi_i|\right\},$$
and thus
\begin{eqnarray*}\int_{\widetilde B_{r_i}\cap \Omega}
d^{1-2s}_M(z)|\nabla \widetilde{w}_i|^2 dz &\leq& C
\int_{\widetilde B_{r_i}\cap \Omega}
d^{1-2s}_M(z)|\widetilde \zeta_i \circ \varphi_i|^2 dz\\
 && \hspace{1em}+ C \int_{\widetilde B_{r_i}\cap \Omega}d^{1-2s}_M(z)|(\nabla
\widetilde{\zeta}_i)\circ \varphi_i|^2dz.\end{eqnarray*}
Observe that when $0<s \leq 1/2$, we have that $d^{1-2s}_M$ is uniformly bounded in $\Omega$, and thus using \eqref{L2-s} we get
\begin{eqnarray*}&& \int_{\widetilde B_{r_i}\cap \Omega}
d^{1-2s}_M(z)|\widetilde \zeta_i \circ \varphi_i|^2 dz\leq C\int_{\widetilde B_{r_i}\cap \Omega}
|\widetilde \zeta_i \circ \varphi_i|^2 dz\\
&&\hspace{1em}\leq C||\zeta||^2_{L^2(\re^n)}\leq C||w||^2_{L^2(\partial \Omega)}.\end{eqnarray*} 
On the other hand, when $1/2<s<1$, we use \eqref{L2w-s}, to obtain
\begin{eqnarray*}&&\int_{\widetilde B_{r_i}\cap \Omega}
d^{1-2s}_M(z)|\widetilde \zeta_i \circ \varphi_i|^2 dz\leq C \int_0^1\int_{Q_1}\lambda^{1-2s}|\widetilde \zeta_i(x,\lambda)|^2dx d\lambda \\
&&\hspace{1em} \leq C||\zeta||^2_{L^2(\re^n)}\leq C||w||^2_{L^2(\partial \Omega)}.\end{eqnarray*}
Thus
\begin{equation}\label{w_i}
 \int_{\widetilde B_{r_i}\cap \Omega}
d^{1-2s}_M(z)|\nabla \widetilde{w}_i|^2 dz \leq C||w||^2_{L^2(\partial \Omega)}+C \int_{\widetilde B_{r_i}\cap \Omega}d^{1-2s}_M(z)|(\nabla
\widetilde{\zeta}_i)\circ \varphi_i|^2dz.
\end{equation}
Using this bound we can prove the following \\
\textit{Claim}: \eqref{ext-s} holds with $\widetilde w$ and $w$ replaced by $\widetilde w_i$ and $w_i$, which have compact support in $\widetilde B_{r_i}\cap \overline\Omega$ and $A_{r_i}$ respectively.

It is enough to prove the claim to conclude the proof. Indeed, note first that
$$\int_{\Omega}d_{M}(z)^{1-2s}|\nabla \widetilde w|^2 dz \leq C\sum_{i=1}^m\int_{\widetilde B_{r_i}\cap \Omega}
d_{M}(z)^{1-2s}|\nabla \widetilde w_i|^2 dz.$$
Moreover, for every $i=1,...,m$,
\begin{eqnarray}\label{*-s}&&\int\int_{B_{\rm frac}}\frac{|w_i(z)-w_i(\overline{z})|^2}
{|z-\overline{z}|^{n+2s}}d\mathcal H^n({z}) d\mathcal H^n({\overline{z}}) \nonumber \\
&&\hspace{1em}\leq C||w||^2_{L^2(\partial
\Omega)}+C\int\int_{B_{\rm frac}}\frac{|w(z)-w(\overline{z})|^2}
{|z-\overline{z}|^{n+2s}}d\mathcal H^n({z}) d\mathcal H^n({\overline{z}})\nonumber.\end{eqnarray} 
Indeed,
\begin{eqnarray*}
&&\int \int_{B_{\rm frac}}\frac{|(w\alpha_i)(z)-(w\alpha_i)(\overline{z})|^2}
{|z-\overline{z}|^{n+2s}}d\mathcal H^n({z}) d\mathcal H^n({\overline{z}} ) \\
&& \hspace{1em}=\int\int_{B_{\rm frac}}\frac{|(w\alpha_i)(z)-w(\overline z)\alpha_i(z)+
w(\overline z)\alpha_i(z)-(w\alpha_i)(\overline{z})|^2}
{|z-\overline{z}|^{n+2s}}d\mathcal H^n({z}) d\mathcal H^n({\overline{z}}) \\
&& \hspace{1em}\leq  2\int \int_{B_{\rm frac}}\frac{|\alpha_i(z)-\alpha_i(\overline{z})|^2|w(\overline z)|^2}
{|z-\overline{z}|^{n+2s}}d\mathcal H^n({z}) d\mathcal H^n({\overline{z}}) \\
&& \hspace{2em}+ 2\int\int_{B_{\rm frac}}\frac{|w(z)-w(\overline{z})|^2|\alpha_i(z)|^2}
{|z-\overline{z}|^{n+2s}}d\mathcal H^n({z}) d\mathcal H^n({\overline{z}}) \\
&& \hspace{1em}\leq  C||w||^2_{L^2(\partial \Omega)}+C
\int \int_{B_{\rm frac}}\frac{|w(z)-w(\overline{z})|^2}
{|z-\overline{z}|^{n+2s}}d\mathcal H^n({z})
d\mathcal H^n({\overline{z}}),\end{eqnarray*}
where $C$ denotes
different positive constants depending on $\Omega$ and $s$. To get the bound $C||w||^2_{L^2(\partial \Omega)}$ for the first term,
we have used spherical coordinates centered at $\overline z$ and that $\alpha_i$ is Lipschitz.

Arguing in the same way, we deduce
\begin{eqnarray*}&&\int\int_{B_{\rm weig}}d_M(z)^{1-2s}\frac{|w_i(z)-w_i(\overline{z})|^2}
{|z-\overline{z}|^{n+1}}d\mathcal H^n({z}) d\mathcal H^n({\overline{z}})\\
&&\hspace{2em}\leq C||w||^2_{L^2(\partial
\Omega)}+C\int\int_{B_{\rm weig}}d_M(z)^{1-2s}\frac{|w(z)-w(\overline{z})|^2}
{|z-\overline{z}|^{n+1}}d\mathcal H^n({z}) d\mathcal H^n({\overline{z}}).\end{eqnarray*}
Indeed, using spherical coordinates centered at $\overline z$, that $\alpha_i$ is Lipschitz, and the definition \eqref{Bweig} of $B_{weig}$, 
we deduce (after flattening the boundary)
\begin{eqnarray*}
&&\int \int_{B_{\rm weig}}d_M(z)^{1-2s}\frac{|(w\alpha_i)(z)-(w\alpha_i)(\overline{z})|^2}
{|z-\overline{z}|^{n+1}}d\mathcal H^n({z}) d\mathcal H^n({\overline{z}} ) \\
&& \hspace{1em}\leq  2\int \int_{B_{\rm weig}}d_M(z)^{1-2s}\frac{|\alpha_i(z)-\alpha_i(\overline{z})|^2|w(\overline z)|^2}
{|z-\overline{z}|^{n+1}}d\mathcal H^n({z}) d\mathcal H^n({\overline{z}}) \\
&& \hspace{2em}+ 2\int\int_{B_{\rm weig}}d_M(z)^{1-2s}\frac{|w(z)-w(\overline{z})|^2|\alpha_i(z)|^2}
{|z-\overline{z}|^{n+1}}d\mathcal H^n({z}) d\mathcal H^n({\overline{z}}) \\
&& \hspace{1em}\leq  C||w||^2_{L^2(\partial \Omega)}+C
\int \int_{B_{\rm weig}}d_M(z)^{1-2s}\frac{|w(z)-w(\overline{z})|^2}
{|z-\overline{z}|^{n+1}}d\mathcal H^n({z})
d\mathcal H^n({\overline{z}}).\end{eqnarray*}

Next, we prove the claim.

Observe that we have three different cases, depending on the relative positions between the sets $A_{r_i}$ and $M$.

Case a). First, consider the case $\Gamma_i=A_{r_i}\cap \Gamma \neq \emptyset$.
By \eqref{w_i}, we have that
\begin{eqnarray}\label{local-s}
&&\int_{\widetilde B_{r_i}\cap \Omega}d_M(z)^{1-2s}|\nabla \widetilde w_i|^2 dz \nonumber\\
&&\hspace{2em}\leq C||w||^2_{L^2(\partial \Omega)}+C \int_{\re_+^{n+1}}((x_n)_+ +\lambda)^{1-2s}|\nabla \widetilde \zeta_i|^2 dx d\lambda.
\end{eqnarray}

Then, using the result in case 1, applied to $\widetilde \zeta_i$, we get
\begin{eqnarray*}
&&\int_{\widetilde B_{r_i}\cap \Omega}d_M(z)^{1-2s}|\nabla \widetilde w_i|^2 dz \\
&&\hspace{2em}\leq C||w||^2_{L^2(\partial \Omega)}+  C \int_{\re^{n+1}_+}((x_n)_+ 
+\lambda)^{1-2s}|\nabla\widetilde \zeta_i|^2 dx d\lambda\\
&& \hspace{2em} \leq  C||w||^2_{L^2(\partial \Omega)}+
C\int\int_{B_{\rm frac}}\frac{|\zeta_i(x)-\zeta_i(\overline
x)|^2}{|x-\overline x|^{n+2s}}dx d\overline
x\\
&& \hspace{3em} +C \int\int_{B_{\rm weig}}(x_n)_+^{1-2s}\frac{|\zeta_i(x)-\zeta_i(\overline
x)|^2}{|x-\overline x|^{n+1}}dx d\overline x,
\end{eqnarray*}
where $B_{\rm frac}$ and $B_{\rm weig}$ are defined as in
\eqref{Bfrac} and \eqref{Bweig} with $A=\re^{n}$ and $M=\{(x',x_n)\in
\re^n:x_n<0\}$.

Using the bilipschitz map $\varphi_i^{-1}$, we have
\begin{eqnarray*}
&&\int_{\widetilde B_{r_i}\cap \Omega}d_M(z)^{1-2s}|\nabla \widetilde w_i|^2 dz\\
&& \hspace{2em} \leq  C||w||^2_{L^2(\partial \Omega)}+  C\int\int_{B_{\rm frac}}\frac{| w_i(z)- w_i(\overline z)|^2}{|z-\overline z|^{n+2s}}d\mathcal H^n(z) d\mathcal H^n({\overline z})\\
&& \hspace{3em}+C \int\int_{B_{\rm weig}}d_M(z)^{1-2s}\frac{| w_i(z)- w_i(\overline z)|^2}{|z-\overline z|^{n+1}}d\mathcal H^n(z) d\mathcal H^n({\overline z}),
\end{eqnarray*}
where now, $B_{\rm frac}$ and $B_{\rm weig}$ are defined as in \eqref{Bfrac} and \eqref{Bweig} with $A=\partial \Omega$.

Case b). Second, consider the case $A_{r_i} \subset M$. In this case, the claim follows exactly as in case a), with $(x_n)_+=0$ in \eqref{local-s}.

Case c). Finally, consider the case $A_{r_i} \subset \partial \Omega \setminus M$.

We recall that, by construction $A_{r_i}=\widetilde B_{r_i} \cap \partial \Omega$ where $\widetilde B_{r_i}$ is the ball centered at $p_i \in \partial \Omega \setminus M$ and of radius $\displaystyle r_i=\frac{1}{3}d_{M}(p_i).$

Thus, for every $z \in \widetilde B_{r_i}\cap \Omega$, we have that
$$\frac{2}{3}d_{M}(p_i)\leq d_{M}(z)\leq \frac{4}{3}d_{M}(p_i).$$

Then, for every $i=1,...,m$
$$\int_{\widetilde B_{r_i} \cap \Omega} d_{M}(z)^{1-2s}|\nabla \widetilde w_i|^2 dz \leq C d_{M}(p_i)^{1-2s}\int_{\widetilde B_{r_i} \cap \Omega}|\nabla \widetilde w_i|^2 dz.$$
Observe that the integral on the right-hand side does not contain weights. Moreover, we recall that the extension $\widetilde w_i$ is defined as for the case $s=1/2$. Thus, applying the extension result given in \cite{C-Cinti1} for $s=1/2$, we get
\begin{eqnarray*}&&\int_{\widetilde B_{r_i} \cap \Omega} d_{M}(z)^{1-2s}|\nabla \widetilde w_i|^2 dz \\
&& \hspace{2em} \leq C||w||^2_{L^2(\partial \Omega)}+ C d_M(p_i)^{1-2s} \int\int_{B_{\rm weig}}\frac{| w_i(z)- w_i(\overline z)|^2}{|z-\overline z|^{n+1}}d\mathcal H^n(z) d\mathcal H^n({\overline z}) \\
&& \hspace{2em} \leq  C||w||^2_{L^2(\partial \Omega)}+  C \int\int_{B_{\rm weig}}d_M(z)^{1-2s}\frac{| w_i(z)- w_i(\overline z)|^2}{|z-\overline z|^{n+1}}d\mathcal H^n(z) d\mathcal H^n({\overline z}),\end{eqnarray*} where $B_{\rm frac}$ and $B_{\rm weig}$ are defined as in \eqref{Bfrac} and \eqref{Bweig} with $A=\partial \Omega$.
This concludes the proof of the claim.
\end{proof}

We give now the proof of Theorem \ref{key-s}.

\begin{proof}[Proof of Theorem \ref{key-s}] Let $s\in(0,1).$ When $s=1/2$ the theorem was proved in our previous work \cite{C-Cinti1}, Theorem 1.5. 
Here we will prove it for $0<s<1/2$ and $1/2<s<1$. 
The proof differs in each of these two cases, they also differ from the one for $s=1/2$ in which there is a $|\log \varepsilon|$ appearing in the final bound.

\textbf{Step 1}. Suppose that $A=Q_1=\{x\in \re^n:|x|<1\}$ is the unit
cube in $\re^n$. 
 Let, as before, $(x',x_n)\in \re^n$. As in the proof of Theorem \ref{extension-s}, we consider $M=Q_1^-=\{x \in Q_1 : x_n<0\}$ and
$\Gamma=\{x \in Q_1 : x_n=0\}$.
In the following computations $C$ will denote different positive constants which depend on $n$, $s$, and $c_s$. 

\textbf{Case $0<s<1/2$}. By hypothesis $|w(x)|\leq c_s$ and by \eqref{grad-1A} we have
\begin{equation}\label{bound-s1}
\displaystyle |D w(x)|\leq \begin{cases}
\displaystyle \frac{c_s}{\varepsilon}\left(\frac{d_{\Gamma}(x)}{\varepsilon}\right)^{2s-1}=\frac{c_s}{\varepsilon}\left(\frac{|x_n|}{\varepsilon}\right)^{2s-1}& \mbox{in}\:\:A\cap
|x_n|\leq \varepsilon\}\\
\displaystyle \frac{c_s}{d_{\Gamma}(x)}=\frac{c_s}{|x_n|} & \mbox{in}\:\:A\cap\{|x_n| > \varepsilon\}.\end{cases}\end{equation}

Let $Q_1^+=\{x\in Q_1 : x_n>0\}$. Following \eqref{Bfrac} and \eqref{Bweig} we must consider the quantity
\begin{eqnarray*}
&&I:=\int_{Q_1}\int_{Q_1}\frac{|w(x)-w(\overline{x})|^2}{|x-\overline{x}|^{n+2s}}dx
d\overline{x}\\
&&\hspace{2em}+\int_{Q_1^+}\int_{Q_1^+}(x_n)_+^{1-2s}\frac{|w(x)-w(\overline{x})|^2}
{|x-\overline{x}|^{n+1}}dx d\overline{x}.\end{eqnarray*}
Since hypothesis \eqref{bound-s1} is symmetric in $x_n$ and $-x_n$, we simply bound $I$ by
\begin{eqnarray}\label{I}
I\leq \int_{Q_1}\int_{Q_1}\frac{|w(x)-w(\overline{x})|^2}{|x-\overline{x}|^{n+2s}}dx
d\overline{x}+\int_{Q_1}\int_{Q_1}|x_n|^{1-2s}\frac{|w(x)-w(\overline{x})|^2}
{|x-\overline{x}|^{n+1}}dx d\overline{x}.\end{eqnarray}
Observe
that, in the set $\{|x-\overline{x}|<|x_n|/2\}$, we have
$$\frac{|w(x)-w(\overline{x})|^2}{|x-\overline{x}|^{n+2s}}\leq 2^{2s-1} |x_n|^{1-2s}\frac{|w(x)-w(\overline{x})|^2}
{|x-\overline{x}|^{n+1}},$$ while the reverse inequality holds in  $\{|x-\overline{x}|\geq |x_n|/2\}$.
We deduce that
\begin{equation}\label{dis1-s}
\begin{array}{ll}\displaystyle  I \leq C\int_{Q_1}\int_{Q_1\cap \{\overline x:|x-\overline x|>|x_n|/2\}}\frac{|w(x)-w(\overline{x})|^2}
{|x-\overline{x}|^{n+2s}}dx d\overline{x}\\
\displaystyle \hspace{1em} +C\int_{Q_1}\int_{Q_1\cap \{\overline x:|x-\overline x|<|x_n|/2\}}|x_n|^{1-2s}\frac{|w(x)-w(\overline{x})|^2}
{|x-\overline{x}|^{n+1}}dx d\overline{x}=:I_1+I_2.\end{array}
\end{equation}

We bound $I_1$ using the $L^{\infty}$ estimate for $w$ and spherical coordinates centered at $x$:
\begin{eqnarray}\label{dis2-s}
&&\int_{Q_1}\int_{Q_1\cap\{\overline{x}:|x-\overline{x}|>|x_n|/2\}}\frac{|w(x)-w(\overline{x})|^2}{|x-\overline{x}|^{n+2s}}dx
d\overline{x}
\leq C\int_{Q_1}dx\int_{|x_n|/2}^{2\sqrt n}dr \frac{1}{r^{2s+1}}\nonumber \\
&&\hspace{3em}\leq
C\int_{-1}^1\frac{1}{|x_n|^{2s}}dx_n\leq C.\end{eqnarray}
Next, we consider $I_2$. 
By symmetry between $x_n$ and $-x_n$ we can suppose $x\in Q_1^+$. Using the gradient bounds \eqref{bound-s1} 
for $w$ and spherical coordinates centered at $x$,  we get
\begin{eqnarray*}
&&I_2\leq C \int_{Q_1^+}\int_{Q_1\cap\{\overline{x}:|x-\overline{x}|<x_n/2\}}x_n^{1-2s}\frac{|w(x)-w(\overline{x})|^2}
{|x-\overline{x}|^{n+1}}dx d\overline{x}\nonumber \\
&&\hspace{1em}\leq
C\int_0^{\varepsilon/2}dx_n x_n^{1-2s}\int_0^{x_n/2}dr\frac{1}{\varepsilon^2}\left(\frac{|y_n(x,\overline x)|}{\varepsilon}\right)^{2(2s-1)}\nonumber \\
&& \hspace{1.5em}+
C\int_{\varepsilon/2}^1 dx_n
x_n^{1-2s}\int_0^{x_n/2}dr\frac{1}{y_n^2(x,\overline x)},
\end{eqnarray*}
where $y(x,\overline x)\in B_{x_n/2}(x)$ is a point of the segment joining $x$ and $\overline x$. In the first integral in the last bound, we have used that $y_n(x,\overline x) \leq x_n+x_n/2 \leq 2x_n \leq \varepsilon$. In the second integral we have used that, in case $y_n(x,\overline x) \leq \varepsilon$, $\frac{C}{\varepsilon}\left(\frac{y_n(x,\overline x)}{\varepsilon}\right)^{2s-1}\leq \frac{C}{y_n(x,\overline x)}$ in \eqref{bound-s1}.

Since $y(x,\overline x)\in B_{x_n/2}(x)$, we have that $y_n(x,\overline x)\geq x_n/2$ and thus we deduce that
\begin{eqnarray}\label{dis3-s}
&&\int_{Q_1^+}\int_{\{\overline{x}\in
Q_1:|x-\overline{x}|<x_n/2\}} x_n^{1-2s}\frac{|w(x)-w(\overline{x})|^2}
{|x-\overline{x}|^{n+1}}dx d\overline{x}\nonumber \\
&&\hspace{1em}\leq
C\int_0^{\varepsilon/2}dx_n x_n^{1-2s}\int_0^{x_n/2}dr\frac{1}{\varepsilon^2}\left(\frac{x_n}{\varepsilon}\right)^{2(2s-1)}+
C\int_{\varepsilon/2}^1 dx_n
x_n^{1-2s}\int_0^{x_n/2}dr\frac{1}{x_n^2}\nonumber \\
&&\hspace{1em}\leq C\frac{1}{\varepsilon^{4s}}\int_0^{\varepsilon/2}x_n^{2s}dx_n+C
\int_{\varepsilon/2}^1 x_n^{-2s}dx_n\nonumber\\
&&\hspace{1em}\leq C\varepsilon^{1-2s}+C\int_{\varepsilon/2}^1 x_n^{-2s}dx_n\leq C\int_\varepsilon^1 x_n^{-2s}dx_n.
\end{eqnarray}
Using \eqref{dis1-s}, \eqref{dis2-s}, and \eqref{dis3-s}, we conclude that $$I \leq C\int_\varepsilon^1x_n^{-2s}dx_n.$$

\textbf{Case} $1/2<s<1$.
By \eqref{Bfrac} and \eqref{Bweig} we must  
consider now the quantity
\begin{equation*}
\widetilde I:=\int_{Q_1^-}\int_{Q_1^-}\frac{|w(x)-w(\overline{x})|^2}
{|x-\overline{x}|^{n+2s}}dx
d\overline{x}
+\int_{Q_1^+}\int_{Q_1} \;x_n^{1-2s}\frac{|w(x)-w(\overline{x})|^2}
{|x-\overline{x}|^{n+1}}dx d\overline{x}.\end{equation*} 
We recall that in this case, by \eqref{grad-2M} we have
\begin{equation}\label{grad_bound-s}
|D w(x)|\leq\begin{cases} 
\displaystyle \frac{c_s}{\varepsilon} & \mbox{in}\:  A\cap \{|x_n|\leq \varepsilon\}  \\
\displaystyle \frac{c_s}{|x_n|}&\mbox{in}\: A\cap \{|x_n|> \varepsilon\} . 
\end{cases}\end{equation}

We have that
\begin{eqnarray*}
 \widetilde I&\leq& \int_{Q_1}\int_{Q_1}\frac{|w(x)-w(\overline{x})|^2}
{|x-\overline{x}|^{n+2s}}dx
d\overline{x}\\
&&\hspace{0.5em}+\int_{Q_1}\int_{Q_1} \;|x_n|^{1-2s}\frac{|w(x)-w(\overline{x})|^2}
{|x-\overline{x}|^{n+1}}dx d\overline{x}\\
&\leq &  C \int_{Q_1}\int_{Q_1\cap\{\overline x:|x-\overline x|\leq |x_n|/2\}}\frac{|w(x)-w(\overline{x})|^2}
{|x-\overline{x}|^{n+2s}}dx d\overline{x} \\
&&\hspace{0.5em}+  C\int_{Q_1}\int_{Q_1\cap\{\overline x:|x-\overline x|> |x_n|/2\}} |x_n|^{1-2s}\frac{|w(x)-w(\overline{x})|^2}
{|x-\overline{x}|^{n+1}}dx
d\overline{x}\\
&\leq &  C \int_{Q_1}\int_{Q_1\cap\{\overline x:|x-\overline x|\leq |x_n|/2\}}\frac{|w(x)-w(\overline{x})|^2}
{|x-\overline{x}|^{n+2s}}dx d\overline{x} \\
&&\hspace{0.5em} + C\int_{Q_1}\int_{Q_1\cap\{\overline x: |x_n|/2< |x-\overline x|\leq \max\{\varepsilon/2,|x_n|/2\}\}} |x_n|^{1-2s}\frac{|w(x)-w(\overline{x})|^2}
{|x-\overline{x}|^{n+1}}dx
d\overline{x}\\
&&\hspace{0.5em} + C\int_{Q_1}\int_{Q_1\cap\{\overline x:|x-\overline x|\geq \max\{\varepsilon/2,|x_n|/2\}\}} |x_n|^{1-2s}\frac{|w(x)-w(\overline{x})|^2}
{|x-\overline{x}|^{n+1}}dx
d\overline{x}\\
&=&\widetilde I_1+\widetilde I_2+\widetilde I_3.
\end{eqnarray*}

We first bound $\widetilde I_1$.   We have
$$\widetilde I_1\leq C \int_{Q_1}\int_{Q_1\cap\{\overline x:|x-\overline x|\leq |x_n|/2\}}\frac{|Dw(y(x,\overline x))|^2}{|x-\overline x|^{n-2+2s}}dx d\overline x,$$
where $y(x,\overline x)\in B_{|x_n|/2}(x)$ is a point of the segment joining $x$ and $\overline x$. Now, 
the gradient bound \eqref{grad_bound-s} reads $|Dw(y)|\leq c_s\min\{\varepsilon^{-1},|y_n|^{-1}\}$ for a.e. $y \in Q_1$.  
Since $y(x,\overline x)\in B_{|x_n|/2}(x)$, we have $|y_n(x,\overline x)|\geq |x_n|/2$ and
$|Dw(y(x,\overline x))|\leq c_s\min\{\varepsilon^{-1},|y_n(x,\overline x)|^{-1}\}\leq c_s\min\{\varepsilon^{-1},2|x_n|^{-1}\}$. 
Using spherical coordinates centered at $x$, we get
\begin{eqnarray*}
 \widetilde I_1 &\leq& C\int_{Q_1}dx\int_0^{|x_n|/2}dr r^{1-2s}\min\left\{\frac{1}{\varepsilon^2},\frac{1}{|x_n|^2}\right\}\\
&\leq &C \int_{-1}^{1}dx_n \min\left\{\frac{1}{\varepsilon^2},\frac{1}{|x_n|^2}\right\} |x_n|^{2-2s}\\
&\leq& C \int_0^{\varepsilon}\frac{1}{\varepsilon^2}x_n^{2-2s} dx_n + C\int_{\varepsilon}^1 x_n^{-2s} dx_n\leq C\int_{\varepsilon}^1 x_n^{-2s} dx_n.
\end{eqnarray*}

Consider now $\widetilde I_2$. Here $|x_n|< \varepsilon$ (if not $\{|x_n|/2< \max\{|x_n|/2,\varepsilon/2\}\}=\emptyset$). Using that $\widetilde I_2$ is symmetric in $x_n$ and $-x_n$, the gradient bound \eqref{grad_bound-s} and spherical coordinates centered at $x$ as for $\widetilde I_1$, we get
$$\widetilde I_2\leq C\int_0^{\varepsilon}dx_n x_n^{1-2s}\int_{x_n/2}^{\varepsilon/2} dr \frac{1}{\varepsilon^2}\leq  C\varepsilon^{1-2s}.$$

Finally, using that $|w|\leq c_s$ in $Q_1$ and spherical coordinates centered at $x$, we get the following bound for $\widetilde I_3$:
\begin{eqnarray*}
\widetilde I_3 &\leq& C\int_{Q_1}dx_n |x_n|^{1-2s}\int_{\max\{|x_n|/2,\varepsilon/2\}}^{2\sqrt n} dr \frac{1}{r^2}\\
&\leq& C\int_{-1}^1 dx_n |x_n|^{1-2s}\min\left\{\frac{1}{|x_n|},\frac{1}{\varepsilon}\right\}\\
&\leq& C\int_0^{\varepsilon}\frac{1}{\varepsilon}x_n^{1-2s}dx_n+C\int_{\varepsilon}^1 x_n^{-2s}dx_n\\
&\leq& C\varepsilon^{1-2s}+C\int_{\varepsilon}^1 x_n^{-2s}dx_n\leq C\int_{\varepsilon}^1 x_n^{-2s}dx_n.
\end{eqnarray*}
We conclude that $$\widetilde I\leq C\int_{\varepsilon}^1 x_n^{-2s}dx_n.$$

\textbf{Step 2.} Suppose now that $A$ is a Lipschitz subset of
$\re^n$ or $A=\partial \Omega$, where $\Omega$ is an open bounded
subset of $\re^{n+1}$ with Lipschitz boundary.

We consider a finite open covering
$\{A_{r_i/2}\}_{i=1,...,m}=\{B_{r_i/2}\cap A\}_{i=1,...,m}$,
where now $B_{r_i/2}$ is the ball centered at $p_i$ (as in the proof of Theorem \ref{extension-s}, case 2) but of radius $r_i/2$.
Here, for sake of simplicity, $B_{r_i}$ denotes both the ball in
$\re^n$ or $\re^{n+1}$.

Let $\overline \Gamma$ be the closure of $\Gamma$ in $\re^n$ or $\re^{n+1}$.
Only in the case $A\subset \re^n$, it may happen that $\overline \Gamma \setminus \Gamma \neq \emptyset$. In such case, for $p_i\in \overline \Gamma \setminus \Gamma$, there exists a radius $r_i$ and a bilipschitz diffeomorphism $\varphi_i: B_{r_i}(p_i)\rightarrow (-3,1)\times (-1,1)^{n-1}$ such that $\varphi_{i}(p_i)=(-1,0,...,0)$ and $\varphi_i$ satisfies properties \eqref{phi1-s}, \eqref{phi2-s}, and \eqref{phi3-s}.
We set $\varepsilon_0=\min\{r_i/2,1/2\}$.

If $z$ and $\overline z$ are two points belonging to $A$ such that
$|z-\overline z|<\varepsilon_0$, then there exists a set
$A_{r_i}=B_{r_i}\cap A$ such that both $z$ and $\overline z$
belong to $A_{r_i}$. Hence
$$\{(z,\overline z)\in A \times A:
|z-\overline z|<\varepsilon_0\} \subset \displaystyle
\bigcup_{i=1}^m A_{r_i}\times A_{r_i}.$$ Let $L>1$ be a bound
for the Lipschitz constants of all functions
$\varphi_{1},...,\varphi_{m},\varphi_{1}^{-1},...,\varphi_{m}^{-1}.$
Let us first treat the case $0< \varepsilon \leq 1/(2L)$.

We write
\begin{eqnarray*}
&&\int\int_{B_{\rm frac}} \frac{|w(z)-w(\overline z)|^2}{|z-\overline z|^{n+2s}}d\mathcal H^n(z) d\mathcal H^n({\overline z})\\
&&\hspace {1em}+\int\int_{B_{\rm weig}} d_M(z)^{1-2s}\frac{|w(z)-w(\overline z)|^2}{|z-\overline z|^{n+1}}d\mathcal H^n(z) d\mathcal H^n({\overline z})\\
&&\hspace {2em}=\int\int_{B_{\rm frac}\cap\{\overline z:|z-\overline z|<\varepsilon_0\}} \frac{|w(z)-w(\overline z)|^2}{|z-\overline z|^{n+2s}}d\mathcal H^n(z) d\mathcal H^n({\overline z})\\
&&\hspace {3em}+\int\int_{B_{\rm frac}\cap\{\overline z:|z-\overline z|>\varepsilon_0\}} \frac{|w(z)-w(\overline z)|^2}{|z-\overline z|^{n+2s}}d\mathcal H^n(z) d\mathcal H^n({\overline z})\\
&&\hspace {3em}+ \int\int_{B_{\rm weig}\cap\{\overline z:|z-\overline z|<\varepsilon_0\}} d_M(z)^{1-2s}\frac{|w(z)-w(\overline z)|^2}{|z-\overline z|^{n+1}}d\mathcal H^n(z) d\mathcal H^n({\overline z})\\
&&\hspace {3em} +\int\int_{B_{\rm weig}\cap\{\overline z:|z-\overline
z|>\varepsilon_0\}} d_M(z)^{1-2s}\frac{|w(z)-w(\overline
z)|^2}{|z-\overline z|^{n+1}}d\mathcal H^n(z) d\mathcal H^n({\overline z}).
\end{eqnarray*}
Since $w$ is bounded and $\int_{B_{\rm weig}}d_M(z)^{1-2s}dz\leq C$ for every
$0<s<1$, using spherical coordinates centered at $z$ as before, we have that
\begin{eqnarray*}&&\int\int_{B_{\rm frac}\cap\{\overline z:|z-\overline z|>\varepsilon_0\}} \frac{|w(z)-w(\overline z)|^2}{|z-\overline z|^{n+2s}}d\mathcal H^n(z) d\mathcal H^n({\overline z})\\
&&\hspace{1em} +\int\int_{B_{\rm weig}\cap\{\overline z:|z-\overline z|>\varepsilon_0\}} d_M(z)^{1-2s}\frac{|w(z)-
w(\overline z)|^2}{|z-\overline z|^{n+1}}d\mathcal H^n(z) d\mathcal H^n({\overline z})\leq C.
\end{eqnarray*}

On the other hand, by the previous consideration,
\begin{eqnarray*}
&&\int\int_{B_{\rm frac}\cap\{\overline z:|z-\overline z|<\varepsilon_0\}} \frac{|w(z)-w(\overline z)|^2}{|z-\overline z|^{n+2s}}d\mathcal H^n(z) d\mathcal H^n({\overline z})\\
&&\hspace{3em}+\int\int_{B_{\rm weig}\cap\{\overline z:|z-\overline z|<\varepsilon_0\}} d_M(z)^{1-2s}\frac{|w(z)-w(\overline z)|^2}{|z-\overline z|^{n+1}}d\mathcal H^n(z) d\mathcal H^n({\overline z})\\
&&\hspace{2em} \leq  \sum_{i=1}^m \int\int_{B_{\rm frac} \cap (A_{r_i}\times A_{r_i})}\frac{|w(z)-w(\overline z)|^2}{|z-\overline z|^{n+2s}}d\mathcal H^n(z) d\mathcal H^n({\overline z})\\
&&\hspace{3em} +\sum_{i=1}^m\int\int_{B_{\rm weig} \cap (A_{r_i}\times
A_{r_i})}d_M(z)^{1-2s}\frac{|w(z)-w(\overline z)|^2}{|z-\overline
z|^{n+1}}d\mathcal H^n(z) d\mathcal H^n({\overline z}).
\end{eqnarray*}

If $A_{r_i}\cap \Gamma \neq \emptyset$ or $A_{r_i}\subset M$ then, by the construction
of the open covering $\{A_{r_i}\}$, there exists a bilipschitz map
$\varphi_i:B_{r_i}\rightarrow Q_1 \times (-1,1)$ such that
$\varphi_i(A_{r_i})=Q_1$. Moreover, if $A_{r_i}\cap \Gamma \neq \emptyset$, we have also $\varphi_i(A_{r_i}\cap M)=\{x\in
Q_1:x_n<0\}$. We use the bilipschitz map $\varphi_i$ to flatten
the sets $B_{\rm frac} \cap (A_{r_i}\times A_{r_i})$ and $B_{\rm weig} \cap
(A_{r_i}\times A_{r_i})$, and we set $v_i=w\circ \varphi_i^{-1}$.
Given $x \in Q_1$, let $y=\varphi_i^{-1}(x) \in A_{r_i}$.
Recalling that $L>1$ is a bound for the Lipschitz constants of all functions
$\varphi_{1},...,\varphi_{m},\varphi_{1}^{-1},...,\varphi_{m}^{-1}$, we
have that $(1/L)d_{\Gamma}(y)\leq |x_n|\leq Ld_{\Gamma}(y)$ and $|Dv_i(x)|\leq L|Dw(y)|$. Therefore the gradient estimates \eqref{grad-1A} and \eqref{grad-2M} lead to the following bounds for $|Dv_i|$:

\begin{itemize}
\item for every $s\in (0,1/2]$,
\begin{eqnarray*}
|Dv_i(x)|\leq L|Dw(y)|&\leq& \begin{cases}
\displaystyle L\frac{c_s}{\varepsilon}\left(\frac{d_\Gamma(y)}{\varepsilon}\right)^{2s-1} & \mbox{if} \; y\in A\;\;\mbox{and}\;\;d_\Gamma(y)\leq \varepsilon\\
\displaystyle L\frac{c_s}{d_\Gamma(y)}& \mbox{if} \; y\in A\;\;\mbox{and}\;\;d_\Gamma(y)>\varepsilon\end{cases}\\
&\leq& \begin{cases}
\displaystyle L\frac{c_s}{\varepsilon}\left(\frac{|x_n|}{L\varepsilon}\right)^{2s-1} & \mbox{if} \; x\in Q_1\;\;\mbox{and}\;\;|x_n|\leq L\varepsilon\\
\displaystyle L^2\frac{c_s}{|x_n|}& \mbox{if} \; x\in Q_1\;\;\mbox{and}\;\;|x_n|> L\varepsilon\end{cases}
\end{eqnarray*}
\item for every $s\in (1/2,1)$,
\begin{eqnarray*}
|Dv_i(x)|\leq L|Dw(y)|&\leq& \begin{cases}
\displaystyle L\frac{c_s}{\varepsilon} & \mbox{if} \; y\in A\;\;\mbox{and}\;\;d_\Gamma(y)\leq\varepsilon\\
\displaystyle L\frac{c_s}{d_\Gamma(y)}& \mbox{if} \; y\in A\;\;\mbox{and}\;\;d_\Gamma(y)>\varepsilon\end{cases}\\
&\leq& \begin{cases}
\displaystyle L\frac{c_s}{\varepsilon} & \mbox{if} \; x\in Q_1\;\;\mbox{and}\;\;|x_n|\leq L\varepsilon\\
\displaystyle L^2\frac{c_s}{|x_n|}& \mbox{if} \; x\in Q_1\;\;\mbox{and}\;\;|x_n|> L\varepsilon.\end{cases}
\end{eqnarray*}
\end{itemize}

Thus we can apply the result proven in Step 1, with $\varepsilon$
replaced by $\varepsilon L$ (note that we have $\varepsilon L \leq
1/2$, as in Step 1), to the function $v_i/(1+L^2)$. Using the Lipschitz property of $\varphi^{-1}$,
we restate the conclusion for $w$ and we get
\begin{eqnarray*}
&&\hspace{-0.4em}\int\int_{B_{\rm frac} \cap (A_{r_i}\times A_{r_i})}\frac{|w(z)-w(\overline z)|^2}{|z-\overline z|^{n+2s}}d\mathcal H^n(z) d\mathcal H^n({\overline z})\\
&&+\int\int_{B_{\rm weig} \cap (A_{r_i}\times A_{r_i})}d_M(z)^{1-2s}\frac{|w(z)-w(\overline z)|^2}{|z-\overline z|^{n+1}}d\mathcal H^n(z) d\mathcal H^n({\overline z})\leq C\int_{\varepsilon}^1\rho^{-2s}d\rho.
\end{eqnarray*}

Last, we consider the case $A_{r_i}\cap M= \emptyset$. We recall that, in this case $\displaystyle r_i=\frac{1}{3}d_M(p_i)$, 
where $r_i$ and $p_i$ are respectively the radius and the center of the ball $B_{r_i}$. Then, for every $z \in A_{r_i}$, 
we have that $d_\Gamma(z)\geq d_M(z)\geq r_i\geq \varepsilon_0 $ and thus we have that $\displaystyle |D w(z)|\leq  
\frac{c_s}{\varepsilon_0}.$

Using this gradient bound to have $|Dw(y(z,\overline z))|\leq C$, where $y(z,\overline z)$ is a point of 
the segment joining $z$ and $\overline z$, and using also spherical coordinates, we get
\begin{eqnarray*}
&&\int\int_{B_{\rm frac} \cap (A_{r_i}\times A_{r_i})}\frac{|w(z)-w(\overline z)|^2}
{|z-\overline z|^{n+2s}}d\mathcal H^n(z) d\mathcal H^n({\overline z})\\
&&\hspace{4em}+\int\int_{B_{\rm weig} \cap (A_{r_i}\times A_{r_i})}d_M(z)^{1-2s}\frac{|w(z)-w(\overline z)|^2}
{|z-\overline z|^{n+1}}d\mathcal H^n(z) d\mathcal H^n({\overline z})\\
&&\hspace{3em}\leq C \int_{A_{r_i}}d\mathcal H^n(z)
\int_0^{r_i}dr\,\frac{1}{r^{2s-1}}+C\int_{A_{r_i}}d\mathcal H^n(z)  d_M(z)^{1-2s}\int_0^{r_i}dr\leq C.
\end{eqnarray*} 
Summing over $i=1,...,m$, we conclude
the proof in case $\varepsilon \leq 1/(2L)$.

Finally given $\varepsilon \in (0,1/2)$ with $\varepsilon >
1/(2L)$, since \eqref{grad-1A} and \eqref{grad-2M} hold with such $\varepsilon$,
they also hold with $\varepsilon$ replaced by $1/(2L)$. By the
previous proof with $\varepsilon$ taken to be $1/(2L)$, the energy
is bounded by
$$C\int_{1/(2L)}^1\rho^{-2s}d\rho\leq C\leq C\int_{\varepsilon}^1\rho^{-2s}d\rho.$$
\end{proof}

\section{Energy estimate for global minimizers}

In this section we give the proof of Theorem \ref{energy-est-s},
which is based on a comparison argument. Let $v$ be a global
minimizer of $\eqref{eq2-s}$. The proof consists of 3 steps:
\begin{enumerate}
\item [i)] solving a Dirichlet problem, we construct an appropriate bounded comparison function $\overline{w}$ which
takes the same values as $v$ on $\partial C_R \cap
\{\lambda>0\}$ and thus, by minimality of $v$,
$$\mathcal E_{s,C_R}(v)\leq \mathcal E_{s,C_R}(\overline{w});$$
\item [ii)] we apply the extension Theorem \ref{extension-s} in the
cylinder of radius $R$ and height $R$ to deduce
\begin{equation}\label{estimate_C1-s}
\begin{split}
&\int_{C_R}\lambda^{1-2s}|\nabla \overline{w}|^2dx d\lambda \leq C||w||^2_{L^2(\partial C_R)}+\\
&\hspace{6em}+C
\int\int_{B_{\rm frac}}\frac{|w(z)-w(\overline{z})|^2}{|z-\overline{z}|^{n+2s}}d\mathcal H^n(z)d\mathcal H^n({\overline{z}}) \\
&\hspace{6em}+
C \int\int_{B_{\rm weig}}
\lambda^{1-2s}\frac{|w(z)-w(\overline{z})|^2}{|z-\overline{z}|^{n+1}}d\mathcal H^n(z)d\mathcal H^n({\overline{z}}),\end{split}\end{equation}
where $z\in \partial C_R$, $w$ is the trace of $\overline{w}$ on
$\partial C_R$ and $B_{\rm frac}$ and $B_{\rm weig}$ are defined as in \eqref{Bfrac}
and \eqref{Bweig}, with $A=\partial C_R$ and $M=B_R \times
\{0\}.$

\item [iii)] we prove, rescaling and using Theorem \ref{key-s}, that the quantity in the right-hand side of
\eqref{estimate_C1-s} is bounded by $CR^{n-2s}\int_{1/R}^1\rho^{-2s}d\rho.$

\end{enumerate}

\begin{proof}[Proof of Theorem \ref{energy-est-s}]

Let $v$ be a bounded global minimizer of \eqref{eq2-s}. Let $u$ be
its trace on $\partial \re^{n+1}_+$. Let $\tau \in [\inf u, \sup u]$
be such that $G(\tau)=c_u$, where $c_u$ as defined as in \eqref{c_u-s}.

Throughout the proof, $C$ will denote positive constants depending
only on $n$, $s$, $||f||_{C^{1,\gamma}}$ and $||u||_{L^\infty(\re^n)}$. As
explained in \eqref{grad-s}, $v$ satisfies the following bounds:
\begin{equation}\label{grad-1}
|\nabla_x v(x,\lambda)|\leq c_s\:\:\mbox{for
every}\;\;x\in\re^n\;\;\mbox{and}\;\;\lambda\geq 0\end{equation}
\begin{equation}\label{grad-2}
|\nabla v(x,\lambda)|\leq \frac{c_s}{\lambda}\:\:
\mbox{for every}\;\;x\in\re^n\;\;\mbox{and}\;\;\lambda>0
\end{equation}
\begin{equation}\label{grad-3} |\lambda^{1-2s}\partial_{\lambda}v|\leq c_s \:\: \mbox{for
every}\;\;x\in\re^n\;\;\mbox{and}\;\;\lambda>0.\end{equation}
We recall (see Remark \ref{oss-grad}) that the constants $c_s$ in \eqref{grad-1} and \eqref{grad-3} are  not uniformly bounded for $s$ close to $0$ and $s$ close to $1$ respectively.

We estimate the energy $\mathcal{E}_{s,C_R}(v)$ of $v$ using a
comparison argument. We define a function $\overline
w=\overline{w}(x,\lambda)$ in $C_R$
 in the following way. First we define $\overline{w}(x,0)$
on the base of the cylinder to be equal to a smooth function
$g(x)$ which is identically equal to $\tau$ in $B_{R-1}$ and
$g(x)=v(x,0)$ for $|x|=R$. The function $g$ is defined as follows:
\begin{equation}\label{g-s}
 g=\tau \eta_R+(1-\eta_R)v(\cdot,0),
\end{equation}
where $\eta_R$ is a smooth function depending only on $r=|x|$ such
that $\eta_R\equiv 1$ in $B_{R-1}$ and $\eta_R\equiv 0$ outside $B_R$.
Then we define $\overline{w}(x,\lambda)$ as the unique solution of
the Dirichlet problem
\begin{equation}\label{eqw-s}\begin{cases}
\text{div}(\lambda^{1-2s}\nabla \overline{w})=0 & \mbox{in} \;C_R\\
\overline{w}(x,0)=g(x)& \mbox{on}\;B_R \times \{\lambda=0\}\\
 \overline{w}(x,\lambda)=v(x,\lambda)& \mbox{on}\;\partial
 C_R\cap \{\lambda>0\}.\end{cases}\end{equation}
Since $v$ is a global minimizer of $\mathcal{E}_{s,C_R}$ and
$\overline{w}=v$ on $\partial C_R\times \{\lambda>0\}$, then
\begin{eqnarray}&&d_s\int_{C_R}\frac{1}{2}\lambda^{1-2s}|\nabla v|^2dx d\lambda
+\int_{B_R}\{G(u(x,0))-c_u\}dx \nonumber \\
&& \hspace{2em}\leq d_s\int_{C_R}\frac{1}{2}\lambda^{1-2s}|\nabla
\overline{w}|^2dx d\lambda
+\int_{B_R}\{G(\overline{w}(x,0))-c_u\}dx.\nonumber
\end{eqnarray} We prove now that
$$d_s\int_{C_R}\frac{1}{2}\lambda^{1-2s}|\nabla \overline{w}|^2dx d\lambda +\int_{B_R}\{G(\overline{w}(x,0))-c_u\}dx \leq C R^{n-2s}\int_{1/R}^1\rho^{-2s}d\rho.$$

First of all, observe that the potential energy is bounded by
$CR^{n-1}$. Indeed, by definition $\overline{w}(x,0)=\tau$ in
$B_{R-1}$, and therefore

\begin{eqnarray}&&\int_{B_R} \{G(\overline{w}(x,0))-c_u\}dx=\int_{B_R\setminus
B_{R-1}}\{G(\overline{w}(x,0))-c_u\}dx\nonumber \\
&& \hspace{2em}\leq C |B_R\setminus B_{R-1}|\leq
CR^{n-1}.\end{eqnarray}

Thus, we need to bound the Dirichlet energy. First of all,
rescaling, we set
$$\overline{w}_1(x,\lambda)=\overline{w}(R{x},R{\lambda}),$$ for
 $({x},{\lambda})\in C_1=B_1\times(0,1)$. Moreover, if we set
$\varepsilon=1/R$ then
$$\overline{w}_1({x},0)=\begin{cases}
\tau &\mbox{for}\;\;|{x}|<1-\varepsilon\\
v(R{x},0)&\mbox{for}\;\;|{x}|=1.\end{cases}$$ We
observe that
$$d_s\int_{C_R}\lambda^{1-2s}|\nabla \overline{w}|^2dx d\lambda=d_sR^{n-2s}\int_{C_1}\lambda^{1-2s}|\nabla
\overline{w}_1|^2dx d\lambda.$$ Thus, it is enough to prove that
\begin{equation}\label{est-epsilon-s}d_s\int_{C_1}\lambda^{1-2s}|\nabla
\overline{w}_1|^2dx d\lambda \leq C\int_{\varepsilon}^1\rho^{-2s}d\rho.
\end{equation}
Applying Theorem \ref{extension-s} (and Remark \ref{s-ext}) with $\Omega=C_1$ and $M=B_1\times \{0\}$, we have that
\begin{eqnarray*}
&&d_s\int_{C_1}\lambda^{1-2s}|\nabla {\overline{w}_1}|^2dx d\lambda \\
&&\hspace{2em} \leq C
||\overline{w}_1||^2_{L^2(\partial C_1)} + C
\int\int_{B_{\rm frac}}\frac{|\overline{w}_1(z)-\overline{w}_1(\overline{z})|^2}{|z-\overline{z}|^{n+2s}}d\mathcal H^n(z)d\mathcal H^n({\overline{z}})\\
&&\hspace{2em}+C \int\int_{B_{\rm weig}}
d_M(z)^{1-2s}\frac{|\overline{w}_1(z)-\overline{w}_1(\overline{z})|^2}{|z-\overline{z}|^{n+1}}d\mathcal H^n(z)d\mathcal H^n(\overline{z}),\end{eqnarray*}
where $B_{\rm frac}$ and $B_{\rm weig}$ are defined as in \eqref{Bfrac} and
\eqref{Bweig} with $A=\partial C_1$ and $M= B_1 \times
\{0\}$. 

To bound the two double integrals above, we apply Theorem
\ref{key-s} to ${\overline{w}_1}_{|_{\partial C_1}}$ in $A=\partial C_1$,
taking $\Gamma=\partial B_1\times \{\lambda=0\}$. Since $|\overline{w}_1|\leq
C$, we only need to check the gradient bounds \eqref{grad-1A} and \eqref{grad-2M} in $\partial C_1$. In
the bottom boundary, $M=B_1 \times \{0\}$, this is simple. Indeed
$\overline{w}_1\equiv \tau$ in $B_{1-\varepsilon}$, and thus we need only to
control $|\nabla \overline{w}_1(x,0)|=\varepsilon^{-1}|\nabla g(Rx)|\leq
C\varepsilon^{-1}$ for $|x|>1-\varepsilon$, where $g$ is defined
in \eqref{g-s}. We have used estimate \eqref{grad-1} on $v$. Here ${d}_\Gamma(x)<\varepsilon$,
and one can deduce that \eqref{grad-1A}, \eqref{grad-2M} hold here.

Next, to verify \eqref{grad-1A} and \eqref{grad-2M} in $\partial C_1 \cap
\{\lambda>0\}$ we use that $\overline w=v$ here and we know that
$v$ satisfies \eqref{grad-1}, \eqref{grad-2}, and \eqref{grad-3}.
Thus the tangential derivatives of $\overline{w}_1$ in $\partial C_1 \cap
\{{\lambda}>0\}$ satisfy
\begin{equation}\label{e1}
|\nabla_{{x}} \overline{w}_1({x},{\lambda})|\leq
c_sR=\frac{c_s}{\varepsilon }\:\:\mbox{for
}\;\;(x,\lambda)\in
\partial C_1\cap\{\lambda >0\}\end{equation}
\begin{equation}\label{e2}
|\nabla \overline{w}_1({x},{\lambda})|\leq
\frac{c_sR}{R{\lambda}}=\frac{c_s}{{\lambda}}\:\:
\mbox{for }\;\;(x,\lambda)\in
\partial C_1\cap\{\lambda >0\}
\end{equation} and
\begin{equation}\label{e3} |{\lambda}^{1-2s}\partial_{{\lambda}}
\overline{w}_1(x,\lambda)|\leq \frac{c_sR}{R^{1-2s}}=\frac{c_s}{\varepsilon^{2s}} \:\: \mbox{for
}\;\;(x,\lambda)\in
\partial C_1\cap\{\lambda >0\}.\end{equation}

Estimate \eqref{e2} is used on the top boundary $B_1\times \{\lambda=1\}$ to verify in this set \eqref{grad-1A} and \eqref{grad-2M}.
Note that here $d_{\Gamma}((x,\lambda))$ is comparable to $\lambda=1$ up to multiplicative constants. 
\eqref{e1}, \eqref{e2}, and \eqref{e3} also lead to \eqref{grad-1A} and \eqref{grad-2M} on the lateral boundary $\partial B_1\times (0,1)$, where $d_{\Gamma}((x,\lambda))=\lambda$.
Hence,
${\overline{w}_1}_{|_{\partial C_1}}$
satisfies the hypothesis of Theorem \ref{key-s}. We conclude that the estimate for the Dirichlet energy
(\ref{est-epsilon-s}) holds.
\end{proof}

\section{Energy estimate for monotone solutions in $\re^3$}
In section 5 of \cite{C-Cinti1}, we proved two technical lemmas which
led to the energy estimate for monotone solutions (without limit assumption) and $s=1/2$ in
dimension $n=3$. Here we give analogue results but for every
fractional power $0<s<1$ of the Laplacian.

The first lemma concerns the stability property of the limit
functions
$$\underline{v}(x_1,x_2,\lambda):=\lim_{x_3 \rightarrow
-\infty}v(x,\lambda)\;\;\mbox{and}\;\;\overline{v}(x_1,x_2,\lambda):=\lim_{x_3 \rightarrow +\infty}v(x,\lambda)
,$$ and some properties of the potential
$G$ in relation with these functions. The second proposition establishes that monotone solutions
are global minimizers among a suitable class of functions which in turn allows us to apply a
comparison argument to obtain energy estimates.

\begin{lem}\label{lemma-s}
Let $f$ be a $C^{1,\gamma}$  function, for some $\gamma>\max\{0,1-2s\}$, and
$u$ be a bounded solution of equation (\ref{eq1-s}) in $\re^3$, such
that $u_{x_3}>0$. Let $v$ be the $s$-extension of $u$ in $\re^{4}_+$.

Set
$$\underline{v}(x_1,x_2,\lambda):=\lim_{x_3 \rightarrow
-\infty}v(x,\lambda)\;\;\mbox{and}\;\; \overline{v}(x_1,x_2,\lambda):=\lim_{x_3 \rightarrow +\infty}v(x,\lambda)
.$$

Then, $\underline{v}$ and $\overline{v}$ are solutions of \eqref{eq2-s} in $\re^3_+$, and each of them is either constant or
it depends only on $\lambda$ and on one Euclidian variable in the
$(x_1,x_2)-$plane. As a consequence, each $\underline{u}=\underline v(\cdot,0)$ and $\overline{u}=\overline
v(\cdot,0)$ is either constant or 1-D.

Moreover, set $m=\inf
\underline{u}\leq \widetilde m =\sup
\underline{u}$ and $\widetilde{M}=\inf \overline{u}\leq M =\sup \overline{u}$.

Then, $G >
G(\widetilde{m})=G(m)$ in $(m,\widetilde{m})$,
$G'(\widetilde{m})=G'(m)=0$ and $G > G(\widetilde{M})=G(M)$ in $(\widetilde{M},M)$,
$G'(\widetilde{M})=G'(M)=0.$
\end{lem}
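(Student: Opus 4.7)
The plan is to proceed in three stages: establish existence of the limits $\underline v,\overline v$ and verify they solve \eqref{eq2-s}, obtain 1-D symmetry from stability, and finally extract the structural conditions on $G$ at the extremal values of $\underline u$ and $\overline u$.

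First, since $u_{x_3}>0$ in $\re^3$, the derivative $v_{x_3}$ solves the linearized extension problem
\[
\operatorname{div}(\lambda^{1-2s}\nabla v_{x_3})=0 \text{ in }\re^4_+,\qquad -d_s\lim_{\lambda\to 0}\lambda^{1-2s}\partial_\lambda v_{x_3}=f'(u)\,v_{x_3} \text{ on }\re^3,
\]
with positive trace $u_{x_3}$, so the strong maximum principle for the weighted operator from \cite{FKS} gives $v_{x_3}>0$ in all of $\re^4_+$. Hence $v$ is bounded and monotone in $x_3$, so the pointwise limits $\underline v,\overline v$ exist. The uniform estimates \eqref{grad-s} together with the $C^\beta(\overline{\re^{n+1}_+})$ regularity of \cite{C-Si1} make the family of translates $\{v(\cdot,\cdot,x_3+t,\cdot)\}_{t\in\re}$ precompact in $C^1_{\rm loc}(\{\lambda>0\})\cap C^0_{\rm loc}(\overline{\re^4_+})$; passing to the limit in \eqref{eq2-s} shows $\underline v,\overline v$ are solutions, and being $x_3$-independent, they may be regarded as solutions in $\re^3_+$ whose traces $\underline u,\overline u$ are $C^{2,\beta}(\re^2)$ solutions of \eqref{eq1-s}.

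Second, the positivity of the solution $v_{x_3}$ of the linearized problem is equivalent to stability of $v$, a property preserved by locally uniform convergence, so $\underline v,\overline v$ are stable. Hence $\underline u,\overline u$ are bounded stable solutions of \eqref{eq1-s} in $\re^2$. Invoking the 1-D symmetry theorem in $\re^2$ for bounded stable solutions of Cabr\'e--Sire \cite{C-Si2} and Sire--Valdinoci \cite{SV}, each of $\underline u,\overline u$ is either constant or of the form $h(a\cdot(x_1,x_2))$ for some unit vector $a\in\re^2$ and some $h:\re\to\re$; equivalently, $\underline v,\overline v$ are each either constant or depend only on $\lambda$ and one Euclidean direction in the $(x_1,x_2)$-plane.

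Third, if $\underline u\equiv m=\widetilde m$, then $f(m)=0$ and $G'(m)=0$ directly, and the inequality on the empty interval $(m,\widetilde m)$ is vacuous. In the non-constant case, $h$ is a bounded non-constant solution of $(-\Delta)^sh=f(h)$ on $\re$ with range $[m,\widetilde m]$. I first obtain $f(m)=f(\widetilde m)=0$: take sequences $x_k^\pm\in\re$ with $h(x_k^\pm)\to m$, $\widetilde m$, translate, and extract locally uniform subsequential limits $h_\infty^\pm$; these satisfy $h_\infty^-\geq m$ with $h_\infty^-(0)=m$ (and dually for $h_\infty^+$), so the strong maximum principle for $(-\Delta)^s$ applied to the nonnegative solutions $h_\infty^--m$ and $\widetilde m-h_\infty^+$ (using that a non-strict sign $f(m)\le0$, $f(\widetilde m)\ge 0$ is already forced by the value of $(-\Delta)^s$ at the extremum) forces $h_\infty^-\equiv m$ and $h_\infty^+\equiv \widetilde m$, hence $f(m)=f(\widetilde m)=0$. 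To get $G(m)=G(\widetilde m)$ and $G(t)>G(m)$ for $t\in(m,\widetilde m)$, I apply the Hamiltonian identity of Cabr\'e--Sire \cite{C-Si1} for the $s$-extension $H(x,\lambda)$ of $h$ in $\re^2_+$: the quantity
\[
\mathcal H(x)=\int_0^{\infty}\lambda^{1-2s}\frac{d_s}{2}\bigl(|H_\lambda|^2-|H_x|^2\bigr)d\lambda-G(h(x))
\]
is independent of $x$. Evaluating $\mathcal H$ at the translated points $x_k^\pm$ and passing to the limit---the translated extensions converge to the trivial extension of the constant limits $m$, $\widetilde m$, killing the gradient term---yields $-G(m)=-G(\widetilde m)$. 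At any $x_0$ with $h(x_0)\in(m,\widetilde m)$, non-constancy of $h$ on every neighborhood of $x_0$ forces $H_\lambda\not\equiv 0$ on $\{x=x_0\}$, and the same computation delivers the strict inequality $G(h(x_0))>G(m)$. The identical argument applied to $\overline u$ gives the conclusions on $[\widetilde M,M]$.

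The main obstacle lies in the last step: rigorously justifying the passage to the limit in the Hamiltonian identity at the extremal values of $h$, which requires ruling out loss of gradient energy at $\lambda=\infty$ along the translated sequence. This is controlled by combining the pointwise bounds \eqref{grad-s} with an equi-integrability argument for the weighted gradient near $\{\lambda=\infty\}$, using that the translated extensions converge to the $s$-extension of a constant.
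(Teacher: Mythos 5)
Your first two stages coincide with the paper's proof: the limits $\underline v,\overline v$ are stable solutions of \eqref{eq2-s} in $\re^3_+$ (stability being inherited from the positivity of $v_{x_3}$ and passing to locally uniform limits), and the 1-D symmetry of their traces follows from the two-dimensional result of Cabr\'e--Sire \cite{C-Si2}. For the conditions on $G$, however, the paper simply invokes Theorem 2.2(i) of \cite{C-Si1} (the necessary conditions on $f$ for a monotone 1-D solution to exist), whereas you attempt to re-derive that result, and your re-derivation has two genuine gaps.

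First, the deduction $f(m)=f(\widetilde m)=0$ is logically backwards. At a global minimum point of $h_\infty^-$ one gets $(-\Delta)^s h_\infty^-(0)=f(m)\le 0$, with \emph{strict} inequality precisely when $h_\infty^-\not\equiv m$; so knowing $f(m)\le 0$ cannot "force $h_\infty^-\equiv m$" --- to run the strong maximum principle on $h_\infty^- - m\ge 0$ you would need the opposite sign $f(m)\ge 0$, which you never establish. The correct route is to use that the 1-D symmetry result yields a \emph{monotone} profile $h$ (since $\nabla\underline u=\varphi\,(c_1,c_2)$ with $\varphi>0$), so the translates to $-\infty$ converge to the constant $m$ and $f(m)=\lim_k(-\Delta)^s h(x_k^-)=0$ by dominated convergence; you never record the monotonicity of $h$, yet your argument needs it. Second, and more seriously, the strict inequality $G>G(m)$ in $(m,\widetilde m)$ does not follow from the Hamiltonian identity in the way you claim. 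The conserved quantity is $\frac{d_s}{2}\int_0^\infty\lambda^{1-2s}\bigl(|H_x|^2-|H_\lambda|^2\bigr)\,d\lambda-G(h(x))$ (your kinetic term has the opposite sign), and it yields
$$G(h(x_0))-G(m)=\frac{d_s}{2}\int_0^\infty\lambda^{1-2s}\bigl(|H_x|^2-|H_\lambda|^2\bigr)(x_0,\lambda)\,d\lambda .$$
The right-hand side is a \emph{difference} of two nonnegative integrals, both of which are nonzero when $h$ is non-constant; its positivity is exactly the nontrivial "Hamiltonian estimate" of \cite{C-Si1} and cannot be obtained by merely observing that $H_\lambda\not\equiv 0$ (or $H_x\not\equiv 0$) on the line $\{x=x_0\}$. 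Unless you supply a proof of that sign, the strict inequality --- which is the part of the lemma actually used later (to locate $\tau$ with $\sup\underline v\le\tau\le\inf\overline v$ in Theorem \ref{energy-dim3-s}) --- remains unproved; citing Theorem 2.2(i) of \cite{C-Si1}, as the paper does, closes the gap.
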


\begin{proof} The proof is
the same as in the case of the half-Laplacian
(see \cite{C-Cinti1}). We do not supply all details and just recall the two main steps:
\begin{enumerate}
\item show that the functions $\underline{v}$ and $\overline{v}$
are stable solutions of problem \eqref{eq2-s} in $\re^3_+$ and thus their trace in $\re^2$ is 1-D by the one-dimensional symmetry result of the first author and Sire, Theorem 2.12 of \cite{C-Si2}; \item
apply Theorem 2.2 (i) of the first author and Sire \cite{C-Si1}, which gives necessary conditions on the
nonlinearities $f$ for which there exists an increasing solution to
 \eqref{eq2-s} in dimension $n=1$. This leads to the conditions on $G$ stated at the end of the lemma.
\end{enumerate}
\end{proof}

\begin{prop}\label{monot-min-s}
Let $f$ be any $C^{1,\gamma}$ nonlinearity, for some $\gamma>\max\{0,1-2s\}$. Let $u$ be a bounded solution of (\ref{eq1-s}) in $\re^n$
such that
$u_{x_n}>0$, and let $v$ be the $s$-extension of $u$ in $\re_+^{n+1}$.

Then, \begin{eqnarray*}d_s\int_{C_R}\frac{1}{2}\lambda^a|\nabla
v(x,\lambda)|^2
dx d\lambda&+&\int_{B_R}G(v(x,0))dx \\
&&\leq d_s\int_{C_R}\frac{1}{2}\lambda^a|\nabla w(x,\lambda)|^2 dx
d\lambda+\int_{B_R}G(w(x,0))dx,\end{eqnarray*} for every $H^1(C_R,\lambda^a)$ function $w$ such that $w=v$ on $\partial^+C_R=\partial C_R \cap \{\lambda>0\}$ and $\underline{v}\leq
w\leq \overline{v}$ in $C_R$, where $\underline{v}$ and
$\overline{v}$ are defined by
$$\underline{v}(x',\lambda):=\lim_{x_n \rightarrow
-\infty}v(x',x_n,\lambda)\;\;\mbox{and}\;\; \overline{v}(x',\lambda):=\lim_{x_n \rightarrow +\infty}v(x',x_n,\lambda)
.$$
\end{prop}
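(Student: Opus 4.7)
The plan is to adapt the sliding method from the classical setting (see \cite{AC} and \cite{C-Si2}) to show that the monotone solution $v$ minimizes the energy $\mathcal{E}_{s, C_R}$ over the order-bounded class of competitors. The key structural fact is that the horizontal translates $v^t(x', x_n, \lambda) := v(x', x_n + t, \lambda)$, $t \in \mathbb{R}$, form a monotone family of solutions of \eqref{eq2-s} with $v^t \nearrow \overline v$ as $t \to +\infty$ and $v^t \searrow \underline v$ as $t \to -\infty$ by Lemma \ref{lemma-s}, and satisfy $\partial_t v^t = v_{x_n} > 0$ in $\overline{\re_+^{n+1}}$.

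First I would reduce to the two cases $w \geq v$ and $w \leq v$ via the lattice identity. For any admissible $w$, the functions $\max(w,v)$ and $\min(w,v)$ are also admissible, and the pointwise identities
\[
|\nabla(w \vee v)|^2 + |\nabla(w \wedge v)|^2 = |\nabla w|^2 + |\nabla v|^2, \qquad G(w \vee v) + G(w \wedge v) = G(w) + G(v)
\]
(both valid a.e.\ for $H^1$-functions) yield
\[
\mathcal E_{s, C_R}(\max(w,v)) + \mathcal E_{s, C_R}(\min(w,v)) = \mathcal E_{s, C_R}(w) + \mathcal E_{s, C_R}(v).
\]
Hence it suffices to prove the inequality separately for $v \leq w \leq \overline v$ and for $\underline v \leq w \leq v$. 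These two cases are symmetric (swap the direction of monotonicity by considering $-x_n$), so I would focus on the first.

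Assuming $v \leq w \leq \overline v$, define the clamped family $w_t := \min(w, v^t)$ for $t \geq 0$. Since $w \geq v = v^0$, we have $w_0 = v$, and since $v^t \nearrow \overline v \geq w$ pointwise, $w_t \to w$ a.e.\ in $C_R$ as $t \to \infty$. The lateral boundary condition $w_t = v$ on $\partial^+ C_R$ holds because $v_{x_n} > 0$ forces $v^t > v = w$ there for every $t > 0$. A dominated convergence argument, using $|\nabla w_t| \leq |\nabla w| + |\nabla v^t|$ with both gradients in $L^2(C_R, \lambda^a)$ by the estimates of Remark \ref{oss-grad}, gives $\mathcal E_{s, C_R}(w_t) \to \mathcal E_{s, C_R}(w)$. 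It therefore remains to show
\[
\mathcal E_{s, C_R}(v) \leq \mathcal E_{s, C_R}(w_t) \qquad \text{for every } t > 0.
\]

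The last inequality is where I expect the main difficulty. The natural strategy is to treat it as an obstacle problem: $w_t$ satisfies $v \leq w_t \leq v^t$ with $w_t = v$ on $\partial^+ C_R$, and both $v$ and $v^t$ are classical solutions of \eqref{eq2-s}. The plan is to show that $v$ is a minimizer of $\mathcal E_{s, C_R}$ in the class $\{z \in H^1(C_R, \lambda^a) : z = v \text{ on } \partial^+ C_R, \; v \leq z \leq v^t\}$, so that testing with $z = w_t$ gives the sought bound. To establish this obstacle-minimality, one can truncate $v$ by its own solution structure: since $v$ lies strictly below the upper obstacle $v^t$ (as $v^t > v$ by strict monotonicity) and coincides with the lower obstacle, any candidate minimizer $z^*$ must agree with $v$ on a set touching the lower obstacle and satisfy the Euler--Lagrange equation in the complement; the comparison principle for the semilinear equation \eqref{eq2-s} (valid by the $C^{1,\gamma}$ regularity of $f$ and the $A_2$-Muckenhoupt theory of \cite{FKS}) then forces $z^* \equiv v$. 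The technical subtlety is to verify the comparison principle in this weighted degenerate-elliptic setting on the free boundary $\{w = v^t\}$, and to ensure that the passage to the limit $t \to \infty$ is not spoiled by the fact that $w$ may equal $\overline v$ on a set of positive measure; Lemma \ref{lemma-s}, which identifies $\overline v$ as a genuine (stable) solution of \eqref{eq2-s}, is what makes this limit procedure compatible with the energy convergence.
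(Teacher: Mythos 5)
Your proposal assembles several of the right ingredients (the sliding family $v^t$, reduction to minimization over an order interval between solutions), but the step that carries all the weight is wrong as stated. You claim that once the constrained minimizer $z^*$ of $\mathcal E_{s,C_R}$ in $\{v\leq z\leq v^t,\ z=v\ \text{on}\ \partial^+C_R\}$ satisfies the Euler--Lagrange equation off the contact set, ``the comparison principle for the semilinear equation \eqref{eq2-s} (valid by the $C^{1,\gamma}$ regularity of $f$ \dots) forces $z^*\equiv v$.'' No such comparison principle holds for a general $C^{1,\gamma}$ nonlinearity: the mixed Dirichlet--Neumann problem \eqref{eq2-ball-s} does not admit comparison between an arbitrary sub- and supersolution (think of $f'$ large and positive, where the linearized problem has negative eigenvalues). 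Uniqueness in the order interval is precisely the hard point, and the paper obtains it not from a comparison principle but from the sliding argument of Lemma 3.1 of \cite{C-SM} / Lemma 5.1 of \cite{C-Si2}: one slides the whole family $v^\tau$ down from $\tau=+\infty$, and at the first touching value $\tau_0>0$ the strong maximum principle and Hopf's lemma force $z^*\equiv v^{\tau_0}$, contradicting $z^*=v<v^{\tau_0}$ on $\partial^+C_R$. Your sketch assumes the conclusion of this lemma.

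There is a second, related gap: for the constrained minimizer to be an honest solution of \eqref{eq2-ball-s} (so that uniqueness applies to it), one needs the obstacles to be \emph{strict} sub- and supersolutions, so that the contact sets are empty. In the paper's setup the obstacles are $\underline v$ and $\overline v$, which are strict because they solve the problem in $\re^3_+$ but lie strictly below/above $v$ on $\partial^+C_R$. In your setup the lower obstacle is $v$ itself, an exact solution with the exact boundary data, so the minimizer may coincide with it on a contact set with a nontrivial free boundary, and your assertion that $z^*$ ``satisfies the Euler--Lagrange equation in the complement'' does not by itself yield that $z^*$ solves the mixed problem globally. The paper's route --- (i) uniqueness in $[\underline v,\overline v]$ by sliding, (ii) existence of an absolute minimizer in the convex set $C_v$, which is a genuine solution because $\underline v,\overline v$ are strict sub/supersolutions, hence equals $v$ --- avoids both issues and makes your lattice decomposition, the clamping $w_t=\min(w,v^t)$, and the limit $t\to\infty$ unnecessary. (A minor further point: in your dominated convergence step, the bound $|\nabla v^t|\leq c_s/\lambda$ is \emph{not} square-integrable against $\lambda^{1-2s}$ near $\lambda=0$; you would need the $t$-uniform bounds $|\nabla_x v^t|\leq c_s$ and $|\lambda^{1-2s}\partial_\lambda v^t|\leq c_s$ from Remark \ref{oss-grad} instead.)
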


\begin{proof}
As in the case of the half-Laplacian, this property of local
minimality of monotone solutions $w$ such that $\underline{v}\leq
w\leq \overline{v}$ follows from the following two results:

\begin{enumerate}
\item [i)] Uniqueness of the solution to the problem
\begin{eqnarray}\label{eq2-ball-s}
\begin{cases}
\text{div}(\lambda^a\nabla w)=0& \text{in}\; C_R,\\
w=v& \text{on}\; \partial^+C_R=\partial C_R\cap \{\lambda>0\},\\
- d_s\lambda^a\partial_{ \lambda}w=f(w)& \text{on}\; B_R,\\
\underline{v}\leq w\leq \overline{v}&\text{in}\; C_R.
\end{cases}
\end{eqnarray} Thus, the solution must be $w\equiv v$.
This is the analogue of Lemma 3.1 of \cite{C-SM} for $s=1/2$, and below we comment on its proof. In this fractional case, it is stated in Lemma 5.1 of \cite{C-Si2}.
  \item [ii)] Existence
of an absolute minimizer for \eqref{eq2-ball-s}, that is, for $\mathcal{E}_{s,C_R}$ in the set
$$C_v=\{w\in H^1(C_R,\lambda^a)\,|\,w \equiv
v\:\mbox{on}\:\partial^+C_R,\:\underline{v}\leq w\leq
\overline{v}\;\mbox{ in}\; C_R\}.$$
\end{enumerate}

The statement of the proposition follows from the fact that by i) and ii), the monotone solution $v$, by uniqueness, must agree with
the absolute minimizer in $C_R$.

To prove points i) and ii), we proceed exactly as in \cite{C-Si2}.
For this, it is important that $\underline v$ and $\overline v$ are respectively, a strict subsolution and a strict supersolution of the Dirichlet-Neumann mixed problem \eqref{eq2-ball-s}.
We make a short
comment about these proofs.

\begin{enumerate}
\item [i)] The proof of uniqueness is based on sliding the
function $v(x,\lambda)$ in the direction $x_n$. We set
$$v^t(x_1,...,x_n,\lambda)=v(x_1,...,x_n+t,\lambda)\quad \mbox{for every}\:(x,\lambda)\in \overline{C}_R.$$
Since $v^t\rightarrow \overline{v}$ as $t\rightarrow +\infty$
uniformly in $\overline{C}_R$ and $\underline{v}< w<
\overline{v}$ (here we use that $w$ solves \eqref{eq2-ball-s} and that $\underline v$ is a subsolution to guarantee $\underline v < w$), then $w<v^t$ in $\overline{C}_R$, for $t$ large
enough. We want to prove that $w<v^t$ in $\overline{C}_R$ for
every $t>0$. Suppose that $s>0$ is the infimum of those $t>0$ such
that $w<v^t$ in $\overline{C}_R$. Then by
applying maximum principle and Hopf's lemma we get a contradiction, since one would have $w\leq v^s$ in $\overline C_R$ and $w=v^s$ at some point in $\overline C_R \setminus \partial^+ C_R$.

\item [ii)] To prove the existence of an absolute minimizer for
$\mathcal{E}_{C_R}$ in the convex set $C_v$, we proceed exactly as
in Lemma 4.1 of \cite{C-Si2}, substituting $-1$ and
$+1$ by the subsolutions and supersolution $\underline{v}$ and $\overline{v}$, respectively.
\end{enumerate}
\end{proof}

We give now the proof of the energy estimate in dimension 3 for
monotone solutions without the limit assumptions.

\begin{proof}[Proof of Theorem \ref{energy-dim3-s}]
We follow the proof of Theorem 5.2 of \cite{AAC}. We need to prove that the comparison function $\overline w$, used in the proof of Theorem \ref{energy-est-s}, satisfies $\underline v\leq \overline w\leq \overline v$. Then we can apply Proposition
\ref{monot-min-s} to make the comparison argument with the function $\overline w$ (as for global minimizers).
We recall that $\overline w$ is the solution of
\begin{equation}\label{pb-w-s}\begin{cases}
\mbox{div}(\lambda^{1-2s}\nabla \overline w)=0  & \mbox{in} \;C_R\\
\overline{w}(x,0)=g(x)& \mbox{on}\;B_R\times \{\lambda=0\}\\
\overline{w}(x,\lambda)=v(x,\lambda)& \mbox{on}\;\partial
C_R\cap \{\lambda >0\},\end{cases}\end{equation}
where $g=\tau\eta_R+(1-\eta_R)v(\cdot,0)$ as in \eqref{g-s}. Recall that $\tau$ is such that $G(\tau)=c_u=\min\{G(s):\inf_{\re^n}u\leq s\leq \sup_{\re^n}u\}$. Thus, if we prove that we van take $\tau$ such that $\sup \underline v\leq \tau \leq \inf \overline v$, then $\underline v\leq g\leq \overline v$ and hence $\underline v$ and $\overline v$ are respectively, subsolution and supersolutions of \eqref{pb-w-s}. It follows that $\underline v\leq \overline w\leq \overline v$, as desired.

To show that $\sup \underline v\leq \tau \leq \inf \overline v$, let
$m=\inf u=\inf \underline{u}$ and $M=\sup u=\sup \overline{u}$,
where $\underline{u}$ and $\overline{u}$ are defined in Lemma
\ref{lemma-s}. Set $\widetilde{m}=\sup \underline{u}$ and
$\widetilde{M}=\inf \overline{u}$; obviously $\widetilde{m}$ and
$\widetilde{M}$ belong to $[m,M]$. By Lemma \ref{lemma-s},
$\underline{u}$ and $\overline{u}$ are either constant or monotone
1-D solutions, moreover
\begin{equation}\label{G1-s}
G>G(m)=G(\widetilde{m})\;\;\mbox{ in}\;
(m,\widetilde{m})\end{equation} in case $m<\widetilde{m}$ (i.e.
$\underline u$ not constant), and
\begin{equation}\label{G2-s}G>G(M)=G(\widetilde{M})\;\;\mbox{ in}\;(\widetilde{M},M)\end{equation}
 in case $\widetilde{M}<M$ (i.e.
$\overline u$ not constant).

In all four possible cases (that is, each
$\underline{u}$ and $\overline{u}$ is constant or
one-dimensional), we deduce from (\ref{G1-s}) and (\ref{G2-s}) that
$\widetilde{m}\leq \widetilde{M}$ and that there exists $\tau\in
[\widetilde{m},\widetilde{M}]=[\sup \underline v=\sup \underline u, \inf \overline v=\inf \overline u]$ such that $G(\tau)=c_u$ (recall that
$c_u$ is the infimum of $G$ in the range of $u$), as desired. 
\end{proof}

\section{1-D symmetry in $\re^3$}

To prove Theorem \ref{degiorgi-s} we follow the argument, used by
Ambrosio and the first author \cite{AC} in their proof of the conjecture of De Giorgi
in dimension $n=3$. It relies on a Liouville type theorem. We
recall an adapted version of this result for the fractional case,
given by the first author and Sire (Theorem 4.10 in \cite{C-Si1}).

\begin{teo}\label{liouville-s}(\cite{C-Si1})
Let $a\in (-1,1)$, $\varphi \in
L^{\infty}_{loc}(\overline{\re_+^{n+1}})$ be a positive function and suppose
that $\sigma \in H^1_{loc}(\overline{\re_+^{n+1}}, \lambda^a)$ is a solution of
\begin{equation}\label{liouveq-s}
\begin{cases}
-\sigma{\rm div}(\lambda^a \varphi^2 \nabla\sigma) \leq 0
&\quad \hbox{in } \re^{n+1}_+\\
-\sigma\lambda^a\frac{\partial \sigma}{\partial \lambda } \leq 0 &\quad
\hbox{on }
\partial\re^{n+1}_+
\end{cases}
\end{equation}
in the weak sense.
Moreover assume that for every $R >1$,
\begin{equation}\label{intR2-s}
\int_{C_R} \lambda^a (\varphi \sigma)^2 dx d\lambda \leq C R^2,
\end{equation}
for some constant $C$ independent of $R$.

Then, $\sigma$ is constant.
\end{teo}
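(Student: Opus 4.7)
The plan is to adapt the classical Moser-type argument used by Ambrosio and Cabr\'e in \cite{AC} for the Liouville theorem behind the De Giorgi conjecture in dimension $3$, now weighted by $\lambda^a$. The overall idea is: test the differential inequality \eqref{liouveq-s} against a logarithmic cutoff, use the boundary inequality to eliminate the boundary contribution, apply Cauchy--Schwarz/Young to absorb a half of the gradient term, and finally invoke the $R^2$ growth bound \eqref{intR2-s} to let the cutoff escape to infinity.

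More concretely, let $\eta \in C_c^1(\overline{\re^{n+1}_+})$ be a nonnegative cutoff. Multiplying the bulk inequality by $\eta^2$, integrating over $\re^{n+1}_+$ and integrating by parts (justified by a standard approximation argument, since $\sigma \in H^1_{\rm loc}(\overline{\re^{n+1}_+},\lambda^a)$ and $a\in (-1,1)$ puts $\lambda^a$ in the Muckenhoupt class $A_2$), I obtain
\begin{equation*}
\int_{\re^{n+1}_+} \lambda^a \varphi^2\,\nabla(\eta^2\sigma)\cdot\nabla\sigma\,dx d\lambda
\;+\; \int_{\partial\re^{n+1}_+} \eta^2 \varphi^2\, \sigma \lambda^a\partial_\lambda\sigma\,dx \;\leq\; 0.
\end{equation*}
The boundary inequality in \eqref{liouveq-s} says exactly that the last integrand is nonnegative, so that boundary contribution can be dropped. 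Expanding $\nabla(\eta^2\sigma)=\eta^2\nabla\sigma+2\eta\sigma\nabla\eta$ and applying Young's inequality to absorb the cross term into the gradient term gives the Caccioppoli-type estimate
\begin{equation*}
\int_{\re^{n+1}_+} \eta^2\lambda^a\varphi^2|\nabla\sigma|^2\,dx d\lambda \;\leq\; 4\int_{\re^{n+1}_+}\lambda^a\varphi^2\sigma^2|\nabla\eta|^2\,dx d\lambda.
\end{equation*}

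Next I would choose a logarithmic cutoff depending only on $r=|(x,\lambda)|$: fix $R>1$ large and set $\eta_R\equiv 1$ on $\{r\leq R\}$, $\eta_R\equiv 0$ on $\{r\geq R^2\}$, and $\eta_R(r)=\log(R^2/r)/\log R$ in between, so that $|\nabla\eta_R|\leq C/(r\log R)$. Splitting the annulus $R\leq r\leq R^2$ dyadically and using \eqref{intR2-s} on each shell, one obtains
\begin{equation*}
\int \lambda^a\varphi^2\sigma^2|\nabla\eta_R|^2\,dxd\lambda
\;\leq\; \frac{C}{(\log R)^2}\sum_{k}\frac{1}{(2^k)^2}\int_{C_{2^{k+1}}}\lambda^a(\varphi\sigma)^2
\;\leq\; \frac{C\,\log R}{(\log R)^2}\;=\;\frac{C}{\log R}\;\longrightarrow\;0.
\end{equation*}
Combined with the Caccioppoli estimate, this forces $\lambda^a\varphi^2|\nabla\sigma|^2\equiv 0$ on $\re^{n+1}_+$, and since $\lambda>0$ there and $\varphi>0$ by hypothesis, $\nabla\sigma\equiv 0$ and $\sigma$ is constant.

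The main obstacle I anticipate is purely technical: making the integration by parts rigorous on the weighted Sobolev space $H^1_{\rm loc}(\overline{\re^{n+1}_+},\lambda^a)$ and giving precise meaning to the trace $\lambda^a\partial_\lambda \sigma|_{\partial\re^{n+1}_+}$ when $a$ is close to $\pm 1$. Both issues are handled by truncating to $\{\lambda>\varepsilon\}$, using that $\lambda^a$ is a Muckenhoupt $A_2$-weight (Fabes--Kenig--Serapioni \cite{FKS}) to approximate by smooth functions, and then sending $\varepsilon\downarrow 0$; the boundary inequality in \eqref{liouveq-s} is understood in the corresponding weak (distributional) sense so that it passes to the limit. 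Once these technicalities are dispatched, the dyadic logarithmic-cutoff computation above is entirely parallel to the unweighted $\re^n$ argument in \cite{AC}.
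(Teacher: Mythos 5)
Your proof is correct, but note that the paper itself does not prove this statement: it is quoted verbatim from Cabr\'e--Sire \cite{C-Si1} (Theorem 4.10 there), and your argument is essentially the standard proof of that result, i.e.\ the weighted version of the Berestycki--Caffarelli--Nirenberg/Ambrosio--Cabr\'e Liouville argument. Your first display (Caccioppoli identity with the boundary term of the correct sign) is in fact how ``solution in the weak sense'' of \eqref{liouveq-s} is defined in \cite{C-Si1}, which dissolves the trace issue you flag at the end. The only stylistic difference is the choice of cutoff: the references use the plain cutoff between $C_R$ and $C_{2R}$ together with the two-step Cauchy--Schwarz refinement (first deduce that $\int\lambda^a\varphi^2|\nabla\sigma|^2$ is finite, then that the annular contribution tends to zero), which has the advantage of also covering the borderline growth $CR^2\log R$ of Moschini \cite{mosch}; your logarithmic cutoff handles the stated $CR^2$ hypothesis directly but would only give boundedness, not decay, under $CR^2\log R$.
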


We can now give the proof of our one-dimensional symmetry result.

\begin{proof}[Proof of Theorem \ref{degiorgi-s}]
We follow the
proof of Theorem 1.4 in \cite{C-Cinti1}, where the result was established for $s=1/2$.
Hence, here we may assume $s>1/2$.

First of all observe that
both global minimizers and monotone solutions are stable. Thus, in
both cases (see Lemma 6.1 of \cite{C-Si2}), there exists a H\"older continuous function $\varphi$ in $\overline{\re^{4}_+}$
 such that
$\varphi >0$ in $\overline{\re^{4}_+}$, $\varphi \in H^1_{\rm{loc}}(\overline{\re^{4}_+},\lambda^a)$, and
\begin{equation*}
\begin{cases}
\mbox{ div}(\lambda^a \nabla \varphi) = 0
&\quad \hbox{in } \re^{4}_+\\
-d_s\lambda^a \partial _\lambda \varphi =f'(v)\varphi &\quad
\hbox{on }
\partial\re^{4}_+.
\end{cases}
\end{equation*}
Note that, if $u$ is a monotone solution in the direction $x_3$, say,
then we can choose $\varphi=v_{x_3}$, where $v$ is the
$s$-extension of $u$ in the half space. For $i=1,2,3$ fixed,
consider the function:
$$\sigma_i=\frac{v_{x_i}}{\varphi}.$$

We prove that $\sigma_i$ is
constant in $\re^{4}_+$ using the Liouville type Theorem
\ref{liouville-s} and our energy estimate.

We have that
$$\mbox{div}(\lambda^a\varphi^2\nabla \sigma_i)=0\quad
\mbox{in}\;\;\re^{4}_+.$$ Moreover $-\lambda^a \partial_\lambda \sigma_i$ is zero on $\partial \re^{4}_+$.
Indeed
$$\lambda^a\varphi^2\partial_\lambda\sigma_i=\lambda^a\varphi v_{\lambda
x_i}-\lambda^av_{x_i}\varphi_\lambda=0$$ since both $v_{x_i}$ and
$\varphi$ satisfy the same boundary condition
$$-d_s\lambda^a\partial_{\lambda} v_{x_i} -f'(v)v_{x_1}=0,\;\;\;
-d_s\lambda^a \partial_\lambda \varphi-f'(v)\varphi=0.$$

By Theorems \ref{energy-est-s} and \ref{energy-dim3-s}, $v$ satisfies the energy estimate 
(\ref{energy-s3}). Since $G(u)-c_u\geq 0$ in $\re^3$, we deduce
$$\int_{C_R}\lambda^{1-2s}(\varphi\sigma_i)^2\leq \int_{C_R}\lambda^{1-2s}|\nabla v|^2 \leq CR^2
,\quad \mbox{for every} \;R>2\;\;\mbox{and} \;\;1/2<s<1.$$ Thus, using
Theorem \ref{liouville-s}, we deduce that $\sigma_i$ is constant for
every $i=1,2,3$. We get
$$v_{x_i}=c_i \varphi \quad \mbox{for some constant}\;\;
c_i,\;\;\;\mbox{with}\:\:i=1,2,3.$$ We conclude the proof
observing that if $c_1=c_2=c_3=0$ then $v$ is constant. Otherwise
we have
$$
c_iv_{x_j}-c_jv_{x_i}=0\quad \mbox{for every}\:i\neq j,$$ and we
deduce that $v$ depends only on $\lambda$ and on the variable
parallel to the
vector $(c_1,c_2,c_3)$. Thus $u(x)=v(x,0)$ is 1-D.
\end{proof}

\end{document}